\documentclass[a4paper,10pt]{article}

\def\zibreport{1}
\def\longtitle{Linear Programming using Limited-Precision Oracles}
\def\shortfunding{Ambros Gleixner was supported by the Research Campus MODAL
  \textit{Mathematical Optimization and Data Analysis Laboratories} funded
  by the German Ministry of Education and Research (BMBF grant number 05M14ZAM).}
\def\shortauthors{Ambros Gleixner, Daniel E. Steffy}

\usepackage{geometry}
\usepackage[colorlinks,citecolor=blue,linkcolor=blue,urlcolor=blue,
  pdftitle={\longtitle},
  pdfauthor={\shortauthors},
  pdfsubject={\longtitle}]{hyperref}

\usepackage{amsmath,amsthm}
\usepackage[numbers]{natbib}

\usepackage[ruled,linesnumbered,vlined]{algorithm2e}
\SetAlgoCaptionSeparator{:}

\SetKwInOut{Input}{input}
\SetKwInOut{Output}{output}
\SetKwInOut{Params}{parameters}
\SetKwInOut{Bla}{paramst}
\SetKw{myElse}{else}

\DontPrintSemicolon
 \newcommand{\name}[1]{\mbox{#1}\xspace}

\newcommand{\netlib}{\name{Netlib}}

\newcommand{\qsoptex}{\name{QSopt\_ex}}
\newcommand{\scip}{\name{SCIP}}
\newcommand{\soplex}{\name{SoPlex}}

\newcommand{\soplexrec}{\name{SoPlex$_{\text{rec}}$}}
\newcommand{\soplexfac}{\name{SoPlex$_{\text{fac}}$}}

\newcommand{\gcc}{\name{GCC}}
\newcommand{\eglib}{\name{EGlib}}
\newcommand{\gmp}{\name{GMP}}

\newcommand{\miplib}{\name{MIPLIB}}

\newcommand{\coral}{\name{COR@L}}

\newcommand{\scipdefault}{\name{\textsc{default}}}

 \usepackage{amsmath,amsfonts,amssymb}
\usepackage{amssymb}
\usepackage{mathtools} \usepackage[nice]{nicefrac}

\newcommand{\T}{T}\newcommand{\dash}{---}\renewcommand{\leq}{\leqslant}
\renewcommand{\geq}{\geqslant}
\renewcommand{\phi}{\varphi}
\renewcommand{\epsilon}{\varepsilon}
\renewcommand{\Leftrightarrow}{\xLeftrightarrow{\quad}}
\newcommand{\without}{\setminus}

\newcommand{\order}{\ensuremath{O}}
\newcommand{\size}[1]{\ensuremath{\langle#1\rangle}}
\newcommand{\ceil}[1]{\ensuremath{\lceil#1\rceil}}
\newcommand{\floor}[1]{\ensuremath{\lfloor#1\rfloor}}

\newcommand{\B}{\mathcal{B}}

\newcommand{\Q}{\mathbb{Q}}
\newcommand{\Z}{\mathbb{Z}}
\newcommand{\R}{\mathbb{R}}

\newcommand{\N}{\mathcal{N}}

\newcommand{\nonneg}[1]{\ensuremath{#1_{\ge 0}}\xspace}

\newcommand{\unitmatrix}{I}
\newcommand{\lb}{\ell}

\usepackage{bbm}
\newcommand{\one}{\mathbbm{1}}

\newcommand{\enifed}{\ensuremath{=:}}
\newcommand{\assign}{\mbox{$\,\leftarrow\,$}}
\newcommand{\norm}[1]{\lVert#1\rVert}

\newcommand{\maxnorm}[1]{\norm{#1}_\infty}

\newcommand{\abs}[1]{\lvert#1\rvert}
\newcommand{\bigabs}[1]{\big\lvert#1\big\rvert}
\newcommand{\round}{round\_to\_denom}

\newcommand{\fpdigits}{p}
\newcommand{\fptol}{\eta}
\newcommand{\termtol}{\tau}
\newcommand{\scalpow}{D}
\newcommand{\scalfac}{\Delta}
\newcommand{\fpnumbers}{\mathbb{F}}

\newtheoremstyle{itstyle}  {\bigskipamount}  {\bigskipamount}  {\normalfont\itshape}  {}  {\normalfont}  { }  { }  {\textbf{\thmname{#1}\thmnumber{ #2}}\thmnote{ (#3)}\textbf{.}}
\newtheoremstyle{upstyle}  {\bigskipamount}  {\bigskipamount}  {\normalfont}  {}  {\normalfont\itshape}  { }  { }  {\textbf{\thmname{#1}\thmnumber{ #2}}\thmnote{ (#3)}\textbf{.}}
\theoremstyle{itstyle}
\newtheorem{definition}{Definition}
\newtheorem{lemma}{Lemma}
\newtheorem{theorem}{Theorem}

\theoremstyle{upstyle}
\newtheorem{example}{Example}

\renewenvironment{proof}[1][Proof]{\noindent\textit{#1.} }{\bigskip}
\newenvironment{claim}[1]{\bigskip\noindent\textit{#1.}}{\medskip}

 \usepackage{booktabs}		\usepackage{longtable}
\usepackage{lscape}

\newcommand{\timecol}{\ensuremath{t}}
\newcommand{\timecolto}{\ensuremath{t_{\textup{time}}}}
\newcommand{\numinstcol}{$\#$inst}

\newcommand{\preccol}{prec}
\newcommand{\irrefcol}{$\#$ref}

\newcommand{\itercol}{$\#$iter}

\newcommand{\speedupcol}{\ensuremath{\Delta t}}

\newcommand{\faccol}{$\#$fac}
\newcommand{\factimecol}{\ensuremath{t_{\text{fac}}}}
\newcommand{\reccol}{$\#$rec}
\newcommand{\rectimecol}{\ensuremath{t_{\text{rec}}}}
\newcommand{\dlcmcol}{dlcm}
\newcommand{\tabledefline}[2]{\multicolumn{1}{l}{\rlap{#1\ \dash\ #2}}\\}

\newcommand{\timebracketgeq}[1]{$[#1,7200]$}

\newcommand{\relimprocol}{\%}
\newcommand{\nodecol}{\ensuremath{n}}

\newcommand{\obbtsuccesscol}{\ensuremath{b_{\text{obbt}}}}
\newcommand{\obbtlvbcol}{lvb}

\newcommand{\lvbsuccesscol}{\ensuremath{b_{\text{\text{lvb}}}}}
\newcommand{\obbtitercol}{iter}
\newcommand{\filtitercol}{iter$_{\text{filt}}$}
\newcommand{\obbtlpcol}{lp}
\newcommand{\filtlpcol}{lp$_{\text{filt}}$}

\newcommand{\contfrac}{\hspace*{-1.4ex}\raisebox{-0.6ex}{\tikz{\draw (0ex,0ex) -- ++(1.9ex,0ex) -- ++(0ex,1.15em) -- ++(2ex,0ex);}}\hspace*{-1.4ex}}
\newcommand{\timenodecol}{\hspace*{0.5ex}\timecol\contfrac\nodecol}

\newcommand{\begin{table}[t]
  \caption{}
  \label{tab:}
  \setlength{\tabcolsep}{1pt}

    \scriptsize
  \begin{tabular*}{\textwidth}{@{\extracolsep{\fill}}llrrrrrr}
    \tabledefline{\obbtlpcol}{number of LPs solved by OBBT (shifted geo.\ mean)}
    \tabledefline{\filtlpcol}{number of LPs solved by filtering (not incl.\ in \obbtlpcol, shifted geo.\ mean)}
    \tabledefline{\obbtitercol}{number of LP iterations by OBBT (shifted geo.\ mean)}
    \tabledefline{\filtitercol}{number of LP iterations by filtering (not incl.\ in \obbtitercol, shifted geo.\ mean)}
    \tabledefline{\obbtsuccesscol}{number of bounds tightened by OBBT (shifted geo.\ mean)}
    \tabledefline{\obbtlvbcol}{number of LVBs found by OBBT (shifted geo.\ mean)}[1.5ex]
    \toprule
    Test set & strategy
    & \obbtlpcol & \filtlpcol
    & \obbtitercol & \filtitercol
    & \obbtsuccesscol & \obbtlvbcol\\
    \midrule
    \input{tables/-data}
    \bottomrule
  \end{tabular*}
  \end{table}
}[2]{\begin{table}[t]
  \caption{#2}
  \label{tab:#1}
  \setlength{\tabcolsep}{1pt}

    \scriptsize
  \begin{tabular*}{\textwidth}{@{\extracolsep{\fill}}llrrrrrr}
    \tabledefline{\obbtlpcol}{number of LPs solved by OBBT (shifted geo.\ mean)}
    \tabledefline{\filtlpcol}{number of LPs solved by filtering (not incl.\ in \obbtlpcol, shifted geo.\ mean)}
    \tabledefline{\obbtitercol}{number of LP iterations by OBBT (shifted geo.\ mean)}
    \tabledefline{\filtitercol}{number of LP iterations by filtering (not incl.\ in \obbtitercol, shifted geo.\ mean)}
    \tabledefline{\obbtsuccesscol}{number of bounds tightened by OBBT (shifted geo.\ mean)}
    \tabledefline{\obbtlvbcol}{number of LVBs found by OBBT (shifted geo.\ mean)}[1.5ex]
    \toprule
    Test set & strategy
    & \obbtlpcol & \filtlpcol
    & \obbtitercol & \filtitercol
    & \obbtsuccesscol & \obbtlvbcol\\
    \midrule
    \input{tables/#1-data}
    \bottomrule
  \end{tabular*}
  \end{table}
}

\newcommand{\begin{table}[t]
  \caption{}
  \label{tab:}
  \setlength{\tabcolsep}{1pt}

    \scriptsize
  \begin{tabular*}{\textwidth}{@{\extracolsep{\fill}}llrrrrrrrr}
    \tabledefline{\numinstcol}{number of instances}
    \tabledefline{\timenodecol}{\parbox[t]{0.92\textwidth}{running time of \scip in seconds (first line) and number of branch-and-bound nodes (second line, both shifted geo. mean)}}
    \tabledefline{\relimprocol}{relative time/nodes w.r.t.\ \scip plain (in $\%$)}
    \tabledefline{\obbtsuccesscol}{number of bounds tightened by OBBT (shifted geo.\ mean)}
    \tabledefline{\lvbsuccesscol}{number of bounds tightened by LVB propagation (shifted geo.\ mean)}[1.5ex]
    \toprule
    & &
    & \multicolumn{1}{c}{\scip plain}
    & \multicolumn{3}{c}{\scip{}$+$OBBT}
    & \multicolumn{3}{c}{\scip{}$+$OBBT$+$LVB}\\
    \cmidrule(){4-4}\cmidrule(){5-7}\cmidrule(){8-10}
    Test set & subgroup & \numinstcol
    & \timenodecol
    & \timenodecol & \relimprocol & \obbtsuccesscol
    & \timenodecol & \relimprocol & \lvbsuccesscol\\
    \midrule
    \input{tables/-data}
    \bottomrule
  \end{tabular*}
  \end{table}
}[2]{\begin{table}[t]
  \caption{#2}
  \label{tab:#1}
  \setlength{\tabcolsep}{1pt}

    \scriptsize
  \begin{tabular*}{\textwidth}{@{\extracolsep{\fill}}llrrrrrrrr}
    \tabledefline{\numinstcol}{number of instances}
    \tabledefline{\timenodecol}{\parbox[t]{0.92\textwidth}{running time of \scip in seconds (first line) and number of branch-and-bound nodes (second line, both shifted geo. mean)}}
    \tabledefline{\relimprocol}{relative time/nodes w.r.t.\ \scip plain (in $\%$)}
    \tabledefline{\obbtsuccesscol}{number of bounds tightened by OBBT (shifted geo.\ mean)}
    \tabledefline{\lvbsuccesscol}{number of bounds tightened by LVB propagation (shifted geo.\ mean)}[1.5ex]
    \toprule
    & &
    & \multicolumn{1}{c}{\scip plain}
    & \multicolumn{3}{c}{\scip{}$+$OBBT}
    & \multicolumn{3}{c}{\scip{}$+$OBBT$+$LVB}\\
    \cmidrule(){4-4}\cmidrule(){5-7}\cmidrule(){8-10}
    Test set & subgroup & \numinstcol
    & \timenodecol
    & \timenodecol & \relimprocol & \obbtsuccesscol
    & \timenodecol & \relimprocol & \lvbsuccesscol\\
    \midrule
    \input{tables/#1-data}
    \bottomrule
  \end{tabular*}
  \end{table}
}

\newcommand{\begin{table}[t]
  \caption{}
  \label{tab:branch:}
  \setlength{\tabcolsep}{1pt}

  \scriptsize
  \begin{tabular*}{\textwidth}{@{\extracolsep{\fill}}llrrrrrrrrr}
    \tabledefline{\numinstcol}{total number of instances solved}
    \tabledefline{\nodecol}{number of branch-and-bound nodes (shifted geom.\ mean)}
    \tabledefline{\timecol}{running time (shifted geom.\ mean in seconds)}
    \tabledefline{\relimprocol}{relative number of nodes/time w.r.t.\ \scip default (in $\%$)}
    \tabledefline{\timecolto}{relative time w.r.t.\ \scip default including instances from category ``timeout'' (in $\%$)}[1.5ex]
    \toprule
    &
    & \multicolumn{3}{c}{\scipdefault}
    & \multicolumn{3}{c}{}
    & \multicolumn{3}{c}{\relimprocol}\\
    \cmidrule(){3-5}\cmidrule(){6-8}\cmidrule(){9-11}
    Test set & subgroup
    & \numinstcol & \nodecol & \timecol
    & \numinstcol & \nodecol & \timecol
    & \nodecol & \timecol & \timecolto\\
    \midrule
    \input{tables/branch--data}
    \bottomrule
  \end{tabular*}
\end{table}
}[3]{\begin{table}[t]
  \caption{#3}
  \label{tab:branch:#1}
  \setlength{\tabcolsep}{1pt}

  \scriptsize
  \begin{tabular*}{\textwidth}{@{\extracolsep{\fill}}llrrrrrrrrr}
    \tabledefline{\numinstcol}{total number of instances solved}
    \tabledefline{\nodecol}{number of branch-and-bound nodes (shifted geom.\ mean)}
    \tabledefline{\timecol}{running time (shifted geom.\ mean in seconds)}
    \tabledefline{\relimprocol}{relative number of nodes/time w.r.t.\ \scip default (in $\%$)}
    \tabledefline{\timecolto}{relative time w.r.t.\ \scip default including instances from category ``timeout'' (in $\%$)}[1.5ex]
    \toprule
    &
    & \multicolumn{3}{c}{\scipdefault}
    & \multicolumn{3}{c}{#2}
    & \multicolumn{3}{c}{\relimprocol}\\
    \cmidrule(){3-5}\cmidrule(){6-8}\cmidrule(){9-11}
    Test set & subgroup
    & \numinstcol & \nodecol & \timecol
    & \numinstcol & \nodecol & \timecol
    & \nodecol & \timecol & \timecolto\\
    \midrule
    \input{tables/branch-#1-data}
    \bottomrule
  \end{tabular*}
\end{table}
}
 \usepackage{etex} 
\usepackage{tikz}
\usepackage{pgfplots}

\usetikzlibrary{arrows}

\colorlet{plot1}{cyan!60!black}
\colorlet{plot2}{green!80!black!80!red}
\colorlet{plot3}{brown}
\colorlet{plot4}{purple!70!blue}
\tikzstyle{plot1time} = [draw=plot1, thick]
\tikzstyle{plot1nodes} = [draw=plot1, thick, dashed]
\tikzstyle{plot2time} = [draw=plot2, thick]
\tikzstyle{plot2nodes} = [draw=plot2, thick, dashed]
\tikzstyle{plot3time} = [draw=plot3, thick]
\tikzstyle{plot3nodes} = [draw=plot3, thick, dotted]

\pgfplotsset{histostyle/.style={    width=0.8\textwidth, height=3cm,
    scale only axis,
        xmin=0, xmax=100,
    axis x line=bottom,
    enlarge x limits=0.005,
    every x tick/.append style={draw=black},
    xtick align = inside,
    xtick = {0,20,40,60,80,100},
    xticklabels = {0\%,20\%,40\%,60\%,80\%,100\%},
    scaled ticks = false,
    minor tick num=3,
    minor tick length=3,
    major tick length=3,
    every tick label/.append style={font=\scriptsize},
    every outer x axis line/.style = {},
        ymin=0,
    ylabel={\raisebox{-1.75em}[0em][0em]{\scriptsize instances}},
    every y tick label/.append style={color=white},
    every y tick/.append style={draw=none},
    every axis/.append style={draw=white},
    every node near coord/.append style={black,font=\scriptsize,rotate=90},
    nodes near coords align=right,
}}

\pgfplotsset{histoplot/.style={    ybar,
    bar width=10,
    nodes near coords,
    point meta=y,
    draw=blue!15!white,
    fill=blue!15!white
}}

\pgfplotsset{loghistostyle/.style={    width=0.8\textwidth, height=3cm,
    scale only axis,
        xmin=0, xmax=8,
    axis x line=bottom,
    enlarge x limits=0.005,
    every x tick/.append style={draw=black},
    xtick align = inside,
    xtick = {0,1,2,3,4,5,6,7,8},
    xticklabels = {\raisebox{0ex}[1.8ex]{0},\raisebox{0ex}[1.8ex]{1},\raisebox{0ex}[1.8ex]{10},$10^2$,$10^3$,$10^4$,$10^5$,$10^6$,$10^7$,$10^8$},
    scaled ticks = false,
    minor tick num=0,
    minor tick length=3,
    major tick length=2,
    every tick label/.append style={font=\tiny},
    every outer x axis line/.style = {},
        ymin=0,
    ylabel={\raisebox{-1.75em}[0em][0em]{\scriptsize instances}},
    every y tick label/.append style={color=white},
    every y tick/.append style={draw=none},
    every axis/.append style={draw=white},
    every node near coord/.append style={black,font=\scriptsize,rotate=90},
    nodes near coords align=right,
}}

\pgfplotsset{plotstyle/.style={const plot,
         width = 0.95\textwidth, height = 15em,
         scaled ticks = false,
                  major tick length = 5,
         minor tick length = 3,
         tick label style = {font=\scriptsize},
                  xmin = 0,          axis x line = bottom,
         every outer x axis line/.style = {},
         minor x tick num = 5,
         xtick align = outside,
         xtick = {0,1200,2400,3600,4800,6000,7200},
         enlarge x limits=0.005,
                  ymin = -0.903089987,
         axis y line = left,
         ytick = {-0.602059991,-0.301029996,0,0.301029996,0.602059991,0.903089987},          yticklabels = {0.25,0.5,1,2,4,8},          every outer y axis line/.style = {white},
         every y tick/.style = {white},
         extra y tick style = {grid=none, tick label style={font=\small, anchor=south east}},
         ymajorgrids,
         every axis grid/.append style = {color=darkgray,dotted},
                  no markers,
         area legend, legend columns = 3,
         every axis legend/.append style = { at={(0.5,0.92)}, anchor=south, font=\small, draw=none, rounded corners=2pt}
}}

\pgfplotsset{objdevstyle/.style={         width = \textwidth, height = 15em,
         scaled ticks = false,
                  major tick length = 5,
         minor tick length = 3,
         tick label style = {font=\scriptsize},
                  xmin = 1.03026, xmax = 1.03193,
         axis x line = bottom,
         every outer x axis line/.style = {},
         minor x tick num = 5,
         xtick align = outside,
         xtick = {1.03026,1.031095,1.03193},
         xticklabels = {1.03026,1.031095,1.03193},
         enlarge x limits=0.02,
                  ymin = -20, ymax = 2,
         axis y line = left,
         ytick = {-15,-10,-5,0},          yticklabels = {$10^{-15}$,$10^{-10}$,$10^{-5}$,1.0},          every outer y axis line/.style = {white},
         every y tick/.style = {white},
         extra y tick style = {grid=none, tick label style={font=\small, anchor=south east}},
         ymajorgrids,
         every axis grid/.append style = {color=darkgray,dotted},
                           area legend, legend columns = 3,
         every axis legend/.append style = { at={(0.5,0.92)}, anchor=south, font=\small, draw=none, rounded corners=2pt}
}}

\tikzstyle{fbacpx} = [draw=plot3, thick, mark=*, mark size=0.7, mark options={fill=plot3, solid}]
\tikzstyle{fbalong} = [draw=plot1, thick, mark=*, mark size=0.7, mark options={fill=plot1, solid}] \tikzstyle{fbair} = [draw=plot2, thick, mark=*, mark size=0.7, mark options={fill=plot2, solid}]

\usepackage{tkz-graph}
\colorlet{COLOR0}{red}
\colorlet{COLOR1}{black!0}
\colorlet{COLOR2}{black}
\tikzstyle{ucedge} += [line width=0.01, color=COLOR2]
\tikzstyle{ucnode} += [line width=0.01]
 \newcommand{\myorcidlink}[1]{\,\href{https://orcid.org/#1}{\raisebox{-0.45ex}{\includegraphics[width=1.8ex]{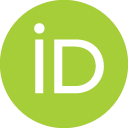}}}}
\newcommand{\doilink}[1]{\href{http://dx.doi.org/#1}{{\textcolor{blue}{DOI:\allowbreak#1}}}}
\newcommand{\myorcid}[1]{ORCID \href{https://orcid.org/#1}{#1}}

\ifthenelse{\zibreport = 1}{
  \let\pdfoutorg\pdfoutput
  \let\pdfoutput\undefined
  \usepackage{zibtitlepage}
  \let\pdfoutput\pdfoutorg

\ZTPAuthor{Ambros Gleixner\protect\myorcidlink{0000-0003-0391-5903} and Daniel E. Steffy\protect\myorcidlink{0000-0002-0370-9555}}
\ZTPTitle{\bf Linear Programming using Limited-Precision Oracles}
\ZTPInfo{The content of this report is also available as publication in the journal {\em Mathematical Programming}.
  Please always cite as:\\[1ex] A.~Gleixner, D.E.~Steffy. Linear Programming using Limited-Precision Oracles.
  {\em Mathematical Programming}, 2019,
  \doilink{10.1007/s10107-019-01444-6}}
\ZTPNumber{19-57}
\ZTPMonth{November}
\ZTPYear{2019}
}{}

\begin{document}

\title{\longtitle}

\author{Ambros Gleixner  \thanks{\itshape Zuse Institute Berlin, Department of Mathematical Optimization,
    Takustr.~7, 14195~Berlin, Germany,
    E-mail \nolinkurl{gleixner@zib.de}, \myorcid{0000-0003-0391-5903}\vspace*{0.5ex}}
  \and
  Daniel E.~Steffy  \thanks{\itshape Mathematics and Statistics, Oakland University, Rochester, Michigan 48309, USA,
    E-mail \nolinkurl{steffy@oakland.edu}, \myorcid{0000-0002-0370-9555}\vspace*{1.5ex}
    \footnoterule\upshape\noindent
    \shortfunding
  }}

\date{November 30, 2019}

\ifthenelse{\zibreport = 1}{\zibtitlepage}{}

\newgeometry{left=38mm,right=38mm,top=35mm}

\maketitle

\paragraph{\bf Abstract}

Since the elimination algorithm of Fourier and Motzkin, many different methods
have been developed for solving linear programs.
When analyzing the time complexity of LP algorithms, it is typically either 
assumed that calculations are performed exactly and bounds are derived on the 
number of elementary arithmetic operations necessary, or the cost of all 
arithmetic operations is considered through a bit-complexity analysis.
Yet in practice, implementations typically use limited-precision arithmetic.
In this paper we introduce the idea of a limited-precision LP oracle and study 
how such an oracle could be used within a larger framework to compute exact 
precision solutions to LPs.
Under mild assumptions, it is shown that a polynomial number of calls
to such an oracle and a polynomial number of bit operations, is 
sufficient to compute an exact solution to an LP.
This work provides a foundation for understanding and analyzing the behavior of 
the methods that are currently most effective in practice for solving LPs 
exactly.

\paragraph{\bf Keywords} Linear programming
  $\cdot$ oracle complexity
  $\cdot$ Diophantine approximation
  $\cdot$ exact solutions
  $\cdot$ symbolic computation
  $\cdot$ rational arithmetic
  $\cdot$ extended-precision arithmetic
  $\cdot$ iterative refinement

\paragraph{\bf Mathematics Subject Classification} 90C05 $\cdot$ 68Q25 $\cdot$ 11K60 $\cdot$ 68W30 $\cdot$ 65G30 $\cdot$ 65F99

\section{Introduction}
\label{sec:intro}

This paper studies algorithms for solving linear programming problems (LPs)
exactly over the rational numbers.
The focus lies on methods that employ a limited-precision LP
oracle---an oracle that is capable of providing approximate primal-dual solutions.
Connections will be made to previous theoretical and practical studies.
We consider linear programs of the following \emph{standard form}:
\begin{equation}\tag{$P$}\label{eq:standardlp}
 \min\{ c^\T x \mid Ax = b,~x \ge \lb \}
\end{equation}
where $A$ is an $m \times n$ matrix of full row rank with $m \leq n$, $x$ is a
vector of variables, $c$ is the objective vector and $\lb$ is a vector of lower 
bounds.
This form of LP is convenient for describing the algorithms, any of which can be 
adapted to handle alternative forms.  
It is assumed that all input data is rational and our goal is to compute exact
solutions over the rational numbers.
We note that although the assumption of rational data is common in the 
literature and holds for most applications, there are some applications where 
irrational data arises naturally (e.g. from geometric structures).
In such cases where irrational data is necessary, many of the algorithms 
described herein may still be of use, for example to compute highly accurate 
approximate solutions.
In the following, we survey relevant background material and 
previous work.

\subsection{Basic Terminology}

We assume that the reader is familiar with fundamental results
related to linear optimization, such as those presented in
\cite{GroetschelLovaszSchrijver1988,Schrijver1986}.
The following notation will be used throughout the paper.
For a matrix $A$, and subsets $J$, $K$ of the rows and columns,
respectively, we use $A_{JK}$ to denote the submatrix of $A$ formed by
taking entries whose row and column indices are in those sets.
When $J$ or $K$ consists of a single element $i$ we will use $i$
in place of $\{i\}$, and `$\cdot$' is used to represent all rows or columns.
The unit matrix of dimension~$n$ is denoted by~$I_n$.

It is well known that if an LP has a bounded objective value
then it has an optimal \emph{basic solution} $x^*$ of the following form.
For a subset $\B$ of $m$ linearly independent columns $A$, set
$x_\B^* := A_{\cdot\B}^{-1} (b - \sum_{i\not\in\B} A_{\cdot i} \lb_i)$
and $x_i^* := \lb_i$ for all $i\notin \B$.
Generally, such a set $\B$ is called a \emph{basis}
and the matrix $A_{\cdot\B}$ is the \emph{basis matrix} associated with $\B$.

We denote the \emph{maximum norm} of a vector~$x\in\R^n$ by
$
  \maxnorm{x} := \max_{i=1,\ldots,n} \abs{x_i}.
$
The corresponding \emph{row sum norm} of a matrix~$A\in\R^{m\times n}$ given by
\begin{equation}
  \maxnorm{A} := \max_{i=1,\ldots,m} \sum_{j=1}^{n}\abs{A_{ij}},
\end{equation}
is compatible with the maximum norm in the sense that
$
  \maxnorm{Ax} \leq \maxnorm{A} \maxnorm{x}
$
for all~$A\in\R^{m\times n}$, $x\in\R^n$.
Furthermore, we define the \emph{encoding length} or \emph{size} of an integer~$n\in\Z$ as
\begin{equation}
  \size{n} := 1 + \ceil{\log(\abs{n} + 1)}.
\end{equation}
Unless otherwise noted, logarithms throughout the paper are base two.
Encoding lengths of other objects are defined as follows.
For a rational number~$p/q$ with $p\in\Z$ and $q\in\nonneg\Z$, $\size{p/q}
:= \size{p} + \size{q}$.
For a vector~$v\in\Q^n$, $\size{v} := \sum_i\size{v_i}$ and for a
matrix~$A\in\Q^{m\times n}$, $\size{A} := \sum_{i,j}\size{A_{ij}}$.
Note that~$\size{v} \geq n$ and~$\size{A} \geq nm$.
To clearly distinguish between the size and the
value of numbers, we will often explicitly use the term \emph{value} when referring
to the numeric value taken by numbers.

\subsection{Approximate and Exact Solutions}

In order to compute exact rational solutions to linear programs, many
algorithms rely on a methodology of first computing sufficiently accurate
approximate solutions and then applying techniques to convert these solutions
to exact rational solutions.
This general technique is not unique to linear programming and has been applied
in other areas such as exact linear algebra.
In this section we will describe some results that make this possible.

First, we consider a result related to the \emph{Diophantine approximation problem},
a problem of determining low-denominator rational approximations $p/q$ of a
given number~$\alpha$.
\begin{theorem}[\cite{Schrijver1986}, Cor.~6.3b]\label{the:diophantine}
  For $\alpha\in\Q$, $M > 0$ there exists at most one rational number~$p/q$
  such that $\abs{p/q - \alpha} < 1/(2Mq)$ and $1 \leq q \leq M$.
  There exists a polynomial-time algorithm to test whether this number exists
  and, if so, to compute this number.
\end{theorem}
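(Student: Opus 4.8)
The plan is to prove both claims via the theory of continued fractions. I would handle uniqueness first, by a short separation argument. Suppose $p_1/q_1 \neq p_2/q_2$ both satisfy $\abs{p_i/q_i - \alpha} < 1/(2Mq_i)$ with $1 \leq q_i \leq M$. Since $p_1 q_2 - p_2 q_1$ is a nonzero integer, $\abs{p_1/q_1 - p_2/q_2} \geq 1/(q_1 q_2)$; on the other hand the triangle inequality gives $\abs{p_1/q_1 - p_2/q_2} < 1/(2Mq_1) + 1/(2Mq_2) \leq 1/(q_1 q_2)$, where the last step uses $q_1, q_2 \leq M$. This contradiction shows at most one such rational exists.

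For the algorithmic part I would write $\alpha = a/b$ in lowest terms with $b \geq 1$, and base everything on the following reduction: any solution $p/q$ — which may be taken in lowest terms, since reducing a fraction only decreases its denominator and therefore preserves both $q \leq M$ and the strict inequality — must be one of the convergents of the continued fraction expansion of $\alpha$. Indeed, from $q \leq M$ we get $\abs{\alpha - p/q} < 1/(2Mq) \leq 1/(2q^2)$, and by Legendre's classical theorem any rational in lowest terms satisfying $\abs{\alpha - p/q} < 1/(2q^2)$ is a convergent $p_k/q_k$ of $\alpha$. Hence it suffices to enumerate the convergents and test the defining condition on each.

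The algorithm is then: run the Euclidean algorithm on $a$ and $b$ to produce the (finite, since $\alpha \in \Q$) continued fraction expansion and the associated convergents $p_0/q_0, p_1/q_1, \dots$; this takes $O(\log b)$ iterations, each of polynomial cost, and each $q_k$ satisfies $q_k \leq b$, so all convergents have size polynomial in $\size{a} + \size{b}$. For every $k$ with $q_k \leq M$, test whether $2M\abs{p_k b - a q_k} < b$, which is exactly $\abs{p_k/q_k - \alpha} < 1/(2Mq_k)$ cleared of denominators. By the uniqueness just proved, at most one $k$ passes this test; output the corresponding $p_k/q_k$ if one does, and report that no such number exists otherwise. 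Every number manipulated has size polynomial in $\size{a} + \size{b} + \size{M}$ and there are $O(\log b)$ steps, so the procedure runs in polynomial time.

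The substantive point, and the one I expect to require the most care, is the reduction to convergents: namely Legendre's theorem that $\abs{\alpha - p/q} < 1/(2q^2)$ forces $p/q$ to be a convergent, together with the standard facts that the continued fraction expansion of a rational $a/b$ terminates after $O(\log b)$ steps and that its convergents have polynomially bounded size. Once these are in hand, the separation argument for uniqueness and the polynomial-time accounting are routine. One could also bypass Legendre's theorem and argue directly from the ``best approximation of the second kind'' property of convergents, but invoking Legendre is the most economical route.
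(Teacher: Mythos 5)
Your proof is correct and follows precisely the route the paper indicates: the paper does not reprove this result but cites Schrijver's Cor.\ 6.3b and remarks only that the proof ``makes use of continued fraction approximations'' and that the algorithm is ``essentially the extended Euclidean algorithm,'' which is exactly the argument you spelled out (uniqueness by the $1/(q_1q_2)$ separation bound, existence/computation by Legendre's criterion reducing to the convergents of $\alpha$). The one detail worth keeping in mind, which you handled correctly, is that the denominator-clearing test $2M\abs{p_k b - a q_k} < b$ requires $M$ to be given as a rational for the comparison to be exact-arithmetic.
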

The proof makes use of continued fraction approximations.
The resulting algorithm, which is essentially the extended Euclidean algorithm, runs in
polynomial time in the encoding length of $\alpha$ and $M$ and is very fast in
practice.
This technique is sometimes referred to as ``rounding'' since for $M=1$ above it
corresponds to rounding to the nearest integer, or as ``numerical rational number
reconstruction'' (see \cite{WangPan2003}).  In the remainder of the paper we will
often refer to the process as simply \emph{rational reconstruction}.

Suppose there is an unknown rational number $p/q$ with $q\leq M$.
If an
approximation $\alpha$ that satisfies $\abs{p/q - \alpha} < 1/(2M^2)$ can be 
computed, then Theorem~\ref{the:diophantine} can be applied to determine
the exact value of $p/q$ in polynomial time.
Since basic solutions of linear programs correspond to solutions of linear
systems of equations, one can always derive an a priori bound on the size of the
denominators of any basic solution using Cramer's rule and the
Hadamard inequality as follows.
\begin{lemma}
  The entries of any basic primal-dual solution of a rational
  LP~\eqref{eq:standardlp} have denominator bounded by
  \begin{equation}\label{equ:hadamardlp}
    H := n^{m/2} L^n \prod_{j=1}^m \maxnorm{A_{j\cdot}}
  \end{equation}
  with~$L$ the least common multiple of the denominators of the entries
  in~$A,b,\lb$, and~$c$.
\end{lemma}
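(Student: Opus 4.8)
The plan is to handle the primal basic solution $x^*$, the associated dual vector $y$ (the solution of $A_{\cdot\B}^\T y = c_\B$), and the reduced costs $z = c - A^\T y$ in turn, and to show that in each case every entry has denominator dividing $H$. The two ingredients are Cramer's rule---which writes the entries of the solution of an integral linear system as ratios of integer determinants, so that their denominators divide the determinant of the coefficient matrix---and the Hadamard inequality applied rowwise, which bounds $\abs{\det A_{\cdot\B}}$. Throughout, $L$ denotes the least common multiple from the statement, and I put $\bar A := LA$, $\bar b := Lb$, $\bar\lb := L\lb$, $\bar c := Lc$, all integral.

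First I would clear denominators. In the primal solution the nonbasic entries $x^*_i = \lb_i$, $i\notin\B$, have denominator dividing $L$. For the basic block, substituting $A = \bar A/L$, $b = \bar b/L$, $\lb = \bar\lb/L$ into $x^*_\B = A_{\cdot\B}^{-1}(b - \sum_{i\notin\B} A_{\cdot i}\lb_i)$ collapses it to $x^*_\B = \tfrac{1}{L}(\bar A_{\cdot\B})^{-1} v$ with the integer vector $v := L\bar b - \sum_{i\notin\B}\bar A_{\cdot i}\bar\lb_i \in \Z^m$; Cramer's rule then shows that the denominator of each $(x^*_\B)_k$ divides $L\,\abs{\det\bar A_{\cdot\B}} = L^{m+1}\abs{\det A_{\cdot\B}}$. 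The same manipulation gives $y = (\bar A_{\cdot\B})^{-\T}\bar c_\B$ with integral right-hand side, so the denominators of $y$ divide $\abs{\det\bar A_{\cdot\B}} = L^m\abs{\det A_{\cdot\B}}$; and since $z_i = 0$ for $i\in\B$ while $z_i = c_i - \sum_j A_{ji} y_j$ for $i\notin\B$, the denominators of $z$ divide $L$ times those of $y$, hence divide $L^{m+1}\abs{\det A_{\cdot\B}}$.

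Next I would bound $\abs{\det A_{\cdot\B}}$. Applying the Hadamard inequality to the $m$ rows of the $m\times m$ matrix $A_{\cdot\B}$, and bounding the Euclidean length of row $j$ by $\sqrt{m}\,\maxnorm{A_{j\cdot}}$ (it has $m$ entries, each of absolute value at most $\maxnorm{A_{j\cdot}}$), yields $\abs{\det A_{\cdot\B}} \leq m^{m/2}\prod_{j=1}^m\maxnorm{A_{j\cdot}}$. Consequently every denominator found above divides $L^{m+1} m^{m/2}\prod_{j=1}^m\maxnorm{A_{j\cdot}}$.

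It remains to match this against $H = n^{m/2} L^n\prod_{j=1}^m\maxnorm{A_{j\cdot}}$, and this last step is the one requiring care. Since $m\leq n$ we have $m^{m/2}\leq n^{m/2}$, so only the exponent of $L$ is at issue. If $m<n$ then $m+1\leq n$, so $L^{m+1}\leq L^n$ and all the bounds divide $H$ directly. If $m=n$ the basis consists of all columns: then $x^*$ has no nonbasic entries (no $\lb$-terms, no $z_i$ to consider) and $x^*_\B = A^{-1}b = (\bar A)^{-1}\bar b$ has denominators dividing only $\abs{\det\bar A} = L^n\abs{\det A}$, so the extra factor of $L$ never appears and again everything divides $H$. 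Thus the only real obstacle is this bookkeeping: the lower-bound shift in the primal and the matrix--vector product in the reduced costs each cost one factor of $L$ beyond the $L^m$ produced by clearing denominators in the basis matrix, and that extra factor is absorbed into $L^n$ precisely because in that situation a nonbasic column must exist.
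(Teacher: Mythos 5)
Your proof is correct, but it takes a genuinely different route from the paper's. You work directly with the $m\times m$ basis matrix $A_{\cdot\B}$, clearing the denominator $L$ by hand in each of the primal, dual, and reduced-cost systems, which forces you to track the factors of $L$ one at a time (arriving at $L^{m+1}$) and then close the gap to $L^n$ with a case split on whether $m<n$ or $m=n$. The paper instead works with the single $n\times n$ block upper-triangular ``row basis'' matrix
\begin{equation*}
\tilde B = \begin{pmatrix} A_{\cdot\B} & A_{\cdot\N} \\ 0 & I_{n-m}\end{pmatrix},
\end{equation*}
for which the primal system is $\tilde B(x_\B;x_\N)=(b;\lb_\N)$ and the dual system is $\tilde B^\T(y;z)=c$. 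Scaling gives the integral matrix $L\tilde B$, Cramer's rule yields denominators dividing $\abs{\det(L\tilde B)}=L^n\abs{\det\tilde B}$ for the entire primal and dual solution vectors in one shot, and applying Hadamard to the rows of $\tilde B$ (the first $m$ of which are full length-$n$ rows of $A$, hence of $\ell_2$-norm at most $\sqrt n\,\maxnorm{A_{j\cdot}}$, and the last $n-m$ are unit vectors) gives $n^{m/2}\prod_j\maxnorm{A_{j\cdot}}$ directly. The paper's framing avoids both the $m=n$ case distinction and the separate accounting for $x_\B$, $x_\N$, $y$, and $z$; your route is more elementary in that it only invokes Hadamard on the genuine basis matrix and, as a side benefit, actually establishes the slightly sharper intermediate bound $L^{m+1}m^{m/2}\prod_j\maxnorm{A_{j\cdot}}$ before relaxing to the stated $H$.
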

\begin{proof}
  Scaling with $L$ transforms \eqref{eq:standardlp} into an LP with identical
  primal-dual solutions $\min\{ (Lc)^\T x \mid (LA)x = Lb,~x \ge
  L\lb \}$.
  Then basic solutions are uniquely determined by linear systems with integral
  coefficient matrix $L\tilde B \in \Z^{n\times n}$, where $\tilde B$ is a square matrix
  constructed from all rows of $A$ plus unit vectors, see
  Equations~\eqref{equ:lpex:primalrowbasis} and \eqref{equ:lpex:dualrowbasis} of
  Section~\ref{subsec:proofs3}.
  By Cramer's rule, the denominators are then a factor of $|\det(L\tilde B)| = L^n |\det(\tilde B)|$.
  Applying Hadamard's inequality to the rows of $\tilde B$ yields $|\det(\tilde B)| \leq
  (\prod_{j=1}^m \norm{A_{j\cdot}}_2) \cdot 1 \cdots 1$.
  With $\norm{A_{j\cdot}}_2 \leq \sqrt{n} \maxnorm{A_{j\cdot}}$ we obtain the desired result $|\det(L\tilde B)|
  \leq n^{m/2} L^n \prod_{j=1}^m \maxnorm{A_{j\cdot}}$.
  \qed
\end{proof}

Therefore, upon computing an approximation of an optimal basic solution vector
where each component is within $1/(2H^2)$ of the exact value, we may apply
Theorem~\ref{the:diophantine} componentwise to recover the exact solution
vector. 

A different technique for reconstructing rational vectors, one that is
computationally more expensive but works under milder
assumptions, is based on polynomial-time \emph{lattice reduction} algorithms as
pioneered by~\cite{LenstraLenstraLovasz1982} and recently improved
by~\cite{NguyenStehle2009}.
It rests on the following theorem.

\begin{theorem}[\cite{LenstraLenstraLovasz1982}, Prop.~1.39]\label{the:simdiophantine}
  There exists a polynomial-time algorithm that, given positive integers~$n,M$
  and~$\alpha\in\Q^n$, finds integers~$p_1,\ldots,p_n,q$ for which
  \begin{align*}
    \abs{p_i/q - \alpha_i} &\leq 1/(Mq) \text{ for } i = 1,\ldots,n,\\
    1 \leq q &\leq 2^{n(n+1)/4} M^n.
  \end{align*}
\end{theorem}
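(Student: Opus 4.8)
The plan is to reduce the problem to finding a short vector in a suitably scaled lattice, and then invoke the LLL basis reduction algorithm. Write $\alpha_i = a_i/b$ with a common denominator $b \in \nonneg\Z$ and integers $a_i$; let $N$ be a large integer parameter to be fixed at the end (it will be a power of two, roughly $N \approx 2^{n/4} M$, chosen so that the two desired inequalities come out with the right constants). First I would define the lattice $\Lambda \subseteq \Z^{n+1}$ generated by the columns of the matrix
\begin{equation*}
  B = \begin{pmatrix}
    1 & 0 & \cdots & 0 & 0\\
    0 & 1 & \cdots & 0 & 0\\
    \vdots & & \ddots & & \vdots\\
    0 & 0 & \cdots & 1 & 0\\
    -N\alpha_1 & -N\alpha_2 & \cdots & -N\alpha_n & 1/(2N^n)
  \end{pmatrix},
\end{equation*}
or, to keep everything integral, its scaled-up version; the determinant of $\Lambda$ is $1/(2N^n)$ (equivalently, after clearing denominators, a controlled integer). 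A generic lattice point is $(p_1 - q\alpha_1 N^{?}, \ldots)$—more precisely, applying $B$ to an integer vector $(p_1,\ldots,p_n,q)^\T$ yields the vector whose first $n$ coordinates are $p_i - qN\alpha_i$ and whose last coordinate is $q/(2N^n)$.

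Next I would run the LLL algorithm on $B$ to obtain a reduced basis, and take $v$ to be the first vector of that basis. The standard LLL guarantee is that $\norm{v}_2 \leq 2^{(n+1-1)/4} \det(\Lambda)^{1/(n+1)}$ (dimension $n+1$), and since $\det(\Lambda) = 1/(2N^n)$ this gives $\norm{v}_2 \leq 2^{n/4} \cdot (2N^n)^{-1/(n+1)}$. Writing $v = B(p_1,\ldots,p_n,q)^\T$, the bound on $\norm{v}_2$ controls each coordinate: from the last coordinate $|q|/(2N^n) \leq \norm{v}_2$ we get an upper bound on $|q|$, and from the $i$-th coordinate $|p_i - qN\alpha_i| \leq \norm{v}_2$ we get, after dividing by $N|q|$, the bound $|p_i/q - \alpha_i| \leq \norm{v}_2/(N|q|)$. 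Substituting the LLL estimate and simplifying, both required inequalities—$|p_i/q - \alpha_i| \leq 1/(Mq)$ and $1 \leq q \leq 2^{n(n+1)/4}M^n$—follow by choosing $N$ as the appropriate power of two near $2^{n/4}M$; I would also need to check that $q \neq 0$ (if $q = 0$ then $v$ has an integer vector in its first $n$ coordinates of norm $\leq \norm{v}_2 < 1$ by the choice of $N$, forcing $v = 0$, contradicting that $v$ is a basis vector), and replace $q$ by $|q|$ and $p_i$ by $-p_i$ if $q$ came out negative. The whole computation—forming $B$, clearing denominators, running LLL—is polynomial in $n$, $\size{M}$, and $\size{\alpha}$ because LLL is polynomial in the bit-length of its input and the entries of $B$ have size polynomial in these quantities.

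The main obstacle is bookkeeping the constants: one must choose the scaling parameter $N$ (and the integer scaling used to make $B$ have integer entries, which shifts all coordinates but not the final inequalities) so that the generic LLL bound $2^{(\dim-1)/4}\det^{1/\dim}$ delivers exactly the stated $1/(Mq)$ on the left and exactly the stated $2^{n(n+1)/4}M^n$ on the right, simultaneously, with no slack lost. This is a routine but fiddly optimization over $N$; once the inequality $\norm{v}_2 \leq 2^{n/4}(2N^n)^{-1/(n+1)}$ is in hand, matching it against the two targets pins down $N$ essentially uniquely, and the rest is algebra. I would also remark that the exponent $n(n+1)/4$ in the theorem is precisely $\sum_{j=1}^n j/2$, which is the telltale signature of an LLL argument in dimension $n+1$, confirming this is the intended route.
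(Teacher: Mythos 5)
Your plan---build a rank-$(n+1)$ lattice whose short vectors encode simultaneous approximations, run LLL, and read $(p,q)$ from the first reduced basis vector---is exactly the argument behind the cited Proposition~1.39, and recognizing $n(n+1)/4 = \sum_{j=1}^n j/2$ as the LLL signature in dimension $n+1$ is the right instinct. However, the specific lattice you propose breaks the parameter matching you defer as ``routine but fiddly.'' With lattice points $(p_1 - qN\alpha_1, \ldots, p_n - qN\alpha_n, q/(2N^n))$, the quantity approximating $\alpha_i$ is $p_i/(Nq)$, so the output denominator must be $Nq$; the approximation guarantee then requires $2N^n \geq 2^{n(n+1)/4}M^{n+1}$ (so the LLL bound $2^{n/4}(2N^n)^{-1/(n+1)}$ drops to $1/M$), while the cap on the output denominator requires $2N^{n+1} \leq 2^{n(n+1)/4}M^{n+1}$, and together these force $N \leq 1$, which fails already for $n \geq 1$, $M \geq 2$. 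The step where the error enters is ``$\abs{p_i - qN\alpha_i} \leq \norm{v}_2$ gives $\abs{p_i/q - \alpha_i} \leq \norm{v}_2/(N\abs{q})$'': the left-hand side should read $\abs{p_i/(Nq) - \alpha_i}$, and with that corrected the bookkeeping has no valid solution.

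The fix is to leave the $\alpha$-coordinates unscaled. Use the lattice generated by the columns of
\begin{equation*}
  \begin{pmatrix} I_n & -\alpha \\ 0 & C \end{pmatrix}, \qquad C := 2^{-n(n+1)/4}M^{-(n+1)},
\end{equation*}
so that a lattice point is $(p_1 - q\alpha_1, \ldots, p_n - q\alpha_n, Cq)$ with determinant $C$. LLL returns a nonzero $v$ with $\norm{v}_2 \leq 2^{n/4}C^{1/(n+1)} = 1/M$; the first $n$ coordinates give $\abs{p_i - q\alpha_i} \leq 1/M$ and hence $\abs{p_i/q - \alpha_i} \leq 1/(Mq)$, and the last gives $C\abs{q} \leq 1/M$, i.e.\ $\abs{q} \leq 2^{n(n+1)/4}M^n$---nothing is left to tune. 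Two smaller points: the displayed matrix in your write-up places the $-N\alpha_i$ in the bottom \emph{row} rather than the last \emph{column}, contradicting your own description of a lattice point; and the $q \neq 0$ argument needs $\norm{v}_2 < 1$ strictly, which holds only when $M \geq 2$, so $M = 1$ should be handled separately by rounding each $\alpha_i$ to the nearest integer with $q = 1$.
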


Note that in contrast to Theorem~\ref{the:diophantine}, the above can be used to
recover the entire solution vector at once, instead of componentwise;
this process is often referred to as \emph{simultaneous Diophantine approximation}.
This has prominently been applied by~\cite{GroetschelLovaszSchrijver1988} in
the following fashion.
Let the \emph{facet complexity} of a polyhedron~$P$ be the smallest 
number~$\phi \geq n$ such that $P$ is defined by a list of inequalities 
each having encoding length at most~$\phi$.

\begin{lemma}[\cite{GroetschelLovaszSchrijver1988}, Lem.~6.2.9]\label{the:simrounding}
  Suppose we are given a polyhedron~$P$ in~$\R^n$ with facet complexity at
  most~$\phi$ and a point~$\alpha\in\R^n$ within Euclidean distance at
  most~$2^{-6n\phi}$ of~$P$.
  If~$p \in \Z^n$ and $q \in \Z$ satisfy
  \begin{align}\label{equ:simrounding}
  \begin{aligned}
    \norm{p - q \alpha}_2 &\leq 2^{-3\phi} \text{ and}\\
    1 \leq q &< 2^{4n\phi}
  \end{aligned}
  \end{align}
  then~$\tfrac{1}{q} p$ is in~$P$.
\end{lemma}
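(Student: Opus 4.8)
The plan is to verify the conclusion $\tfrac{1}{q}p \in P$ by checking that $\tfrac{1}{q}p$ satisfies every defining inequality of~$P$ separately, exploiting that the slack of $\tfrac1q p$ in such an inequality is a rational number whose denominator is controlled by~$\phi$, and that the two smallness hypotheses force this almost‑integral slack strictly below its smallest possible positive value, hence to be nonpositive. First I would unwind the definition of facet complexity: since $P$ has facet complexity at most~$\phi$, write $P = \{x\in\R^n : a^\T x \leq \beta \text{ for all } (a,\beta)\in\mathcal I\}$ with $\size{a} + \size{\beta} \leq \phi$ for each $(a,\beta)\in\mathcal I$. From this encoding bound I would extract two quantitative facts. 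First, $\norm{a}_2 \leq 2^{\phi}$, since $\norm{a}_2$ is at most the sum of the absolute values of the numerators of the entries of~$a$ (each denominator is $\geq 1$), and that sum is bounded by $2^{\phi}$ because those numerators' encoding lengths all lie inside the budget~$\phi$. Second, if $d$ denotes the least common multiple of the denominators of the entries of~$a$ and of~$\beta$, then $d < 2^{\phi-1}$, again because each such denominator contributes its (at least unit) encoding length to the budget~$\phi$; consequently $a^\T p - \beta q$ is a rational number whose denominator divides~$d$, so it is either nonpositive or at least $1/d > 2^{1-\phi}$.

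Next I would fix one inequality $(a,\beta)\in\mathcal I$ and estimate its slack via the decomposition
\[
  a^\T p - \beta q \;=\; a^\T(p - q\alpha) \;+\; q\,(a^\T\alpha - \beta).
\]
By Cauchy--Schwarz and the hypothesis $\norm{p-q\alpha}_2 \leq 2^{-3\phi}$, the first term is at most $\norm{a}_2\,\norm{p-q\alpha}_2 \leq 2^{\phi}2^{-3\phi} = 2^{-2\phi}$. For the second term, choose $x_0\in P$ with $\norm{x_0-\alpha}_2 \leq 2^{-6n\phi}$ (such a point exists because $\alpha$ lies within that Euclidean distance of~$P$); then $a^\T x_0 \leq \beta$ gives $a^\T\alpha - \beta \leq a^\T(\alpha - x_0) \leq \norm{a}_2\,2^{-6n\phi} \leq 2^{\phi-6n\phi}$, and multiplying by $q < 2^{4n\phi}$ yields $q\,(a^\T\alpha - \beta) < 2^{\phi-2n\phi} \leq 2^{-\phi}$, the last step using $n\geq 1$. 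Adding the two bounds, $a^\T p - \beta q < 2^{-2\phi} + 2^{-\phi} < 2^{1-\phi} < 1/d$. Since $a^\T p - \beta q$ is a rational with denominator dividing~$d$ and lies strictly below $1/d$, it cannot be positive, so $a^\T(\tfrac1q p) \leq \beta$. As $(a,\beta)\in\mathcal I$ was arbitrary, $\tfrac1q p \in P$.

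The argument is short; the part requiring care is the encoding‑length bookkeeping in the setup step, namely confirming that $\norm{a}_2$ and the common denominator~$d$ are bounded by~$2^{\phi}$ and~$2^{\phi-1}$ with enough slack that the specific exponents $3\phi$, $6n\phi$, $4n\phi$ in the hypotheses make the final chain $2^{-2\phi} + 2^{\phi-2n\phi} < 1/d$ go through. The factor $2^{-6n\phi}$ on the distance (rather than just $2^{-3\phi}$) is precisely what survives multiplication by $q < 2^{4n\phi}$, and the normalization $\phi \geq n$ built into the definition of facet complexity is what guarantees $\phi - 2n\phi \leq -\phi$. Finally I would dispatch the degenerate cases separately: $P = \emptyset$, where the distance hypothesis is vacuous, and $P = \R^n$, where the conclusion is trivial.
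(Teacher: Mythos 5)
The paper states this lemma only as a citation to Gr\"otschel, Lov\'asz, Schrijver (Lemma~6.2.9) and does not reproduce a proof, so there is no in-paper argument to compare against. Your proof is correct and follows the standard route one finds in GLS: verify each facet-defining inequality $a^\T x \le \beta$ separately, observe that $a^\T p - \beta q$ has denominator dividing the common denominator $d$ of the data, bound $d$ and $\norm{a}_2$ by powers of $2$ via the encoding-length budget $\size{a}+\size{\beta}\le\phi$, split $a^\T p - \beta q = a^\T(p-q\alpha) + q(a^\T\alpha-\beta)$, control the first term by Cauchy--Schwarz and the second through a nearby point $x_0\in P$, and conclude that a nonnegative rational with denominator dividing $d$ that is strictly below $1/d$ must be $0$. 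The arithmetic goes through with comfortable slack ($d \le 2^{\phi-2n-2}$ and $\norm{a}_2 \le 2^{\phi-2n}$ are actually available). One small imprecision: what the final step $2^{\phi-2n\phi}\le 2^{-\phi}$ really uses is $n\ge 1$, not the stronger normalization $\phi\ge n$ from the definition of facet complexity; that normalization is not actually invoked anywhere in your bounds, though it does no harm. The degenerate cases $P=\emptyset$ (hypothesis vacuous) and an empty inequality list (conclusion trivial) are dispatched correctly.
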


A solution~$\tfrac{1}{q} p$ satisfying~\eqref{equ:simrounding} can be computed
in polynomial time by the lattice reduction algorithm behind
Theorem~\ref{the:simdiophantine}.
Current lattice reduction algorithms are, despite being polynomial-time, generally
slower than the continued-fraction based techniques discussed above.
When applying Theorem~\ref{the:diophantine} componentwise to reconstruct a vector
there exist heuristic methods to accelerate this process by taking advantage of 
the fact that denominators of the component vectors often share common factors~\cite{ChenStorjohann2005,CookSteffy2011}.
We now provide an overview of algorithms for computing exact solutions to 
rational linear programs.

\subsection{Methods for Exact Linear Programming}

Exact polynomial-time algorithms for solving rational LPs based on the ellipsoid
method of Khachiyan \cite{Khachiyan1980} are described
in Gr\"otschel et al.~\cite{GroetschelLovaszSchrijver1988}.
There, a clear distinction is made between the \emph{weak problem} of finding approximate 
solutions and the \emph{strong problem} of finding exact solutions.
The ellipsoid method produces smaller and smaller ellipsoids enclosing an optimal
solution such that eventually simultaneous Diophantine approximation can be
applied to recover an exact rational solution from the center of the ellipsoid.
The same methods could equally be applied in the context of interior point
algorithms in order to convert an approximate, sufficiently advanced solution
along the central path to an exact rational solution.
However, the original algorithm of Karmarkar~\cite{Karmarkar1984} moves from the
approximate solution to a nearby basis solution that matches as closely as
possible and checks this for optimality.
The variant of Renegar~\cite{Renegar1988} instead identifies an optimal face of the
feasible region from approximately tight inequalities and performs a projection
step via solving a system of linear (normal) equations.
The paper includes a detailed analysis and discussion of these ideas and also references
the method developed by Edmonds~\cite{Edmonds1967} for solving linear systems of
equations in polynomial time.
(Note that if one is not careful, Gaussian elimination may require an exponential number
of bit operations.)
While these methods exhibit polynomial worst-case complexity, the high levels of
precision required may be too high to use in any practical setting.
This is illustrated by the following example.

\begin{example}
Consider the following small and unremarkable LP.
\begin{align*}
 ~~\max ~2x_1& +3x_2   +2x_3+ ~x_4+ 2x_5- ~x_6\\
           s.t.~~ x_1& + ~x_2~ + 2x_3+ 3x_4  + ~x_5   ~~ ~ \qquad         \leq 3\\
                     x_1& - ~x_2    \quad \qquad      + ~x_4+3 x_5 - 2x_6 \leq 2\\
                     x_1& + 2x_2 + ~x_3+ 3x_4\qquad \quad   +~x_6\leq 4\\
                     x_1&, x_2, x_3, x_4, x_5, x_6 \geq 0
\end{align*}
Theorem 6.3.2 of Gr\"otschel et al.~\cite{GroetschelLovaszSchrijver1988} says that
an exact rational solution to an LP can be found by first calling a weak optimization
oracle to find an approximate solution to the LP, and then apply simultaneous
Diophantine approximation to recover the exact rational solution.
The tolerance indicated for this purpose is given as
$\epsilon = \frac{2^{-18n\langle c \rangle - 24n^4 \phi}}{\|c\|_\infty}$, where $\phi$ is the
facet complexity of the feasible region and $c$ is the objective vector.
For the above problem, this works out to $\epsilon \approx 10^{-169,059}$.
In comparison, double-precision arithmetic only stores about 16 significant decimal digits.
For any problem of real practical interest, $\epsilon$ will be even smaller.
This underlines that---despite the
fact that the $\epsilon$ is suitable for establishing polynomial running time of algorithms---
it may be far beyond what is feasible for real computations in practice.

By contrast, the largest encoding length of any vertex of the above example is merely 27 and the largest
denominator across all vertices is 8.
Thus, a solution whose componentwise difference from a vertex was under 1/128 would
be sufficient to apply Theorem~\ref{the:diophantine} componentwise to recover the vertex.
\end{example}

Also in general, many LPs of practical interest are highly sparse and may have other special characteristics
that result in their solutions having encoding length dramatically smaller than the value of
derivable worst-case bounds on these values.
Therefore it is of interest to work with what are often known as \emph{output sensitive} algorithms;
where the running time on a problem instance depends not only on the input length, but also on the
size of the output.
Many of the algorithms described in the remaining literature review, and the results derived in this paper,
can be thought of in this context as they often have the chance to find an exact solution and terminate
earlier than a worst-case bound might suggest.

The remainder of this subsection focuses on methods used in practice for
computing exact rational solutions to linear programs.
We first note that many of these practical methods are based on the simplex
method.
There is a trivial method of solving LPs with rational input data exactly,
which is to apply a simplex algorithm and perform all computations in (exact) 
rational arithmetic.
Espinoza~\cite{Espinoza2006} observed computationally that the running time
of this na\"ive approach depends heavily on the encoding length of the rational
numbers encountered during intermediate computations and is much slower than
floating-point simplex implementations.

Edmonds noted that the inverse of a basis matrix can be represented as matrix of integer
coefficients divided by a common denominator and that this representation can be
efficiently updated when pivoting from basis to basis;
this method is referred to as the $Q$-method and is further developed
by~\cite{EdmondsMaurras1997} and~\cite{AzulayPique1998}.
Compared to computing a basis inverse with rational coefficients, the $Q$-method
avoids the repeated GCD computations required by exact rational arithmetic.
Escobedo and Moreno-Centeno~\cite{EscobedoMorenoCenteno2015,EscobedoMorenoCenteno2017}
have applied similar ideas to achieve roundoff-error free matrix factorizations and updates.

The most successful methods in practice for solving LPs exactly over the rational
numbers rely on combining floating-point and exact arithmetic in various ways.
The key idea is to use the the inexact computation in ways that can provide
useful information, but that exact computation is used for critical decisions
or to validate final solutions.
For example, G\"artner~\cite{Gaertner1999} uses floating-point arithmetic for some parts
of the simplex algorithm, such as pricing, but uses exact arithmetic and ideas from
Edmonds' $Q$-method when computing the solutions.
Another way to combine inexact and exact computation relies on the observation
that floating-point solvers often return optimal bases; since the basis provides
a structural description of the corresponding solution, it can be recomputed
using exact arithmetic and checked for optimality via LP duality
\cite{DhiflaouiEtAl2003,Koch2004,Kwappik1998}.
This approach was systematized by Applegate et al.~\cite{ApplegateCookDashEspinoza2007},
in the \qsoptex solver which utilizes increasing levels of arithmetic precision
until an optimal basis is found, and its rational solution computed and verified.
This approach of \emph{incremental precision boosting} is often very effective at
finding exact solutions quickly, particularly when the initial double-precision
LP subroutines are able to find an optimal LP basis, but becomes slower in cases
where many extended-precision simplex pivots are used.
In related work, Cheung and Cucker~\cite{CheungCucker2006} have developed and analyzed
exact LP algorithms that adopt variable precision strategies.
A recent algorithm that has proven most effective in practice for computing high-accuracy
and exact solutions to linear programs is \emph{iterative refinement} for linear
programming~\cite{GleixnerSteffyWolter2016}.
It will be described in detail in
Section~\ref{sec:ir} and serves as the basis for much of the work in this paper.

\subsection{Contribution and Organization of the Paper}

This paper explores the question of how LP oracles based on limited-precision
arithmetic can be used to design algorithms with polynomial running
time guarantees in order to compute exact solutions for linear programs with
rational input data.
In contrast to classic methods from the literature that rely on ellipsoid or
interior point methods executed with limited, but high levels of
extended-precision arithmetic, our focus is more practical, on oracles with low
levels of precision as used by standard floating-point solver implementations.
Section~\ref{sec:ir} formalizes this notion of limited-precision LP oracles and
revises the iterative LP refinement method from~\cite{GleixnerSteffyWolter2016}
in order to guarantee polynomial bounds on the encoding length of the numbers
encountered.
Sections~\ref{sec:oracle} and \ref{sec:ratrecon} present two methodologically
different extensions in order to construct exact solutions---basis verification
and rational reconstruction.
For both methods oracle-polynomial running time is established; for the latter,
we bound the running time by the encoding length of the final solution, which
renders it an output-sensitive algorithm.
Some of the more technical proofs are collected in Section~\ref{sec:proofs}.
Section~\ref{sec:lpex:experiments} analyzes the properties of both methods
computationally on a large test set of linear programs from public sources and
compares their performance to the incremental precision boosting algorithm
implemented in \qsoptex.
The code base used for the experiments is freely and publicly available for
research.
Concluding remarks are given in the final Section~\ref{sec:lpex:conclusion}.
 \section{Convergence Properties of Iterative Refinement}
\label{sec:ir}

Our starting point is the iterative refinement method proposed in
\cite{GleixnerSteffyWolter2016}, which uses calls to a limited-precision LP
solver in order to generate a sequence of increasingly accurate solutions.
Its only assumption is that the underlying LP oracle returns solutions with
absolute violations of the optimality conditions being bounded by a constant
smaller than one.
In the following we give a precise definition of a limited-precision LP oracle.
This formal notion is necessary to evaluate the behavior of the algorithms
defined in this paper, in particular to show that the number of oracle calls,
the size of the numbers encountered in the intermediate calculations, and the
time required for intermediate calculations are all polynomial in the encoding
length of the input.
It is also helpful to introduce the set~$\fpnumbers(\fpdigits) := \{ n / 2^\fpdigits \in
\Q : n\in\Z, \abs{n} \leq 2^{2\fpdigits} \}$ for some fixed~$p \in \mathbb{N}$;
this can be viewed as a superset of floating-point numbers, that is easier 
to handle in the subsequent proofs.
Standard IEEE-754 double-precision floating-point numbers, for example, are all
contained in~$\fpnumbers(1074)$.

\begin{definition}\label{def:oracle} We call an oracle a \emph{limited-precision LP oracle} if there exist constants~$\fpdigits \in \mathbb{N}$, $0 < \fptol < 1$, and $\sigma > 0$
such that for any LP
\begin{align}\tag{$P$}\label{prob:lpeq}
  \min\{ c^\T x : Ax = b, x \geq \lb \}
\end{align}
with~$A\in\Q^{m\times n}$, $b\in\Q^m$, and $c,\lb\in\Q^n$, the oracle either reports a ``failure'' or returns an approximate primal--dual
solution~$\bar{x}\in\fpnumbers(\fpdigits)^n$, $\bar{y}\in\fpnumbers(\fpdigits)^m$ that satisfies\begin{subequations}
\begin{eqnarray}
  & \norm{A\bar x - b}_\infty \leq \fptol,\\
  & \bar x \geq \lb - \fptol\one,\\
  & c - A^\T \bar y \geq -\fptol\one,\\
  & \abs{(\bar{x} - \lb)^\T (c - A^\T \bar{y})} \leq \sigma,
\end{eqnarray}
\end{subequations}
when it is given the LP
\begin{align}\tag{$\bar P$}
  \min\{ \bar c^\T x : \bar Ax = \bar b, x \geq \bar\lb \}
\end{align}
where~$\bar A \in\Q^{m\times n}$, $\bar c, \bar \lb\in\Q^n$, and $\bar b\in\Q^m$
are $A,c,\lb$, and~$b$ with all numbers rounded to~$\fpnumbers(\fpdigits)$.
We call the oracle a \emph{limited-precision LP-basis oracle} if it additionally returns
a basis $\B \subseteq \{1,\ldots,n\}$ satisfying
\begin{subequations}
\begin{eqnarray}
  & \abs{\bar x_i - \lb_i} \leq \fptol \text{ for all } i\not\in\B,\label{def:oracle:primalbasis}\\
  & \abs{c_i - \bar y^\T A_{\cdot i}} \leq \fptol \text{ for all } i\in\B,\label{def:oracle:dualbasis}
\end{eqnarray}
\end{subequations}
\end{definition}

Relating this definition with the behavior of real-world limited-precision LP solvers,
we note that real-world solvers are certainly not guaranteed to find a solution with residual
errors bounded by a fixed constant.\footnote{  Otherwise, as noted in~\cite{GleixnerSteffyWolter2016}, an oracle that guarantees a
  solution with residual errors bounded by a fixed constant could be queried for solutions
  of arbitrarily high precision in a single call by first scaling the problem data by a
  large constant. As we will see, this type of manipulation is not used by our algorithms. 
  Moreover, the requirement in the definition of the oracle that the input data entries
  are in the bounded set $\fpnumbers(\fpdigits)$ would forbid the input of arbitrarily scaled LPs.}
However, these errors could nonetheless be computed
and checked, correctly identifying the case of ``failure''.
Algorithm~\ref{algo:sac} states the basic iterative refinement procedure
introduced in \cite{GleixnerSteffyWolter2016}.
For clarity of presentation, in contrast to \cite{GleixnerSteffyWolter2016},
Algorithm~\ref{algo:sac} uses equal primal and dual scaling factors and tracks
the maximum violation of primal feasibility, dual feasibility, and complementary
slackness in the single parameter~$\delta_k$.
The basic convergence result, restated here as Lemma~\ref{lem:convrate}, carries over from \cite{GleixnerSteffyWolter2016}.

\begin{lemma}\label{lem:convrate} Given an LP of form \eqref{prob:lpeq} and a limited-precision LP oracle with
constants~$\fptol$ and $\sigma$, let~$x_k,y_k,\scalfac_{k}$,
$k=1,2,\ldots$, be the sequences of primal--dual solutions and scaling factors
produced by Algorithm~\ref{algo:sac} with incremental scaling limit~$\alpha \geq 2$. Let~$\epsilon := \max\{\fptol, 1/\alpha\}$.
Then for all iterations~$k$, $\scalfac_{k+1} \geq \scalfac_{k} / \epsilon$, and
\begin{subequations}
\begin{eqnarray}
  &  \norm{Ax_k - b}_\infty \leq \epsilon^k,\label{point1}\\
  &  x_k - \lb \geq -\epsilon^k\one,\label{point2}\\
  &  c - A^\T y_k \geq -\epsilon^k\one,\label{point3}\\
  &  \abs{(x_k - \lb)^\T (c - A^\T y_k)} \leq \sigma\epsilon^{2(k-1)}.
\end{eqnarray}
\end{subequations}
Hence, for any $\termtol > 0$, Algorithm~\ref{algo:sac} terminates in finite time after at most
\begin{equation}\label{lem:convrate:maxiters}
  \ceil{\max\{ \log(\termtol) / \log(\epsilon),
  \log(\termtol\epsilon/\sigma) / \log(\epsilon^2) \}}
\end{equation}
calls to the limited-precision LP oracle.
\end{lemma}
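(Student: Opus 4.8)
The plan is to establish the four displayed bounds together with $\scalfac_{k+1} \geq \scalfac_{k}/\epsilon$ simultaneously by induction on the iteration counter~$k$, following the convergence analysis of~\cite{GleixnerSteffyWolter2016}; the iteration estimate~\eqref{lem:convrate:maxiters} then drops out by a short calculation. The base case $k = 1$ is immediate: Algorithm~\ref{algo:sac} makes its first call by handing the oracle the LP~\eqref{prob:lpeq} itself (with scaling factor $\scalfac_1 = 1$), so Definition~\ref{def:oracle} directly gives $\maxnorm{Ax_1 - b} \leq \fptol \leq \epsilon$, $x_1 - \lb \geq -\fptol\one \geq -\epsilon\one$, $c - A^\T y_1 \geq -\fptol\one \geq -\epsilon\one$, and $\abs{(x_1 - \lb)^\T (c - A^\T y_1)} \leq \sigma = \sigma\epsilon^{2(1-1)}$, using $\fptol \leq \epsilon < 1$.

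For the inductive step I would spell out the $k$-th refinement of Algorithm~\ref{algo:sac}: it builds the \emph{refinement LP} whose right-hand side, lower bounds, and objective are the current residuals $b - Ax_k$, $\lb - x_k$, and $c - A^\T y_k$, each scaled by the new factor~$\scalfac_{k+1}$; it feeds the rounded version to the oracle; and from the returned correction $(\hat x,\hat y)$ it sets $x_{k+1} = x_k + \hat x/\scalfac_{k+1}$ and $y_{k+1} = y_k + \hat y/\scalfac_{k+1}$. The crux is that the four oracle guarantees of Definition~\ref{def:oracle}---which are stated for the \emph{unrounded} refinement LP, so that no extra rounding error enters---become, after dividing by~$\scalfac_{k+1}$ and using the identities $Ax_{k+1}-b = (A\hat x - \scalfac_{k+1}(b - Ax_k))/\scalfac_{k+1}$, $\hat x - \scalfac_{k+1}(\lb - x_k) = \scalfac_{k+1}(x_{k+1}-\lb)$, and $\scalfac_{k+1}(c - A^\T y_k) - A^\T\hat y = \scalfac_{k+1}(c - A^\T y_{k+1})$, exactly $\maxnorm{Ax_{k+1}-b} \leq \fptol/\scalfac_{k+1}$, $x_{k+1}-\lb \geq -(\fptol/\scalfac_{k+1})\one$, $c - A^\T y_{k+1} \geq -(\fptol/\scalfac_{k+1})\one$, and $\abs{(x_{k+1}-\lb)^\T(c - A^\T y_{k+1})} \leq \sigma/\scalfac_{k+1}^2$, the complementarity term being the only one in which the scaling enters quadratically. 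To close the step one bounds $\scalfac_{k+1}$ below: Algorithm~\ref{algo:sac} raises the scaling as far as allowed by the incremental cap $\scalfac_{k+1} \leq \alpha\scalfac_{k}$ and by keeping the scaled refinement data representable in $\fpnumbers(\fpdigits)$; because $\epsilon = \max\{\fptol, 1/\alpha\} \geq 1/\alpha$, i.e.\ $\alpha \geq 1/\epsilon$, the cap alone already yields $\scalfac_{k+1} \geq \scalfac_{k}/\epsilon$ whenever representability is not the binding constraint, and the induction hypothesis---the step-$k$ residuals being already of order~$\epsilon^k$---is what ensures this. Unrolling $\scalfac_{k+1} \geq \scalfac_{k}/\epsilon$ from $\scalfac_1 = 1$ gives $\scalfac_{k+1} \geq \epsilon^{-k}$, hence $\fptol/\scalfac_{k+1} \leq \fptol\epsilon^k \leq \epsilon^{k+1}$ and $\sigma/\scalfac_{k+1}^2 \leq \sigma\epsilon^{2k}$, which are precisely the bounds asserted at iteration $k+1$.

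For the iteration count, the stopping rule of Algorithm~\ref{algo:sac} fires once every violation is at most~$\termtol$; since after~$k$ calls the primal- and dual-feasibility violations are at most~$\epsilon^k$ and the complementarity violation at most~$\sigma\epsilon^{2(k-1)}$, it suffices that both $\epsilon^k \leq \termtol$ and $\sigma\epsilon^{2(k-1)} \leq \termtol$, and solving these for~$k$ (with $\log\epsilon < 0$, so the inequalities flip) and taking the ceiling of the maximum gives the bound~\eqref{lem:convrate:maxiters}. The step I expect to be the main obstacle is the lower bound on~$\scalfac_{k+1}$: one has to verify that the update rule of Algorithm~\ref{algo:sac} never lets the $\fpnumbers(\fpdigits)$-representability requirement override the incremental cap---the one place where the precise scaling rule, an a~priori bound on the magnitude of the iterates~$x_k$ (needed to control the shifted lower bounds), and the inductive bound on the residuals all have to be reconciled. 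The remainder is a routine substitution of the oracle's four inequalities through the scaling and the bound shift.
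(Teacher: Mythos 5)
Your proposal takes a genuinely different route from the paper: the paper's own proof is essentially a citation, invoking Corollary~1 of Gleixner, Steffy, and Wolter (reference \cite{GleixnerSteffyWolter2016}), noting that Algorithm~\ref{algo:sac} differs from their version only in tying the primal and dual scaling factors together and in rounding $\scalfac_{k+1}$ up to a power of two in line~\ref{algo:sac:scaling2}, and observing that the upward rounding can only increase $\scalfac_{k+1}$ so the cited argument goes through; the iteration count is then read off from $\max\{\epsilon^k,\sigma\epsilon^{2(k-1)}\}\leq\termtol$. You instead reconstruct the induction from scratch, which is a legitimate (and more self-contained) alternative; your base case and your algebra for propagating the four oracle guarantees through the scaling and the correction step---yielding violations $\leq\fptol/\scalfac_{k+1}$ and $\leq\sigma/\scalfac_{k+1}^2$---are correct, including the subtle point that the oracle bounds in Definition~\ref{def:oracle} are stated against the \emph{unrounded} data so that no additional rounding term appears.

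However, the step you yourself flag as ``the main obstacle'' contains a genuine misreading of the algorithm. Algorithm~\ref{algo:sac} does \emph{not} restrict $\scalfac_{k+1}$ to keep the scaled refinement data representable in $\fpnumbers(\fpdigits)$: line~\ref{algo:sac:rounding} rounds the scaled residual to working precision no matter how large $\scalfac_{k+1}$ is, so representability is never a constraint on the choice of scaling factor. What line~\ref{algo:sac:scaling1} actually does is place a \emph{floor} on $\delta_k$ at $1/(\alpha\scalfac_k)$, which via line~\ref{algo:sac:scaling2} imposes an \emph{upper cap} of roughly $\alpha\scalfac_k$ on the growth of the scaling factor---the opposite of the role you assign it. The lower bound $\scalfac_{k+1}\geq\scalfac_k/\epsilon$ must therefore come from an \emph{upper} bound $\delta_k\leq\epsilon/\scalfac_k$ on the measured residual, since $\scalfac_{k+1}=2^{\ceil{\log(1/\delta_k)}}\geq 1/\delta_k$. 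Your induction hypothesis gives $\delta_k\leq\max\{\fptol/\scalfac_k,\ \sigma/\scalfac_k^2,\ 1/(\alpha\scalfac_k)\}$; the first and third terms are $\leq\epsilon/\scalfac_k$ because $\fptol\leq\epsilon$ and $1/\alpha\leq\epsilon$, but the complementarity term is $\leq\epsilon/\scalfac_k$ only when $\sigma\leq\epsilon\scalfac_k$, which is not automatic at small~$k$ (in particular at $k=1$, where $\scalfac_1=1$). Handling this term---whether by a slightly weaker intermediate bound, by treating the first iterations separately, or by some other device---is precisely what the paper delegates to the cited Corollary~1, and it is the part your sketch leaves open.
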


\begin{proof}
  Corollary~1 in \cite{GleixnerSteffyWolter2016} proves the result for a more
  general version of Algorithm~\ref{algo:sac} that treats primal and dual scaling factors
  independently, but does not include the rounding step in
  line~\ref{algo:sac:scaling2}.
  However, the same proof continues to hold because the upward rounding does not decrease the scaling factor~$\scalfac_{k+1}$.
  Hence, the termination criterion $\delta_k \leq \termtol$ in
  line~\ref{algo:sac:termination} is met if $\max\{ \epsilon^k,
  \sigma\epsilon^{2(k-1)} \} \leq \termtol$, which holds when $k$ reaches the
  bound in \eqref{lem:convrate:maxiters}.
  Note that this bound on the number of refinements also holds in case the
  algorithm aborts prematurely after a failed oracle call.
  \qed
\end{proof}

\begin{algorithm}[t]
\caption{Iterative Refinement for a Primal and Dual Feasible LP}
\label{algo:sac}

\vspace*{0.5ex}

\Input{rational LP data  $A,b,\lb,c$,
  termination tolerance $\termtol \geq 0$}

\Params{incremental scaling limit $\alpha \in \mathbb{N}, \alpha \geq 2$}

\Output{primal--dual solution $x^*\in\Q^n,y^*\in\Q^m$ within tolerance $\tau$}

\Begin{
    $\scalfac_{1} \assign 1$\tcc*{initial solve}
    get $(\bar A,\bar b, \bar \lb, \bar c) \approx (A,b,\lb,c)$ in working precision of the oracle\;
    call oracle for $\min\{ \bar c^\T x : \bar Ax = \bar b, x \geq \bar\lb \}$, abort if failure\label{algo:sac:initfpsolve}\;
    $(x_{1},y_{1})$ \assign  approximate primal--dual solution returned\label{algo:sac:firstsol}\;
    \BlankLine
    \For(\tcc*[f]{refinement loop}){$k \assign 1,2,\ldots$}{
                        $\hat b \assign b - A x_k$,
      $\hat \lb \assign \lb - x_k$,
      $\hat c \assign c - A^\T y_k$\tcc*{compute residual error}

                              $\delta_{k} \assign \max\left\{ \max_{j} | \hat b_j |,
      \max_{i} \hat\lb_i, \max_i -\hat c_i,
      \abs{\sum_{i} -\hat\lb_i \hat c_i} \right\}$\label{algo:sac:violation}\;

      \lIf{$\delta_{k} \leq \termtol$\label{algo:sac:termination}}{
        return $x^* \assign x_k, y^* \assign y_k$
      }

            $\delta_{k} \assign \max\{ \delta_{k}, 1 / (\alpha\scalfac_{k}) \}$\label{algo:sac:scaling1}\tcc*{scale problem}
      $\scalfac_{k+1} \assign 2^{\ceil{\log (1/\delta_{k})}}$\label{algo:sac:scaling2}\tcc*{round scaling factor to a power of two}
      get $(\bar b, \bar \lb, \bar c) \approx \scalfac_{k+1} (\hat b, \hat \lb, \hat c)$ in working precision of the oracle\label{algo:sac:rounding}\;
            \tcc*{solve for corrector solution}
      call oracle for $\min\{ \bar c^\T x : \bar Ax = \bar b, x \geq \bar\lb \}$, abort if failure\label{algo:sac:reffpsolve}\;
      $(\hat x,\hat y)$ \assign approximate primal--dual solution returned\label{algo:sac:corrsol}\;

                        $(x_{k+1},y_{k+1}) \assign (x_{k},y_{k}) + \frac{1}{\scalfac_{k+1}} (\hat x,\hat y)$\label{algo:sac:correct}\tcc*{perform correction}
  }
}
\end{algorithm}
 
This lemma proves that the number of calls to the LP oracle before reaching a
positive termination tolerance~$\termtol$ is linear in the encoding length of~$\termtol$.
However, the correction step in line~\ref{algo:sac:correct}
could potentially cause the size of the numbers
in the corrected solution to grow exponentially.
The size of~$x_{k+1}$ and~$y_{k+1}$ may be as large as~$2 (\size{x_k} +
\size{\scalfac_{k+1}} \size{\hat x})$ and~$2 (\size{y_k} + \size{\scalfac_{k+1}}
\size{\hat y})$, respectively, if the numbers involved have arbitrary
denominators.
The following lemma shows that the rounding of the scaling factors prevents this
behavior.

\begin{lemma}\label{lem:polygrowth}
  The size of the numbers encountered during Algorithm~\ref{algo:sac} with a
  limited-precision LP oracle according to Definition~\ref{def:oracle} is
  polynomially bounded in the size of the input $A,b,\lb,c,\termtol$, when $\termtol>0$.
\end{lemma}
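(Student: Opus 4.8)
The plan is to track, iteration by iteration, the denominators of all quantities appearing in Algorithm~\ref{algo:sac}, and show that they remain powers of two times a fixed integer determined by the input. The key structural observation is that the scaling factor $\scalfac_{k+1} = 2^{\ceil{\log(1/\delta_k)}}$ is always an integer power of two (by line~\ref{algo:sac:scaling2}), and the oracle, by Definition~\ref{def:oracle}, only ever returns solutions with entries in $\fpnumbers(\fpdigits)$, i.e.\ rationals whose denominator divides $2^\fpdigits$. So the corrector $(\hat x,\hat y)$ has denominators dividing $2^\fpdigits$, and after the correction step in line~\ref{algo:sac:correct} the new iterate $(x_{k+1},y_{k+1}) = (x_k,y_k) + \frac{1}{\scalfac_{k+1}}(\hat x,\hat y)$ has denominator dividing $\mathrm{lcm}(\mathrm{denom}(x_k), \scalfac_{k+1}\cdot 2^\fpdigits)$.

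\textbf{Key steps.} First I would argue that, since $\scalfac_1 = 1$ and $x_1,y_1$ come directly from an oracle call, they lie in $\fpnumbers(\fpdigits)$, so their denominators divide $2^\fpdigits$. Second, I would show inductively that the denominator of $x_k$ (and $y_k$) divides $L \cdot 2^{e_k}$ for some exponent $e_k$, where $L$ is the least common multiple of the denominators in $A,b,\lb,c$ — the point being that the only new denominators introduced are powers of two (from $\scalfac_{k+1}$ and from $\fpnumbers(\fpdigits)$) and whatever is already present in the residual computations $\hat b = b - Ax_k$, $\hat\lb = \lb - x_k$, $\hat c = c - A^\T y_k$, which only contributes the fixed factor $L$. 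Third — and this is where Lemma~\ref{lem:convrate} does the work — I would bound the exponent $e_k$: from Lemma~\ref{lem:convrate} we have $\scalfac_{k+1} \geq \scalfac_k/\epsilon$, and more to the point $\scalfac_{k+1} = 2^{\ceil{\log(1/\delta_k)}} \leq 2/\delta_k$, while $\delta_k \geq 1/(\alpha\scalfac_k)$ after line~\ref{algo:sac:scaling1}; combined with the termination bound \eqref{lem:convrate:maxiters}, which is $\order(\size{\termtol})$, the total accumulated power of two across all iterations is polynomially bounded in $\size{\termtol}$ plus the (polynomial in the input size) magnitude of the data. Fourth, I would translate a bound on the denominator together with a bound on the magnitude (using the convergence estimates \eqref{point1}--\eqref{point3}, which show $x_k$ stays within $\epsilon^k\one$ of feasibility, hence bounded in terms of the input) into a bound on the encoding length $\size{x_k}$, $\size{y_k}$, and likewise for the auxiliary vectors $\hat b,\hat\lb,\hat c,\bar b,\bar\lb,\bar c$ and the scaling factors themselves.

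\textbf{Main obstacle.} The delicate point is controlling the \emph{numerators}, not just the denominators. Even if denominators stay nicely of the form $L\cdot 2^{e_k}$ with $e_k$ polynomially bounded, the encoding length is $\size{\text{num}} + \size{\text{denom}}$, so I must separately bound the magnitude of $x_k$ and $y_k$. For this I would invoke the feasibility estimates in Lemma~\ref{lem:convrate}: $x_k$ is within $\epsilon^k\one \leq \one$ of satisfying $Ax=b$, $x\geq\lb$, which together with full row rank of $A$ confines $x_k$ to a bounded region whose size is polynomial in $\size{A},\size{b},\size{\lb}$; similarly $c - A^\T y_k \geq -\epsilon^k\one$ bounds $y_k$. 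The care needed is that these are bounds on the approximately-feasible iterates rather than exact vertices, so I would argue via a perturbed system or simply via the explicit bound $\maxnorm{x_k} \leq \maxnorm{\lb} + $ (something polynomial), exploiting that the residuals are all at most~$1$ in absolute value. Once magnitude and denominator are both under control for $x_k,y_k$, the bounds for $\hat b,\hat\lb,\hat c$ follow from compatibility of the norms with the matrix–vector product, the bounds for $\bar b,\bar\lb,\bar c$ follow because rounding into $\fpnumbers(\fpdigits)$ cannot increase encoding length beyond $\order(\fpdigits + \log\maxnorm{\scalfac_{k+1}(\hat b,\hat\lb,\hat c)})$, and the bound for $\scalfac_{k+1}$ is immediate from $e_k$ being polynomially bounded. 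Assembling these componentwise bounds over the $\order(\size{\termtol})$ iterations gives the claim.
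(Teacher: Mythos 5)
Your overall plan is close to the paper's in structure---track denominators as powers of two scaled by the oracle's denominator $2^{\fpdigits}$, bound the accumulated exponent by noting that each $\scalfac_{k+1}$ is at most $2\alpha\scalfac_k$, and multiply by the $\order(\size{\termtol})$ iteration bound from Lemma~\ref{lem:convrate}. One small inaccuracy: the factor $L$ never actually enters the denominators of $x_k,y_k$, since the correction step adds $\frac{1}{\scalfac_{k+1}}\hat x$ where $\hat x$ comes straight from the oracle and lies in $\fpnumbers(\fpdigits)$; the residuals $\hat b,\hat\lb,\hat c$ do involve $L$ but are rounded to $\fpnumbers(\fpdigits)$ before the oracle sees them, so $L$ affects only the intermediate residual vectors, not the iterates. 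This is harmless (an overbound), but it signals a misreading of the algorithm's data flow.

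The genuine gap is in your step for bounding the \emph{numerators}. You propose to confine $\maxnorm{x_k}$ using the approximate-feasibility estimates from Lemma~\ref{lem:convrate} together with full row rank of $A$, but this argument is simply false: a polyhedron $\{x : Ax = b,\ x \geq \lb\}$ can be unbounded even when $A$ has full row rank (recession directions are not excluded), so approximate feasibility does not confine $x_k$ to any bounded region. The correct mechanism, which is what the paper actually uses, is structural: since every corrector entry satisfies $\hat x_i = n_i/2^{\fpdigits}$ with $|n_i| \leq 2^{2\fpdigits}$ (i.e.\ $\maxnorm{\hat x} \leq 2^{\fpdigits}$) and the scaling factors grow at least geometrically, when the iterate
\[
(x_k)_i = \sum_{j=1}^{k} \scalfac_j^{-1} \frac{n_j}{2^{\fpdigits}}
\]
is put over the common denominator $2^{\fpdigits + \ceil{\log\alpha}(k-1)}$, the integer numerator is a geometric-type sum bounded by $2^{2\fpdigits + \ceil{\log\alpha} k}$. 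This gives a linear-in-$k$ bound on the encoding length of each entry that holds regardless of whether the LP's feasible region, or the sequence of iterates, is bounded. In short: you need to invoke the \emph{boundedness of the oracle's output} (built into Definition~\ref{def:oracle} via $\fpnumbers(\fpdigits)$) rather than feasibility of the LP; with that substitution, the rest of your outline goes through as in the paper.
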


A technical proof is provided in Section~\ref{subsec:proofs2}.
The bound established on the encoding length of all numbers encountered during
the course of the algorithm is $\order(\size{A,b,\lb,c}+ (n+m)\size{\termtol})$.
This leads to the main result of this section.

\begin{theorem}\label{the:lpir:polytime}
  Algorithm~\ref{algo:sac} with a limited-precision LP oracle according to
  Definition~\ref{def:oracle} runs in oracle-polynomial time, i.e., it requires a polynomial number of oracle calls and a polynomial number of bit operations in the size of the input $A,b,\lb,c,\termtol$, when $\termtol>0$.
\end{theorem}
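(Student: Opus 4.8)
The plan is to assemble the theorem from the two lemmas just established together with a routine accounting of the work done in each refinement step. Lemma~\ref{lem:convrate} controls the number of iterations, and hence the number of oracle calls; Lemma~\ref{lem:polygrowth} controls the encoding length of every rational number that is ever produced; and it then remains only to verify that a single pass through the refinement loop of Algorithm~\ref{algo:sac} costs a polynomial number of bit operations in those encoding lengths. Multiplying the two polynomial quantities yields the claim.

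First I would observe that the constants $\epsilon = \max\{\fptol, 1/\alpha\}$ and $\sigma$ in Lemma~\ref{lem:convrate} are fixed: $\fptol$ and $\sigma$ are supplied by the oracle and $\alpha$ is a fixed algorithmic parameter. Hence the termination bound~\eqref{lem:convrate:maxiters} is $O(\log(1/\termtol)) = O(\size{\termtol})$, so Algorithm~\ref{algo:sac} makes at most linearly — in particular polynomially — many oracle calls in the size of the input, and this remains true if the algorithm aborts early after a failed call. I would also note that every oracle call is issued on data first rounded into $\fpnumbers(\fpdigits)$, so each entry passed to the oracle has encoding length $O(\fpdigits) = O(1)$; this makes ``one oracle call'' a well-defined unit, and means that apart from forming and rounding the scaled residual, preparing a call is cheap.

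Next I would bound the bit operations per iteration. By Lemma~\ref{lem:polygrowth} there is a polynomial $L = O(\size{A,b,\lb,c} + (n+m)\size{\termtol})$ bounding the encoding length of every rational produced during the run — the iterates $x_k,y_k$, the residuals $\hat b,\hat\lb,\hat c$, the violation $\delta_k$, the scaling factor $\scalfac_{k+1}$, and the exact product $\scalfac_{k+1}(\hat b,\hat\lb,\hat c)$. A single refinement step forms $\hat b = b - Ax_k$, $\hat\lb = \lb - x_k$, and $\hat c = c - A^\T y_k$ with $O(mn)$ rational additions and multiplications, computes $\delta_k$ and one value $2^{\ceil{\log(1/\delta_k)}}$ with $O(m+n)$ further operations, rounds the scaled residual into $\fpnumbers(\fpdigits)$, and performs the correction $x_{k+1} = x_k + \scalfac_{k+1}^{-1}\hat x$, $y_{k+1} = y_k + \scalfac_{k+1}^{-1}\hat y$ in $O(m+n)$ operations. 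Every rational operation here — including the gcd reductions needed to keep fractions in lowest terms, the exact evaluation of $\ceil{\log(1/\delta_k)}$ and of the corresponding power of two, and the two roundings — uses a number of bit operations polynomial in $L$ (for instance $O(L^2)$ with schoolbook arithmetic). Hence each iteration costs $\mathrm{poly}(L,m,n)$ bit operations, and multiplying by the $O(\size{\termtol})$ iterations gives a total of $\mathrm{poly}(\size{A,b,\lb,c,\termtol})$ bit operations alongside $O(\size{\termtol})$ oracle calls, which is the asserted oracle-polynomial bound.

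The only place where genuine care is needed is the exact rational arithmetic in the residual and correction steps: as observed immediately after Lemma~\ref{lem:convrate}, without the rounding of the scaling factors to powers of two the denominators of $x_{k+1},y_{k+1}$ could grow exponentially in the number of refinements, which would wreck the per-step cost estimate above. So the whole argument rests on Lemma~\ref{lem:polygrowth}; once that polynomial size bound is in hand the remainder is bookkeeping, the one subtlety being that the iteration count enters multiplicatively, so the number of oracle calls and the per-iteration bit cost must both be known to be polynomial at the same time.
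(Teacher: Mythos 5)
Your proof is correct and takes essentially the same approach as the paper's: combine the iteration bound from Lemma~\ref{lem:convrate} with the size bound from Lemma~\ref{lem:polygrowth} and observe that each refinement step performs only polynomially many rational operations on polynomially sized numbers. The paper is terser (it counts $O(n+m+\mathrm{nnz})$ operations per iteration rather than your $O(mn)$, but that is immaterial), while you spell out the per-iteration bookkeeping and the $O(\size{\termtol})$ oracle-call count more explicitly.
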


\begin{proof}
  The initial setup and each iteration are~$\order(n+m+nnz)$ operations on numbers
  that, by Lemma~\ref{lem:polygrowth}, are of polynomially bounded size.
  Here $nnz$ denotes the number of nonzero entries in~$A$.
  By Lemma~\ref{lem:convrate} we know that the number of iterations of the
  algorithm is polynomially bounded in the encoding length of the input.
  \qed
\end{proof}

 \section{Oracle Algorithms with Basis Verification}
\label{sec:oracle}

Iterative refinement as stated in Algorithm~\ref{algo:sac} only terminates in
finite time for positive termination tolerance~$\termtol > 0$.
The first extension, presented in this section, assumes a
\emph{limited-precision LP-basis oracle} as formalized in
Definition~\ref{def:oracle} and computes exact basic solutions with zero
violation in finite, oracle-polynomial running time.

\subsection{Convergence of Basic Solutions}

If the LP oracle additionally returns basic solutions for the transformed problem $\min\{
\hat c^\T x : \hat Ax = \hat b, x \geq \hat\lb \}$, then
Algorithm~\ref{algo:sac} produces a sequence $(x_k,y_k,\B_k)_{k=1,2,\ldots}$.
Theorem 3.1 in~\cite{GleixnerSteffyWolter2016} already states that if the
corrector solution~$\hat x,\hat y$ returned by the LP oracle is the exact basic
solution for $\B_k$, then the corrected solution~$x_{k+1},y_{k+1}$ in
line~\ref{algo:sac:correct} is guaranteed
to be the unique solution to the original LP determined by $\B_k$.
This is only of theoretical interest since the LP oracle returns only
approximately basic solutions:
Still, we can ask whether and under which conditions the sequence of bases is
guaranteed to become optimal in a finite number of refinements.
We show that properties~\eqref{def:oracle:primalbasis} and
\eqref{def:oracle:dualbasis} suffice to guarantee optimality after a number of
refinements that is polynomial in the size of the input.
The proof relies on the fact that there are only finitely many non-optimal basic
solutions and that their infeasibilities are bounded.
This is formalized by the following lemma.

\begin{lemma}\label{lem:lpex:basisviolation}   Given an LP \eqref{prob:lpeq} with rational data~$A\in\Q^{m\times n}$, $b\in\Q^m$, and~$\lb,c\in\Q^n$,
  the following hold for any basic primal--dual solution~$x,y$:
  \begin{enumerate}
  \item Either~$x$ is (exactly) primal feasible or its maximum primal violation
    has at least the value $1 / 2^{4\size{A,b} + 5\size{\lb} + 2n^2 + 4n}$.
  \item Either~$y$ is (exactly) dual feasible or its maximum dual violation has
    at least the value~$1 / 2^{4\size{A, c} + 2n^2 + 4n}$.
  \end{enumerate}
\end{lemma}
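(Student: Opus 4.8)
The plan is to treat the two parts symmetrically, as each amounts to bounding below the smallest positive value attainable by a coordinate of a rational vector whose denominator is controlled a priori. Consider a basic primal solution $x$ associated with a basis $\B$. Following the construction in the lemma on Hadamard bounds, after scaling by the least common multiple $L$ of the denominators of $A,b,\lb$ the basic solution solves a linear system $\tilde B x = \tilde d$ with an integral square matrix $\tilde B$ built from rows of $A$ together with unit rows, and an integral right-hand side built from entries of $b$ and $\lb$. By Cramer's rule, each coordinate $x_j$ equals $\det(\tilde B^{(j)})/\det(\tilde B)$ for matrices obtained by column replacement; hence $x_j$ is a rational number whose denominator divides $\det(\tilde B)$ and whose numerator is an integer of bounded size. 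The primal violation is $\maxnorm{(Ax-b)^-} $ together with $\maxnorm{(\lb-x)^+}$ — in either case a coordinate of a vector that, after clearing denominators through $L$ and $\det(\tilde B)$, becomes an integer. So the violation is either zero or at least $1$ divided by the product of those denominators.

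First I would make the denominator bound explicit. The common denominator is at most $L \cdot \abs{\det(\tilde B)}$; applying Hadamard's inequality to the rows of $\tilde B$ (rows of $A$ have $2$-norm at most $\sqrt n \maxnorm{A_{j\cdot}}$, unit rows contribute $1$) gives $\abs{\det(\tilde B)} \le n^{m/2}\prod_j \maxnorm{A_{j\cdot}}$, and $L$ as well as $\prod_j\maxnorm{A_{j\cdot}}$ are bounded by $2^{\size{A,b,\lb}}$ up to polynomial-in-$n$ factors coming from $n^{m/2}$ and from expressing norms as sums. Then I would bound the numerator: after scaling, $Ax-b$ and $\lb - x$ have integer entries once multiplied by $L\abs{\det(\tilde B)}$, so a nonzero violation is at least $1/(L\abs{\det(\tilde B)})$, and I would absorb everything into an exponent of the form $4\size{A,b}+5\size{\lb}+2n^2+4n$ — the $2n^2+4n$ term being exactly the slack needed to cover $n^{m/2}\le 2^{n^2/2}$, the conversion between $\maxnorm{\cdot}$ and $\norm{\cdot}_2$, the product over $j$, and the extra factors of $L$ entering both the scaled data and the determinant. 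Part~2 is identical with $y$ in place of $x$: a basic dual solution has $y^\T = c_\B^\T (A_{\cdot\B})^{-1}$, so $y$ and the reduced costs $c - A^\T y$ have denominators dividing $\det(A_{\cdot\B})$ (no $\lb$, no $b$ involved), yielding the smaller exponent $4\size{A,c}+2n^2+4n$.

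The routine part is the bookkeeping of exponents; the one place requiring care is making sure the right-hand side $\tilde d$ for the primal system is correctly accounted for — it involves both $b_i$ and terms $\sum_{i\notin\B} A_{\cdot i}\lb_i$, which is why $\size{\lb}$ appears with coefficient $5$ rather than $1$ (each $\lb_i$ can be multiplied by a column of $A$ and then by cofactors, compounding its size), and also why the scaling by $L$ must use the lcm over $A,b,\lb$ but the dual bound only needs the lcm over $A,c$. I would therefore state the denominator and numerator bounds as two displayed inequalities, invoke the already-proven Hadamard-type lemma for the determinant, and then verify that the claimed exponents dominate the resulting product; the referenced equations~\eqref{equ:lpex:primalrowbasis} and~\eqref{equ:lpex:dualrowbasis} from Section~\ref{subsec:proofs3} supply the precise form of $\tilde B$ and $\tilde d$ that pins down the constants.
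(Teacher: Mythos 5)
Your high-level strategy is right: reduce the claim to bounding denominators of the basic solution components, then note that a nonzero rational with denominator at most $D$ has absolute value at least $1/D$. You also correctly point to the systems~\eqref{equ:lpex:primalrowbasis} and~\eqref{equ:lpex:dualrowbasis} and to the need to account separately for the denominator of $\lb_i$ in the difference $x_i - \lb_i$. However, the specific route you propose for the denominator bound — scale the system by $L = \mathrm{lcm}$ of the entry denominators, then bound the resulting integer determinant by Hadamard — cannot produce the exponent in the lemma. Scaling an $n\times n$ rational matrix by $L$ multiplies its determinant by $L^n$, and $L$ alone can be as large as $2^{\size{A,b,\lb}}$. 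This injects a term of order $n\,\size{A,b,\lb}$ into the exponent, which cannot be absorbed by $2n^2+4n$: if the entries of $A,b,\lb$ carry many distinct prime denominators, $\size{A,b,\lb}$ is unbounded for fixed $m,n$, and $(n-4)\size{A,b,\lb}$ eventually exceeds any polynomial in $n$ alone. Indeed the paper's own Hadamard lemma~\eqref{equ:hadamardlp} — the very construction you invoke — carries exactly this $L^n$ factor, and the surrounding text explicitly flags it as a coarse bound relative to what is needed here.

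The paper instead applies Lemma~\ref{lem:intro:cramer}, which rests on the Gr\"otschel--Lov\'asz--Schrijver estimate $\size{\det D} \leq 2\size{D} - n^2$ for \emph{rational} square matrices, inequality~\eqref{lem:intro:detsize}. This controls the encoding length of a rational determinant directly (the denominator of $\det D$ divides the \emph{product} of the entry denominators, not $L^n$), giving $\size{x_i} \leq 4\size{\tilde B,\tilde b} - 2n^2$ with no integer-scaling step; plugging in $\size{\tilde B,\tilde b} \leq \size{A,b,\lb} + (n+1)(n-m)$ yields denominators of size $O(\size{A,b,\lb}+n^2)$, matching the claim. The $5\size{\lb}$ coefficient then has a simpler explanation than the one you offer: writing $x_i = p/q$ and $\lb_i = p'/q'$, the primal violation is $\abs{pq'-p'q}/(qq')$, and the extra factor is just $q' \le 2^{\size{\lb}}$, the denominator of $\lb_i$ itself — not an amplification of $\lb_i$ through cofactor expansions. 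To repair your argument, replace the lcm/Hadamard step with an appeal to Lemma~\ref{lem:intro:cramer} applied to the rational systems~\eqref{equ:lpex:primalrowbasis} and~\eqref{equ:lpex:dualrowbasis}.
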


  A detailed proof of Lemma~\ref{lem:lpex:basisviolation} is found in
  Section~\ref{subsec:proofs3} but the basic idea is summarized as follows.
  Suppose~$x,y$ is a basic primal--dual solution with respect to\ some basis~$\B$.
  By standard arguments, we show that the size of the entries in $x$ and
  $y$ is bounded by a polynomial in~$\size{A,b,\lb,c}$ and that all possible
  violations can be expressed as differences of rational numbers with bounded
  denominator (or zero).

The following theorem states the main convergence result.

\begin{theorem}\label{the:lpex:optconv}   Suppose we are given an LP \eqref{prob:lpeq},
  a fixed $\epsilon$, $0 < \epsilon < 1$, and a sequence of primal--dual solutions~$x_k, y_k$ with associated bases~$\B_k$ such that (\ref{point1}--\ref{point3}) and
  \begin{subequations}
    \begin{eqnarray}
      &  \abs{(x_k)_i - \lb_i} \leq \epsilon^k \text{ for all } i\not\in\B_k,\label{point4}\\
      &  \abs{c_i - y_k^\T A_{\cdot i}} \leq \epsilon^k \text{ for all } i\in\B_k\label{point5}
    \end{eqnarray}
  \end{subequations}
  hold for $k=1,2,\ldots$.
  Then there exists a threshold $K=K(A,m,n,b,\lb,c,\epsilon)$ such that the bases $\B_k$ are optimal for all~$k \geq K$.
  The function satisfies the asymptotic bound
  $K(A,m,n,b,\lb,c,\epsilon) \in \order((m^2\size{A} + \size{b,\lb,c} + n^2)/ \log(1/\epsilon))$.
\end{theorem}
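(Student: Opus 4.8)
The plan is to combine the convergence rates (\ref{point1}--\ref{point3}) and (\ref{point4}--\ref{point5}) with the quantitative separation bounds of Lemma~\ref{lem:lpex:basisviolation} to argue that, once $\epsilon^k$ drops below the smallest possible nonzero violation of any non-optimal basic solution, the basis $\B_k$ cannot be non-optimal. First I would fix an iteration index $k$ and consider the basic primal--dual solution $x,y$ \emph{exactly} determined by the basis $\B_k$ — that is, the true rational basic solution associated with $\B_k$, not the approximate $x_k,y_k$. The goal is to show that if $\B_k$ were non-optimal, then either the exact $x$ would be primal infeasible or the exact $y$ would be dual infeasible, and in either case the corresponding violation is bounded below by the Lemma~\ref{lem:lpex:basisviolation} quantities; meanwhile the hypotheses force the approximate iterates to be close enough to those exact basic values that such a large violation is incompatible with (\ref{point1}--\ref{point5}).

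The key step is to transfer the smallness of the residuals on $(x_k,y_k)$ to smallness of the residuals on the \emph{exact} basic solution $(x,y)$ of $\B_k$. Conditions (\ref{point2}) and (\ref{point4}) say that $x_k$ is within $\epsilon^k$ of its lower bound on all nonbasic indices and within $\epsilon^k$ of primal feasibility; since the $m$ equality constraints $Ax_k = b$ hold up to $\epsilon^k$ by (\ref{point1}) and the basic components of the exact solution $x$ are obtained by solving $A_{\cdot\B_k} x_{\B_k} = b - A_{\cdot N}\lb_N$, I would use the bound on $\|A_{\cdot\B_k}^{-1}\|$ (polynomial in $\size A$, via Cramer's rule and Hadamard as in the Lemma preceding Theorem~\ref{the:diophantine}) to show $\|x - x_k\|_\infty$ is at most $\epsilon^k$ times a factor of size $\order(m^2\size A + \size{b,\lb})$ — i.e.\ $\|x-x_k\|_\infty \le \epsilon^k\cdot 2^{\order(m^2\size A + \size{b,\lb})}$. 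Hence the exact primal violation of $x$ is at most $\epsilon^k 2^{\order(m^2\size A + \size{b,\lb,c} + n^2)}$, and symmetrically for the exact dual solution $y$ using (\ref{point3}), (\ref{point5}), and a bound on the relevant inverse in the dual system. Now if $\B_k$ is non-optimal, by LP duality its exact basic solution violates primal or dual feasibility, and Lemma~\ref{lem:lpex:basisviolation} guarantees that violation is at least $1/2^{\order(\size{A,b}+\size{\lb}+n^2)}$ (resp.\ dual). Combining the upper and lower bounds gives $\epsilon^k \ge 1/2^{\order(m^2\size A + \size{b,\lb,c} + n^2)}$, i.e.\ $k \le \order((m^2\size A + \size{b,\lb,c} + n^2)/\log(1/\epsilon))$. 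Taking $K$ to be one more than this bound, every $\B_k$ with $k\ge K$ must be optimal, which also yields the stated asymptotic form of $K(A,m,n,b,\lb,c,\epsilon)$.

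I would also need to dispense with one subtlety: even when $\B_k$ is ``optimal'' we want the \emph{exact} basic solution of $\B_k$ to be feasible, so the argument should be phrased as ``non-optimal $\B_k$ forces large exact violation,'' using that a basis is optimal precisely when its exact basic primal--dual pair is feasible (complementary slackness being automatic for a basic pair). Since there are only finitely many bases and only finitely many non-optimal ones, and each carries a definite positive violation bounded below uniformly, the threshold $K$ is well defined. The main obstacle I expect is the bookkeeping in the transfer step — getting a clean polynomial bound on $\|x-x_k\|_\infty$ and $\|y-y_k\|_\infty$ in terms of the right combination $m^2\size A + \size{b,\lb,c} + n^2$, since this requires carefully bounding the size of $A_{\cdot\B_k}^{-1}$ (and the analogous dual inverse) via Cramer's rule and Hadamard, and tracking how the $\epsilon^k$ factors from (\ref{point1}) and (\ref{point4}) propagate through the linear solve; this is exactly the kind of estimate deferred to Section~\ref{subsec:proofs3}, and matching its constants to the claimed asymptotics is where the real work lies.
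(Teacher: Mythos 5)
Your proposal is correct and follows essentially the same route as the paper's proof: both fix a round $k$, pass from the approximate iterate $(x_k,y_k)$ to the exact basic solution associated with $\B_k$ by bounding $\maxnorm{x_k - \tilde x_k}$ and $\maxnorm{y_k - \tilde y_k}$ via $\maxnorm{A_{\cdot\B_k}^{-1}}$ (Cramer/Hadamard estimates), thereby showing the exact basic solution's primal and dual violations are $\order(2^{\order(m^2\size{A})}\epsilon^k)$, and then invoke the separation thresholds of Lemma~\ref{lem:lpex:basisviolation} to conclude feasibility (hence, with automatic complementary slackness, optimality) once $\epsilon^k$ is small enough, giving the stated $K$. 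The only cosmetic difference is that you phrase the conclusion as a contradiction against a hypothetically non-optimal $\B_k$, whereas the paper directly shows the exact basic solution becomes feasible; these are equivalent.
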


\noindent
  Again, the detailed proof of Theorem~\ref{the:lpex:optconv} is found in
  Section~\ref{subsec:proofs3}.
  The proof uses \eqref{point4} and \eqref{point5} to
  show analogs of~(\ref{point1}--\ref{point3}) hold with right-hand side~$2^{4m^2\size{A} + 2} \epsilon^k$
  for the solutions~$\tilde x_k, \tilde y_k$ associated with bases $\B_k$.
  Then for~$k \geq K$, the primal and dual
  violations of~$\tilde x_k, \tilde y_k$ drop below the minimum
  thresholds stated in Lemma~\ref{lem:lpex:basisviolation}.  From then
  on~$\B_k$ must be optimal.

  Conditions~(\ref{point1}--\ref{point3}) require that the
  primal and dual violations of~$x_k, y_k$ converge to zero precisely as is
  guaranteed by Lemma~\ref{lem:convrate} for the sequence of numeric solutions
  produced by iterative refinement.
  Additionally, \eqref{point4} and \eqref{point5} assume
  that the numeric solutions become ``more and more basic'' in the sense that
  the deviation of the nonbasic variables from their bounds and the absolute
  value of the reduced costs of basic variables converges to zero at the same
  rate as the primal and dual violations.
This is shown by the following lemma using properties~\eqref{def:oracle:primalbasis}
and~\eqref{def:oracle:dualbasis} of Definition~\ref{def:oracle}.

\begin{lemma} \label{the:lpex:basconv}
Suppose we are given a primal and dual feasible LP \eqref{prob:lpeq} and a
limited-precision LP-basis oracle according to Definition~\ref{def:oracle} with
constants~$\fpdigits$, $\fptol$, and $\sigma$.
Let $x_k,y_k,\B_k,\scalfac_{k}$, $k=1,2,\ldots$, be the
sequences of primal--dual solutions, bases, and scaling factors produced by
Algorithm~\ref{algo:sac} with incremental scaling limit~$\alpha \geq 2$, and
let~$\epsilon := \max\{\fptol, 1/\alpha\}$.
Then conditions~\eqref{point4} and~\eqref{point5} are satisfied for all~$k$.
\end{lemma}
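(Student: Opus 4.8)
The plan is to derive conditions~\eqref{point4} and~\eqref{point5} directly from the additional basis properties~\eqref{def:oracle:primalbasis} and~\eqref{def:oracle:dualbasis} of the oracle, after undoing the scaling by $\scalfac_{k+1}$ performed in Algorithm~\ref{algo:sac}. The only external ingredient needed is the estimate $\scalfac_{k+1}\geq\scalfac_k/\epsilon$ from Lemma~\ref{lem:convrate}.

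First I would pin down the bookkeeping. For $k\geq 1$, the oracle call in refinement iteration~$k$ is, in the sense of Definition~\ref{def:oracle}, applied to the correction LP
\begin{equation*}
  \min\{\,(\scalfac_{k+1}\hat c)^\T x : Ax = \scalfac_{k+1}\hat b,\ x\geq\scalfac_{k+1}\hat\lb\,\},
\end{equation*}
with $\hat b=b-Ax_k$, $\hat\lb=\lb-x_k$, $\hat c=c-A^\T y_k$: the oracle is handed this LP with all data rounded into $\fpnumbers(\fpdigits)$ (line~\ref{algo:sac:rounding}, the constraint matrix being the rounding $\bar A$ of $A$ fixed at the start), and returns $(\hat x,\hat y)$ together with a basis, which I denote $\B_{k+1}$ and associate with the new iterate $(x_{k+1},y_{k+1})=(x_k,y_k)+\tfrac{1}{\scalfac_{k+1}}(\hat x,\hat y)$; the basis $\B_1$ comes from the initial call in line~\ref{algo:sac:initfpsolve}. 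Since the guarantees in Definition~\ref{def:oracle} are phrased relative to the \emph{unrounded} data, \eqref{def:oracle:primalbasis} and~\eqref{def:oracle:dualbasis} applied to this correction LP give
\begin{align*}
  \abs{\hat x_i-\scalfac_{k+1}\hat\lb_i}&\leq\fptol \quad\text{for all }i\notin\B_{k+1},\\
  \abs{\scalfac_{k+1}\hat c_i-\hat y^\T A_{\cdot i}}&\leq\fptol \quad\text{for all }i\in\B_{k+1},
\end{align*}
where the second line features the \emph{original} matrix $A$, so no rounding error appears.

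Next I would undo the scaling. Because $\hat\lb_i=\lb_i-(x_k)_i$, we have $(x_{k+1})_i-\lb_i=\tfrac{1}{\scalfac_{k+1}}\bigl(\hat x_i-\scalfac_{k+1}\hat\lb_i\bigr)$, and similarly $c_i-y_{k+1}^\T A_{\cdot i}=\tfrac{1}{\scalfac_{k+1}}\bigl(\scalfac_{k+1}\hat c_i-\hat y^\T A_{\cdot i}\bigr)$. The two bounds above therefore give $\abs{(x_{k+1})_i-\lb_i}\leq\fptol/\scalfac_{k+1}$ for $i\notin\B_{k+1}$ and $\abs{c_i-y_{k+1}^\T A_{\cdot i}}\leq\fptol/\scalfac_{k+1}$ for $i\in\B_{k+1}$. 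From Lemma~\ref{lem:convrate} together with $\scalfac_1=1$ we get $\scalfac_{k+1}\geq 1/\epsilon^k$, and since $\fptol\leq\epsilon$ by the definition of $\epsilon$, this yields $\fptol/\scalfac_{k+1}\leq\fptol\,\epsilon^k\leq\epsilon^{k+1}$, which is exactly~\eqref{point4} and~\eqref{point5} for index $k+1$. The base case $k=1$ follows from the same two oracle properties applied to the initial call, whose unrounded data is $A,b,\lb,c$ itself, giving $\abs{(x_1)_i-\lb_i}\leq\fptol\leq\epsilon$ for $i\notin\B_1$ and $\abs{c_i-y_1^\T A_{\cdot i}}\leq\fptol\leq\epsilon$ for $i\in\B_1$. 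If the oracle reports a failure at some iteration, the sequence terminates and there is nothing left to show.

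The step that needs the most care is the bookkeeping in the second paragraph: identifying that the oracle in iteration~$k$ effectively sees lower bound $\scalfac_{k+1}\hat\lb$ and objective $\scalfac_{k+1}\hat c$, and --- crucially --- that because Definition~\ref{def:oracle} states \eqref{def:oracle:primalbasis}--\eqref{def:oracle:dualbasis} with respect to the original, pre-rounding data, the rounding performed before each oracle call does not contribute an error term to the reduced-cost bound. A secondary point is fixing the index of the basis returned in iteration~$k$ consistently with the statement of Theorem~\ref{the:lpex:optconv}; once that is settled, the whole lemma collapses to the one-line estimate $\fptol/\scalfac_{k+1}\leq\epsilon^{k+1}$.
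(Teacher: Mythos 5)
Your proof is correct and matches the paper's argument essentially step for step: base case from the oracle guarantees applied to the initial call, then for $k+1$ rewrite $(x_{k+1})_i-\lb_i$ and $c_i-y_{k+1}^\T A_{\cdot i}$ as the oracle's bounded residuals divided by $\scalfac_{k+1}$, and conclude via $\scalfac_{k+1}\geq\epsilon^{-k}$ and $\fptol\leq\epsilon$. The careful discussion of the indexing and of why the rounding in line~\ref{algo:sac:rounding} contributes no extra error term (the guarantees in Definition~\ref{def:oracle} are stated with respect to the unrounded data) is a welcome clarification that the paper leaves implicit, but the substance of the argument is the same.
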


\begin{proof}
  We prove both points together by induction over $k$.
  For $k=1$, they hold directly because the defining conditions
  \eqref{def:oracle:primalbasis} and \eqref{def:oracle:dualbasis} are satisfied
  for the initial floating-point solution and $\fptol \leq \epsilon$.
  Suppose~\eqref{point4} and~\eqref{point5} hold for $k \geq 1$ and consider
  $k+1$.
  Let $\hat x,\hat y, \hat\B$ be the last approximate solution returned by the
  LP solver.
  Then for all~$i\not\in\B_{k+1}$
  \begin{align*}
   \abs{(x_{k+1})_i - \lb_i}
   &= \bigabs{((x_k)_i + \frac{\hat x_i}{\scalfac_{k+1}}) - \lb_i}\\
   &= \abs{\hat x_i + \scalfac_{k+1}( \underbrace{(x_k)_i - \lb_i}_{= -\hat\lb_i} )} / \underbrace{\scalfac_{k+1}}_{\mathclap{\hspace*{3em}\geq \epsilon^{-k} \text{ by Lemma~\ref{lem:convrate}}}}\\
   &\leq \abs{\hat x_i - \scalfac_{k+1} \hat\lb_i} / \epsilon^k
   \stackrel{\eqref{def:oracle:primalbasis}}{\leq} \fptol\epsilon^k
   \leq \epsilon^{k+1},
  \end{align*}
  and similarly for all~$i\in\B_{k+1}$
  \begin{align*}
   \abs{c_i - y_{k+1}^\T A_{\cdot i}}
   &= \bigabs{c_i - \big( y_{k} + \frac{\hat y}{\scalfac_{k+1}} \big)^\T A_{\cdot i}}\\
   &= \abs{\scalfac_{k+1}( \underbrace{c_i - y_k^\T A_{\cdot i}}_{= \hat c_i} ) - \hat y^\T A_{\cdot i}} / \underbrace{\scalfac_{k+1}}_{\mathclap{\hspace*{3em}\geq \epsilon^{-k} \text{ by Lemma~\ref{lem:convrate}}}}\\
   &\leq \abs{\scalfac_{k+1}\hat c_i - \hat y^\T A_{\cdot i}} / \epsilon^k
   \stackrel{\eqref{def:oracle:primalbasis}}{\leq} \fptol\epsilon^k
   \leq \epsilon^{k+1}.
  \end{align*}
  This completes the induction step.
  \qed
\end{proof}

\subsection{Iterative Refinement with Basis Verification}
\label{sec:lpex:ratfac:basver}

  The bound on the number of refinements may seem surprisingly large, especially when
  considering that the best-known iteration complexity for interior point
  methods is\linebreak $\order(\sqrt{n + m} \size{A,b,\lb,c})$~\cite{Renegar1988} and
  that in each refinement round we solve an entire LP.
  One reason for this difference is that iterative refinement converges only
  linearly as proven in Lemma~\ref{lem:convrate}, while interior point
  algorithms are essentially a form of Newton's method, which allows for
  superlinear convergence.
  Additionally, the low-precision LPs solved by Algorithm~\ref{algo:sac} may be 
  less expensive in practice than performing interior point iterations in very high-precision arithmetic.
  Nevertheless, the convergence results above provide an important
  theoretical underpinning for the following algorithm.

As already observed experimentally in~\cite{GleixnerSteffyWolter2016}, in
practice, an optimal basis is typically reached after very few refinements, much
earlier than guaranteed by the worst-case bound of
Theorem~\ref{the:lpex:optconv}.
Hence, we do not want to rely on bounds computed a priori, but check the
optimality of the basis early.
A natural idea is to perform an exact rational solve as soon as the basis
has not changed for a specified number of refinements.
This is easily achieved by extending Algorithm~\ref{algo:sac} as follows.

Suppose the LP oracle returns a basis~$\hat\B$ in line~\ref{algo:sac:corrsol}.
First, we continue to perform the quick correction step and check the
termination conditions for the corrected solution until
line~\ref{algo:sac:termination}.
If they are violated, we solve the linear systems of equations associated
with~$\hat\B$ in order to obtain a basic solution $\tilde x,\tilde y$.
Because it is by construction complementary slack, we only check primal and dual
feasibility.
If this check is successful, the algorithm terminates with $x^* = \tilde x$ and
$y^* = \tilde y$ as optimal solution.
Otherwise it is discarded and Algorithm~\ref{algo:sac} continues with the next
refinement round.

In practice, the basis verification step can be skipped if the basis has not
changed since the last LP oracle call in order to save redundant computation.
Furthermore, because the linear systems solves can prove to be expensive, in
practice, it can be beneficial to delay them until the LP oracle has returned
the same basis information for a certain number of refinement rounds.
This does not affect the following main result.

\begin{theorem}   Suppose we are given a rational, primal and dual feasible LP
  \eqref{prob:lpeq} and a limited-precision LP-basis oracle according to
  Definition~\ref{def:oracle}.
  Algorithm~\ref{algo:sac} interleaved with a basis verification step before
  Line~\ref{algo:sac:scaling1} as described above terminates with an
  optimal solution to \eqref{prob:lpeq} in oracle-polynomial running time.
\end{theorem}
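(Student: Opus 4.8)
The plan is to compose the three convergence facts established above with a polynomial-time exact linear-algebra routine, so that essentially no new analytic work is needed. Run Algorithm~\ref{algo:sac} on \eqref{prob:lpeq} with termination tolerance $\termtol = 0$, so that the only way the algorithm can stop is through the interleaved basis verification step. Since the oracle is a limited-precision LP-basis oracle, every oracle call also returns a basis, and (as long as it does not stop) Algorithm~\ref{algo:sac} produces a sequence $(x_k, y_k, \B_k)_{k \geq 1}$, where $\B_k$ is the basis that was used to form $x_k, y_k$. By Lemma~\ref{lem:convrate} this sequence satisfies (\ref{point1}--\ref{point3}) with $\epsilon := \max\{\fptol, 1/\alpha\} \in (0,1)$, and by Lemma~\ref{the:lpex:basconv} it also satisfies \eqref{point4} and \eqref{point5}. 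Therefore Theorem~\ref{the:lpex:optconv} applies and yields a threshold $K = K(A,m,n,b,\lb,c,\epsilon) \in \order((m^2\size{A} + \size{b,\lb,c} + n^2)/\log(1/\epsilon))$ such that $\B_k$ is an optimal basis for every $k \geq K$.

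Next I would argue that basis verification forces termination, with a correct answer, no later than round $K$. In round $k$, after the residual and termination checks, the verification step computes the basic primal--dual solution $\tilde x, \tilde y$ determined by the most recently returned basis $\hat\B = \B_k$ by exactly solving the $m \times m$ rational systems $A_{\cdot\hat\B}\,\tilde x_{\hat\B} = b - \sum_{i\notin\hat\B} A_{\cdot i}\lb_i$ (with $\tilde x_i = \lb_i$ for $i \notin \hat\B$) and $A_{\cdot\hat\B}^\T\, \tilde y = c_{\hat\B}$. This solution is complementary slack by construction, so it is an optimal solution of \eqref{prob:lpeq} if and only if it is primal feasible ($\tilde x \geq \lb$) and dual feasible ($c - A^\T \tilde y \geq 0$) --- precisely the two tests the step performs. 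Hence the step never terminates with a wrong answer, and as soon as $\B_k$ is optimal, which holds for $k \geq K$, both tests succeed and the algorithm returns the exact optimal solution $(\tilde x, \tilde y)$. In particular the number of refinement rounds, and thus the number of oracle calls, is at most $K + 1$, which is polynomial in $\size{A,b,\lb,c}$.

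It remains to control the bit complexity. Within the first $K$ refinement rounds all numbers produced by Algorithm~\ref{algo:sac} have polynomially bounded encoding length. Indeed, rounding the scaling factor to a power of two in line~\ref{algo:sac:scaling2}, together with the relation $1/(\alpha\scalfac_k) \leq \delta_k$ enforced in line~\ref{algo:sac:scaling1}, gives $\scalfac_{k+1} \leq 2\alpha\scalfac_k$, hence $\size{\scalfac_k} = \order(k\,\size{\alpha})$; since the scaling factors are powers of two, the corrected solutions $x_{k+1}, y_{k+1}$ in line~\ref{algo:sac:correct} share a common denominator that is a power of two of polynomial size and so cannot blow up --- this is exactly the argument behind Lemma~\ref{lem:polygrowth}, now run with the refinement count capped at the polynomial $K$ instead of being read off from $\size{\termtol}$. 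So each refinement round costs polynomially many bit operations. Each basis verification step solves two $m \times m$ rational linear systems; using a polynomial-time exact method such as that of Edmonds~\cite{Edmonds1967} this needs polynomially many bit operations without intermediate expression swell, and by the Hadamard-type bound~\eqref{equ:hadamardlp} the entries of $\tilde x, \tilde y$ have polynomially bounded size, so the feasibility tests are polynomial as well. Summing over the at most $K$ rounds gives oracle-polynomial running time.

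I expect the main obstacle to be precisely the size-control step of the last paragraph, because Lemma~\ref{lem:polygrowth} is stated for $\termtol > 0$ and phrases its bound in terms of $\size{\termtol}$, whereas here $\termtol = 0$ and one has to re-run that argument with the refinement count bounded by the polynomial $K$ from Theorem~\ref{the:lpex:optconv}. A secondary, entirely routine point is invoking a variant of Gaussian elimination that keeps the numerators and denominators of intermediate quantities polynomially bounded, so that the exact basic solves and the feasibility comparisons really are polynomial-time.
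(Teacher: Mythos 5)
Your proof is correct and follows essentially the same approach as the paper: invoke Lemma~\ref{lem:convrate} and Lemma~\ref{the:lpex:basconv} to establish the hypotheses of Theorem~\ref{the:lpex:optconv}, conclude that basis verification must succeed within a polynomial number $K$ of refinement rounds, and then appeal to the number-size bounds from Lemma~\ref{lem:polygrowth} and to the polynomial-time exact linear-system solver of Edmonds to control the per-round bit complexity. You are in fact slightly more careful than the paper on one point: the paper simply cites Theorem~\ref{the:lpir:polytime}, whose statement (and that of Lemma~\ref{lem:polygrowth}) requires $\termtol > 0$, whereas here $\termtol = 0$; you correctly observe that the underlying size-growth argument is linear in the iteration index $k$ and so can be re-instantiated with the iteration count capped by the polynomial $K$ from Theorem~\ref{the:lpex:optconv}, which is the right way to close that small gap.
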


\begin{proof}
  Lemma~\ref{lem:convrate} and Lemma~\ref{the:lpex:basconv} prove
  that the sequence of basic solutions~$x_k,y_k,\B_k$ satisfies the conditions
  of Theorem~\ref{the:lpex:optconv}, hence~$\B_k$ is optimal after a polynomial
  number of refinements.
  According to Theorem~\ref{the:lpir:polytime}, this runs in oracle-polynomial
  time.
  As proven by~\cite{Edmonds1967}, the linear systems used to compute the
  basic solutions exactly over the rational numbers can be solved in polynomial
  time and this computation is done at most once per refinement round.
  \qed
\end{proof}

 \section{Rational Reconstruction Algorithms}
\label{sec:ratrecon}

The LP algorithm developed in the previous section relies solely on the
optimality of the basis information to construct an exact solution.
Except for the computation of the residual vectors it does not make use of the
more and more accurate numerical solutions produced.
In this section, we discuss a conceptually different technique that
exploits the approximate solutions as starting points in order to reconstruct from them
an exact optimal solution.
First we need to show that the sequence of approximate solutions actually converges.

\subsection{Convergence to an Optimal Solution}

Until now the convergence of the residual errors to zero was sufficient for our
results and we did not have to address the question whether the sequence of
solutions $x_k,y_k$ itself converges to a limit point.
The following example shows that this does not necessarily hold if the solutions
returned by the LP oracle are not bounded.

\begin{example}
  Consider the degenerate LP
  \begin{align*}
    \min\{ x_1 - x_2 \mid x_1 - x_2 = 0, -x_1 + x_2 = 0, x_1, x_2 \geq 0 \}.
  \end{align*}
  One can show that Algorithm~\ref{algo:sac} may produce the sequence of primal--dual
  solutions
  \begin{align*}
    x_k &= \big( 2^{k} + 2^{-k-1}, 2^{k} - 2^{-k-1} \big)\\
    y_k &= \big( 2^{k} + 2^{-1} + 2^{-3k-1}, 2^{k} - 2^{-1} - 2^{-3k-1} \big)
  \end{align*}
  for~$k=1,2,\ldots$.
  This happens if the LP oracle returns the approximate solution~$\hat x_k =
  \big( 2^{2k} - 1/4,2^{2k} + 1/4 \big)$ and~$\hat y_k = \big( 2^{2k} - 7\cdot
  2^{-2k-4},2^{2k} + 7\cdot 2^{-2k-4} \big)$ for the $k$-th transformed problem
  \begin{align*}
    \min\{ - 1/2^{2k} x_1 + 1/2^{2k} x_2 \mid\, &x_1 - x_2 = -1,
    -x_1 + x_2 = +1,\\
    &x_1 \geq -2^{2k-1} - 1/2,
    x_2 \geq -2^{2k-1} + 1/2 \},
  \end{align*}
  which is obtained with scaling factors~$\scalfac_{k} = 2^{k}$.
                                The primal violation of $x_k,y_k$ is $2^{-k}$, the dual violation is~$2^{-3k}$, and the
  violation of complementary slackness is~$2^{-4k}$.  Hence, all residual errors go to zero, but the
  iterates themselves go to infinity.
\end{example}

However, for corrector solutions from limited-precision
LP oracles the following holds.

\begin{lemma}\label{lem:cauchy}
  Given a rational, primal and dual feasible LP \eqref{prob:lpeq}
  and a limited-precision LP-basis oracle with precision~$\fpdigits$, let $(x_k,y_k,\scalfac_{k})_{k=1,2,\ldots}$ be the sequence of primal--dual
  solutions and scaling factors produced by Algorithm~\ref{algo:sac}.
  Define $C := 2^\fpdigits$.
  Then $(x_k,y_k)$ converges to a rational, basic, and optimal solution
  $(\tilde{x},\tilde{y})$ of \eqref{prob:lpeq} such that
  \begin{equation}
    \maxnorm{(\tilde{x},\tilde{y}) - (x_k,y_k)} \leq C
    \sum_{i=k+1}^{\infty} \scalfac_i^{-1}.
  \end{equation}
\end{lemma}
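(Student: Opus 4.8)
The plan is to show that the correction terms $\frac{1}{\scalfac_{k+1}}(\hat x,\hat y)$ are uniformly bounded by $C \scalfac_{k+1}^{-1}$, so that the partial sums defining $(x_k,y_k)$ form a Cauchy sequence whose tail is controlled by a geometric-type series, and then to identify the limit using the basis information. First I would recall from Definition~\ref{def:oracle} that the oracle returns approximate solutions in $\fpnumbers(\fpdigits)^n \times \fpnumbers(\fpdigits)^m$; by definition of $\fpnumbers(\fpdigits)$, every coordinate of $\hat x$ and $\hat y$ has absolute value at most $2^{2\fpdigits}/2^\fpdigits = 2^\fpdigits = C$. Hence $\maxnorm{\frac{1}{\scalfac_{k+1}}(\hat x,\hat y)} \leq C \scalfac_{k+1}^{-1}$. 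Since by Lemma~\ref{lem:convrate} we have $\scalfac_{k+1} \geq \scalfac_k/\epsilon$ with $\epsilon < 1$ (note $\epsilon = \max\{\fptol,1/\alpha\} < 1$), the scaling factors grow at least geometrically, so $\sum_{i} \scalfac_i^{-1}$ converges and the telescoping expression $x_k = x_1 + \sum_{i=2}^{k} \frac{1}{\scalfac_i}\hat x^{(i-1)}$ (and similarly for $y_k$) converges absolutely in $\maxnorm{\cdot}$. This immediately yields a limit $(\tilde x,\tilde y)$ with the stated bound $\maxnorm{(\tilde x,\tilde y)-(x_k,y_k)} \leq C\sum_{i=k+1}^{\infty}\scalfac_i^{-1}$.

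Next I would verify that the limit is feasible and optimal. Feasibility is easy: by Lemma~\ref{lem:convrate}, the residuals $\norm{Ax_k-b}_\infty$, $\max_i(\lb_i-(x_k)_i)_+$, and $\max_i(A^\T y_k - c)_i{}_+$ are all bounded by $\epsilon^k \to 0$, and since $A,b,\lb,c$ are fixed and the maps involved are continuous, passing to the limit gives $A\tilde x = b$, $\tilde x \geq \lb$, $c - A^\T\tilde y \geq 0$. The complementary slackness violation $\abs{(x_k-\lb)^\T(c-A^\T y_k)} \leq \sigma\epsilon^{2(k-1)} \to 0$ likewise passes to the limit, giving exact complementary slackness; together with primal and dual feasibility this certifies that $(\tilde x,\tilde y)$ is an optimal primal--dual pair.

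The more delicate part is showing that $(\tilde x,\tilde y)$ is \emph{rational} and \emph{basic}. For this I would invoke Lemma~\ref{the:lpex:basconv} and Theorem~\ref{the:lpex:optconv}: since we have an LP-basis oracle, conditions~\eqref{point4} and~\eqref{point5} hold for all $k$, and combined with (\ref{point1}--\ref{point3}) from Lemma~\ref{lem:convrate}, Theorem~\ref{the:lpex:optconv} guarantees there is a threshold $K$ such that $\B_k$ is an optimal basis for all $k \geq K$. Since there are only finitely many bases, we may pass to a subsequence along which $\B_k$ is constant, equal to some fixed optimal basis $\B$. The basic solution $(x_\B^*, y_\B^*)$ determined by $\B$ is rational (it solves a rational linear system) and is the unique optimal solution complementary to $\B$ in the appropriate faces. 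The limit $(\tilde x,\tilde y)$ is optimal and, from~\eqref{point4}, its nonbasic coordinates satisfy $\abs{\tilde x_i - \lb_i} = \lim_k \abs{(x_k)_i - \lb_i} \leq \lim_k \epsilon^k = 0$ for $i \notin \B$, so $\tilde x_i = \lb_i$; then $A\tilde x = b$ forces $\tilde x_\B = A_{\cdot\B}^{-1}(b - \sum_{i\notin\B}A_{\cdot i}\lb_i)$, the rational basic value. Symmetrically, from~\eqref{point5}, $c_i = \tilde y^\T A_{\cdot i}$ for $i \in \B$, which determines $\tilde y$ uniquely as the rational solution of $A_{\cdot\B}^\T \tilde y = c_\B$. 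Hence $(\tilde x,\tilde y)$ coincides with the rational basic solution for $\B$, completing the proof.

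The main obstacle I anticipate is the bookkeeping needed to make the ``pass to a subsequence with constant basis, then conclude the whole sequence converges to that basic point'' argument airtight: one has to be careful that the Cauchy argument already establishes convergence of the \emph{full} sequence $(x_k,y_k)$ to \emph{some} limit independent of the basis discussion, and only then use the basis information to pin down \emph{which} point the limit is; the two threads (metric convergence from geometric growth of $\scalfac_k$, and combinatorial stabilization of $\B_k$) must be kept logically separate and then merged, rather than circularly intertwined.
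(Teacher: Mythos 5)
Your proof is correct and shares the paper's core idea: bound the corrector solutions by $C = 2^\fpdigits$ using the definition of $\fpnumbers(\fpdigits)$, conclude the partial sums form a Cauchy sequence because $\scalfac_k$ grows geometrically, and obtain a unique limit with the stated tail bound. The difference lies in the final step establishing that the limit is rational and basic. The paper does not pass to a subsequence with constant basis or explicitly invoke the conclusion of Theorem~\ref{the:lpex:optconv}; instead it uses Claims~1 and~3 from that theorem's proof to show $\maxnorm{(x_k,y_k)-(\tilde x_k,\tilde y_k)}\to 0$ where $(\tilde x_k,\tilde y_k)$ is the exact basic solution for $\B_k$, concludes via the triangle inequality that the auxiliary basic solutions also converge to $(\tilde x,\tilde y)$, and then uses finiteness of the set of basic solutions to identify the limit as one of them. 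Your subsequence-plus-defining-equations argument reaches the same conclusion but invokes more machinery than needed (the full optimality of $\B_k$ from Theorem~\ref{the:lpex:optconv} is redundant once you have \eqref{point4}--\eqref{point5} from Lemma~\ref{the:lpex:basconv} and the independently established optimality of the limit via vanishing residuals); the paper's version is slightly leaner since it never needs the bases to stabilize, only the basic solution vectors. You also make the optimality of the limit explicit via continuity of the residual maps, which the paper leaves implicit. None of these differences affect correctness.
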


\begin{proof}
  This result inherently relies on the boundedness of the corrector solutions
  returned by the oracle.
  Since their entries are in $\fpnumbers(p)$, $\maxnorm{(\hat x_k,\hat y_k)}
  \leq 2^\fpdigits$.
  Then $(x_k,y_k) = \sum_{i = 1}^k \frac{1}{\scalfac_{i}} (\hat x_i,\hat y_i)$
  constitutes a Cauchy sequence: for any~$k,k' \geq K$,
  \begin{align}
    \maxnorm{(x_k,y_k) -
    (x_{k'},y_{k'})} \leq 2^\fpdigits \sum_{i = K+1}^\infty \epsilon^i =
    2^\fpdigits \epsilon^{K+1} / (1 - \epsilon),
  \end{align}
  where~$\epsilon$ is the rate of
  convergence from Lemma~\ref{lem:convrate}.
  Thus, a unique limit point~$(\tilde{x},\tilde{y})$ exists.

  The fact that $(\tilde{x},\tilde{y})$ is basic, hence also rational, follows
  from Claims~1 and~3 in the proof of Theorem~\ref{the:lpex:optconv}, see
  Section~\ref{sec:proofs}.
  Because of $\maxnorm{(\tilde{x},\tilde{y}) - (\tilde x_k,\tilde y_k)} \leq
  \maxnorm{(\tilde{x},\tilde{y}) - (x_k,y_k)} + \maxnorm{(x_k,y_k) - (\tilde
    x_k,\tilde y_k)}$, also the sequence of basic primal--dual solutions
  $(\tilde x_k,\tilde y_k)$ converges to the limit
  point~$(\tilde{x},\tilde{y})$.
  Since there are only finitely many basic solutions, this implies that
  $(\tilde{x},\tilde{y})$ must be one of the $(\tilde x_k,\tilde y_k)$.
  \qed
\end{proof}

Note that the statement holds for any upper bound $C$ on the absolute values in
the corrector solutions $\hat{x},\hat{y}$ returned by the oracle in
line~\ref{algo:sac:corrsol} of Algorithm~\ref{algo:sac}.
In practice, this may be much smaller than the largest floating-point
representable value, $2^\fpdigits$.

\subsection{Output-Sensitive Reconstruction of Rational Limit Points}

Suppose we know \emph{a priori} a bound~$M$ on the denominators in the limit,
then we can compute~$\tilde{x},\tilde{y}$ from an approximate solution
satisfying~$\maxnorm{(x_k,y_k) - (\tilde{x},\tilde{y})} < 1/(2M^2)$ by applying Theorem~\ref{the:diophantine} componentwise.
If the size of~$M$ is small, i.e., polynomial in the input size, then iterative
refinement produces a sufficiently accurate solution after a polynomial number
of refinements.
This eliminates the need to use other
methods such as rational matrix factorization to compute an exact solution.

However, known worst-case bounds for denominators in basic solutions are often very weak.
This has been demonstrated by~\cite{AbbottMulders2001} for random square
matrices and by~\cite{Steffy2011} for the special case of selected basis matrices from
linear programs.
For the Hadamard bound~$H$ from~\eqref{equ:hadamardlp} that holds for \emph{all} basis
matrices of an LP, this situation must be even more pronounced.
Tighter bounds that are reasonably cheap to compute are---to the best of our
knowledge---not available.
Computing an approximate solution with error below~$1/(2H^2)$ before applying the
rounding procedure can thus be unnecessarily expensive.
This motivates the following design of an output-sensitive algorithm, Algorithm~\ref{algo:lpex:ratrec}, that attempts to reconstruct exact
solution vectors during early rounds of refinement and tests the correctness of these
heuristically reconstructed solutions exactly using rational arithmetic.
We now give a description of it, followed by a proof of correctness and 
analysis of its running time.

The algorithm is an extension of the basic iterative refinement for linear programs,
Algorithm~\ref{algo:sac}, interleaved with attempts at rational reconstruction.
For~$k=1$, the algorithm starts with the first oracle call to obtain
the initial approximate solution and the corresponding residual errors.
Unless the solution is exactly optimal, we enter the rounding routine.
We compute a speculative bound on the denominator as~$M_k := \sqrt{\scalfac_{k+1} /
  (2 \beta^k)}$.
Then the value~$1/(2M_k^2)$ equals~$\beta^k / \scalfac_{k+1} \approx \beta^k
\delta_k$ and tries to estimate the error in the solution.
If reconstruction attempts fail, the term~$\beta^k$ keeps growing exponentially
such that we eventually obtain a true bound on the error.
Initially, however,~$\beta^k$ is small in order to account for the many cases
where the residual $\delta_k$ is a good proxy for the error.
We first apply rational reconstruction to each entry of the primal vector~$x_k$
using denominator bound~$M_k$, denoted by ``\round$((x_k)_i, M_k)$''.
Then we check primal feasibility before proceeding to the dual vector.
If feasibility and optimality could be verified in rational arithmetic, the
rounded solution is returned as optimal.
Otherwise, we compute the next refinement round after which reconstruction
should be tried again.
Because computing continued fraction approximations becomes increasingly
expensive as the encoding length of the approximate solutions grows, rational
reconstruction is executed at a geometric frequency governed by parameter~$f$.
This limits the cumulative effort that is spent on failed reconstruction attempts
at the expense of possibly increasing the number of refinements by a factor of $f$.
The following theorem shows that the algorithm computes an exactly optimal
solution to a primal and dual feasible LP under the conditions guaranteed by
Lemma~\ref{lem:cauchy}.

\begin{algorithm}[t]
\caption{Iterative Refinement with Rational Reconstruction}
\label{algo:lpex:ratrec}
\small
\vspace*{0.5ex}

\Input{rational, primal and dual feasible LP data $A,b,\lb,c$}

\Params{incremental scaling limit $\alpha \in \mathbb{N}, \alpha \geq 2$,
  geometric reconstruction frequency~$f \geq 1$,
  error correction factor~$\beta > 1$}

\Output{optimal primal--dual solution $x^*\in\Q^n,y^*\in\Q^m$}

\Begin{
    initialize Algorithm~\ref{algo:sac} with termination tolerance $\termtol = 0$\;
    $k^*$ \assign $0$\;
    \BlankLine
    \For(\tcc*[f]{refinement loop}){$k \assign 1,2,\ldots$}{
      perform next refinement step in Algorithm~\ref{algo:sac}\;
      $x_k,y_k$ \assign refined numeric solution\;
            $\scalfac_{k+1}$ \assign next scaling factor\;
      \BlankLine

      \uIf(\tcc*[f]{check termination}){$\delta_k = 0$}{
        return $x^* \assign x_k, y^* \assign y_k$\;
      }
      \ElseIf(\tcc*[f]{try reconstruction}){$k \geq k^*$}{
        $M_k$ \assign $\sqrt{\scalfac_{k+1} / 2 \beta^k}$\label{algo:lpex:ratrec:dbound}\;

        \For{$i \assign 1,\ldots,n$\label{algo:lpex:ratrec:forx}}{
          $x^*_i$ \assign \round($(x_k)_i$, $M_k$)\label{algo:lpex:ratrec:x}\;
        }

        \If{$x^*$ exactly feasible in rational arithmetic\label{algo:lpex:ratrec:fory}}{
          \For{$j \assign 1,\ldots,m$}{
            $y^*_j$ \assign \round($(y_k)_j$,$M_k$)\label{algo:lpex:ratrec:y}\;
          }
          \If{$x^*,y^*$ exactly optimal in rational arithmetic}{
            return $x^*,y^*$\;
          }
        }

        \BlankLine
        $k^*$ \assign $\ceil{fk}$\tcc*{counter for next reconstruction}\label{algo:lpex:ratrec:geo}
      }
    }
  }
\end{algorithm}
 
\begin{theorem}\label{lem:lpex:ratrec:conv}   Suppose we are given an LP \eqref{prob:lpeq},
        fixed constants $C \geq 1$, $0 < \epsilon < 1$, $1 < \beta < 1/\epsilon$, and a
  rational limit point~$\tilde{x},\tilde{y}$ with the denominator of each
  component at most~$\tilde{q}$.
  Furthermore, suppose a sequence of primal--dual solutions~$x_k, y_k$ and scaling factors $\scalfac_k \geq 1$ satisfies
  $\maxnorm{(\tilde{x},\tilde{y}) - (x_k,y_k)} \leq C \sum_{i=k+1}^{\infty} \scalfac_i^{-1}$ with $\scalfac_1 = 1$ and $\scalfac_{k} / \scalfac_{k+1} \leq \epsilon$.
  Let $M_k := \sqrt{\scalfac_{k+1} / (2 \beta^k)}$.

  Then there exists $K = K(\tilde{q},C) \in\order(\max\{\size{\tilde{q}},\size{C}\})$ such
  that
  \begin{equation}\label{equ:rrcond}
    \maxnorm{(\tilde{x},\tilde{y}) - (x_k,y_k)} < 1 / (2M_k^2), \; 1 \leq \tilde{q} \leq M_k,
  \end{equation}
  holds for all $k \geq K$, i.e., $\tilde{x},\tilde{y}$ can be reconstructed from
  $x_k,y_k$ componentwise in polynomial time using Theorem~\ref{the:diophantine}.
\end{theorem}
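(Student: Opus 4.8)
The plan is to verify the two inequalities comprising condition~\eqref{equ:rrcond} separately, to extract from each a lower bound on $k$, and to take $K$ to be an integer exceeding both. First I would record the two elementary consequences of the hypotheses $\scalfac_1 = 1$ and $\scalfac_k/\scalfac_{k+1} \leq \epsilon$ that drive the whole argument: by induction on $k$, an exponential lower bound $\scalfac_{k+1} \geq \epsilon^{-k}$, and, since $\scalfac_{i+1}^{-1} \leq \epsilon\,\scalfac_i^{-1}$, a geometric tail estimate $\sum_{i=k+1}^\infty \scalfac_i^{-1} \leq \scalfac_{k+1}^{-1}/(1-\epsilon)$. The conceptual observation I want to keep in view is that the single quantity $M_k$ must play two opposing roles --- large enough to be a valid denominator bound, yet small enough that $1/(2M_k^2)$ still dominates the actual error --- and it is exactly the assumption $1 < \beta < 1/\epsilon$ that lets $M_k$ do both eventually.

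For the error condition $\maxnorm{(\tilde{x},\tilde{y}) - (x_k,y_k)} < 1/(2M_k^2)$, I would substitute the tail estimate into the assumed bound to obtain $\maxnorm{(\tilde{x},\tilde{y}) - (x_k,y_k)} \leq C\,\scalfac_{k+1}^{-1}/(1-\epsilon)$, observe that $1/(2M_k^2) = \beta^k/\scalfac_{k+1}$ directly from the definition $M_k = \sqrt{\scalfac_{k+1}/(2\beta^k)}$, and conclude that the desired strict inequality holds as soon as $\beta^k > C/(1-\epsilon)$. Since $\beta > 1$ is a fixed constant, this is true for every $k > \bigl(\log C - \log(1-\epsilon)\bigr)/\log\beta$, a threshold in $\order(\log C) \subseteq \order(\size{C})$ (using $C \geq 1$).

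For the denominator condition $1 \leq \tilde{q} \leq M_k$, the left inequality is immediate since $\tilde{q}$ bounds denominators, and $\tilde{q} \leq M_k$ is equivalent to $\scalfac_{k+1} \geq 2\beta^k\tilde{q}^2$. Using $\scalfac_{k+1} \geq \epsilon^{-k}$ it suffices to require $(\epsilon\beta)^{-k} \geq 2\tilde{q}^2$; because $\epsilon\beta < 1$ by hypothesis, the left-hand side grows geometrically in $k$, so this holds for all $k \geq \bigl(1 + 2\log\tilde{q}\bigr)/\log\!\bigl(1/(\epsilon\beta)\bigr)$, a threshold in $\order(\log\tilde{q}) \subseteq \order(\size{\tilde{q}})$. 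Taking $K = K(\tilde{q},C)$ to be an integer above the maximum of the two thresholds then gives both parts of~\eqref{equ:rrcond} for all $k \geq K$ and $K \in \order(\max\{\size{\tilde{q}},\size{C}\})$. The reconstruction claim follows componentwise: writing a primal entry as $(\tilde{x})_i = p/q$ in lowest terms with $1 \leq q \leq \tilde{q} \leq M_k$, we get $\abs{(x_k)_i - p/q} \leq \maxnorm{(\tilde{x},\tilde{y})-(x_k,y_k)} < 1/(2M_k^2) \leq 1/(2M_k q)$, so Theorem~\ref{the:diophantine} with $M = M_k$ recovers $p/q$ in polynomial time, and the same applies to every entry of $\tilde{x}$ and $\tilde{y}$.

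I do not expect a real obstacle here; the argument is essentially geometric-series bookkeeping. The one point to be careful about is invoking the right estimate on $\scalfac_{k+1}$ in each half --- the tail bound $\scalfac_{k+1}^{-1}/(1-\epsilon)$ for the error condition and the exponential lower bound $\scalfac_{k+1} \geq \epsilon^{-k}$ for the denominator condition --- and checking that each side of the interval $1 < \beta < 1/\epsilon$ is genuinely used: $\beta > 1$ makes $\beta^k \to \infty$ (so $1/(2M_k^2)$ eventually exceeds the error), while $\beta < 1/\epsilon$ makes $(\epsilon\beta)^{-k} \to \infty$ (so $M_k/\tilde{q} \to \infty$).
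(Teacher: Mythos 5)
Your proof is correct and follows essentially the same route as the paper's: you derive the same geometric tail bound and the same exponential lower bound on $\scalfac_{k+1}$, obtain the same two thresholds $K_1 = (2\log\tilde{q}+1)/\log(1/(\beta\epsilon))$ and $K_2 = (\log C - \log(1-\epsilon))/\log\beta$, and take $K$ to be their maximum. The only difference is cosmetic: you spell out the final componentwise application of Theorem~\ref{the:diophantine} and the role of each side of $1 < \beta < 1/\epsilon$, which the paper leaves implicit.
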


\begin{proof}
    Note that $\scalfac_{k} / \scalfac_{k+1} \leq \epsilon$ for all $k$ implies $\scalfac_{i} \geq \scalfac_{j} \epsilon^{j-i}$ for all $j \leq i$.
    Then $M_k = \sqrt{\scalfac_{k+1} / (2 \beta^k)} \geq \tilde{q}$ holds if $\sqrt{1/(2 \beta^k \epsilon^k)} \geq \tilde{q}$.  This holds for all
    \begin{equation}
    k \geq K_1 := (2\log \tilde{q} + 1) / \log( 1/\beta\epsilon ) \in \order(\size{\tilde{q}}).
  \end{equation}
  Furthermore, $C\sum_{i=k+1}^{\infty} \scalfac_i^{-1} \leq C\sum_{i=k+1}^{\infty} \epsilon^{i-k-1} \scalfac_{k+1}^{-1} = C / ((1-\epsilon) \scalfac_{k+1})$, which is less than $1 / (2M_k^2) = \beta^k / \scalfac_{k+1}$ for all
  \begin{equation}
    k > K_2 := (\log C - \log(1-\epsilon)) / \log\beta \in \order(\size{C}).
  \end{equation}
  Hence \eqref{equ:rrcond} holds for all $k > K := \max\{ K_1, K_2 \}$.
  \qed
\end{proof}

This running time result is output-sensitive as it depends on the encoding length of the
solution.
The value of $C$ is a constant bound on the absolute values in the corrector solutions.
Although $C$ is independent of the input size and does not affect asymptotic
running time, we include it explicitly in order to exhibit the practical
dependency on the corrector solutions returned by the oracle.

We now consider the cost associated with reconstructing the solution vectors.
The cost of applying the standard extended Euclidean algorithm to compute
continued fraction approximations of a rational number with encoding length~$d$ 
is~$O(d^2)$.
Asymptotically faster variants of the extended Euclidean algorithm exist, and can accomplish this goal 
in $O(d \log^2 d \log \log d)$ time~\cite{WangPan2003}, but for simplicity
in our discussion and analysis we use the quadratic bound given above.
The following lemma shows that the total time spent on rational reconstruction
within Algorithm~\ref{algo:lpex:ratrec} is polynomial in the number of refinement rounds
and that if rational reconstruction is applied at a geometric frequency with $f>1$ 
then the total time spent on this task is asymptotically dominated by the
reconstruction of the final solution vector.
We remark that this lemma can be used to show oracle polynomial running time
of the entire algorithm even in the case that $f=1$, but it sheds light on the
fact that choosing $f>1$ can lead to asymptotically improved performance.

\begin{lemma}[Reconstruction of Solution Vectors]\label{lem:lpex:ratrec:reccost}
  The running time of applying rational reconstruction componentwise to $x_k$ and $y_k$
  within the $k$-th round of Algorithm~\ref{algo:lpex:ratrec} is $O((n+m)k^2)$.
  Moreover, if $f>1$ and Algorithm~\ref{algo:lpex:ratrec} terminates at round
  $K$ then the cumulative time spent on rational reconstruction throughout the
  algorithm is $O((n+m){K}^2)$.
\end{lemma}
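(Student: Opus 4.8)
The plan is to bound the cost of the two nested loops in Algorithm~\ref{algo:lpex:ratrec} by first controlling the encoding length of the numbers fed into $\round$, then summing over the refinement rounds where reconstruction is attempted. The first step is to recall from Lemma~\ref{lem:polygrowth} (and the discussion following it) that the entries of $x_k$ and $y_k$ have encoding length $O(\size{A,b,\lb,c} + (n+m)k)$; crucially, since each refinement round advances the scaling factor $\scalfac_{k+1}$ by at most a factor $1/\epsilon$ and the correction adds a term of size $O(\size{\scalfac_{k+1}})$, the per-component encoding length after $k$ rounds is $O(\size{A,b,\lb,c} + k)$, hence $O(k)$ for fixed input. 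Feeding a rational of encoding length $d = O(k)$ into the extended Euclidean algorithm costs $O(d^2) = O(k^2)$ per component by the quadratic bound quoted just before the lemma, so reconstructing all $n$ primal and $m$ dual components in round $k$ costs $O((n+m)k^2)$. This gives the first claim.

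For the second claim I would use the geometric spacing of the reconstruction rounds. Let $k_0 = 1 < k_1 < k_2 < \dots < k_T = K$ be the rounds at which reconstruction is actually executed; by line~\ref{algo:lpex:ratrec:geo} of Algorithm~\ref{algo:lpex:ratrec} we have $k_{j+1} \geq \ceil{f k_j} \geq f k_j$, so $k_j \leq K / f^{\,T-j}$ and hence $k_j^2 \leq K^2 / f^{\,2(T-j)}$. The cumulative reconstruction cost is
\begin{equation}
\sum_{j=0}^{T} O\bigl((n+m)k_j^2\bigr)
\leq O\bigl((n+m)K^2\bigr)\sum_{j=0}^{T} f^{-2(T-j)}
= O\bigl((n+m)K^2\bigr)\sum_{\ell=0}^{T} f^{-2\ell}.
\end{equation}
Since $f > 1$, the series $\sum_{\ell \geq 0} f^{-2\ell} = 1/(1 - f^{-2})$ converges to a constant depending only on $f$, so the whole sum is $O((n+m)K^2)$, which is the desired bound and shows that the cost of the final reconstruction dominates.

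The only mild subtlety — and the step I would treat most carefully — is justifying the $O(k)$ bound on the encoding length of the numbers handed to $\round$ in round $k$. One has to be sure that it is the \emph{rounded} scaling factors (powers of two, line~\ref{algo:sac:scaling2} of Algorithm~\ref{algo:sac}) and the boundedness of the oracle's corrector solutions in $\fpnumbers(\fpdigits)$ that keep the denominators of $x_k, y_k$ from blowing up; this is exactly the content of Lemma~\ref{lem:polygrowth}, so I would simply cite it and extract the linear-in-$k$ dependence. Everything else is the routine geometric-series estimate above, and I would present it in the two displays shown rather than grinding through constants.
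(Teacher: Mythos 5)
Your proof is correct and follows essentially the same route as the paper's: extract the per-component $O(k)$ encoding-length bound from Lemma~\ref{lem:polygrowth}, apply the quadratic bound for the extended Euclidean algorithm to get $O((n+m)k^2)$ per round, and then bound the cumulative cost by noting that line~\ref{algo:lpex:ratrec:geo} forces $k_{j+1} \geq f k_j$, so the reconstruction rounds form a sequence dominated term-wise by $K, K/f, K/f^2,\ldots$, whose squared sum is a convergent geometric series when $f>1$. The one stylistic wrinkle is that your first citation of the bound, ``$O(\size{A,b,\lb,c} + (n+m)k)$,'' is the bound on the total size $\size{x_k}+\size{y_k}$, not the per-entry size; you immediately correct to the per-component bound $O(k)$, which is what the argument actually uses, so this is only a local slip, not a gap.
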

\begin{proof}
From the proof of Lemma~\ref{lem:polygrowth} we know that at the $k$-th refinement round,
the encoding length of components of $x_k,y_k$ is each bounded
by~$(2\alpha k +3 p + 2)$.
The scaling limit $\alpha$ and the precision level $p$ can be considered as constants
and thus the encoding length of each component is $O(k)$.
Together with the fact that the extended Euclidean algorithm can be implemented to
run in quadratic time in the encoding length of its input, the first result is
established.

To show the second claim we assume that $f>1$ and consider the indices of the 
rounds at which reconstruction is attempted, and use $K$ to denote the
final such index.
We observe that the sequence of these indices, listed in decreasing order, is term-wise
bounded above by the following sequence:
$S = (K, \floor{K /f}, \floor{K /f^2},\ldots, \floor{K /f^a})$
where $a=\log_f{K}$.
This observation follows from the fact that line~\ref{algo:lpex:ratrec:geo}, 
of Algorithm~\ref{algo:lpex:ratrec} involves rounding up to the nearest integer.
Since the encoding length of the components of $x_k,y_k$ increase at each round,
so does the cost of rational reconstruction, so by considering the cost to perform
rational reconstruction at indices in the above sequence $S$ we derive an
upper bound on the true cost.
Now, since the encoding length of each component used for reconstruction is linear
in the iteration index, and the cost for reconstruction is quadratic in this value,
we arrive at the following bound on the total cost, using well known properties of 
geometric series:
\begin{align*}
O\left(\sum_{i=0}^a(n+m)\floor{K/f^i}^2\right) &= O\left((n+m)K^2 \sum_{i=0}^a f^{-2i}\right)
=O\left((n+m)K^2 \sum_{i=0}^\infty f^{-2i}\right)\\
&=O\left((n+m)K^2 \frac{1}{1-f^{-2}}\right)
=O((n+m)K^2)
\end{align*}
This establishes the result. $\hfill\qed$
\end{proof}

Now, assuming the conditions laid out in Theorem~\ref{lem:lpex:ratrec:conv} hold we see that
the number of refinements Algorithm~\ref{algo:lpex:ratrec} performs before computing
an exact rational solution is polynomially bounded in the encoding length of the input.
Together with this bound on the number of refinements, Lemma~\ref{lem:lpex:ratrec:reccost}
gives a polynomial bound on the time spent on rational reconstruction.
Lemma~\ref{lem:polygrowth} and the arguments from Theorem~\ref{the:lpir:polytime} still
apply and limit the growth of the numbers and cost of the other intermediate computations.
Taken together, we arrive at the following.

\begin{theorem}   Suppose we are given a primal and dual feasible LP \eqref{prob:lpeq} and a
  limited-precision LP-basis oracle according to Definition~\ref{def:oracle} with
  constants~$\fpdigits$, $\fptol$, and $\sigma$.
  Fix a scaling limit $\alpha \geq 2$ and let $\epsilon := \max\{\fptol,1/\alpha\}$.
  Then Algorithm~\ref{algo:lpex:ratrec} called with $\beta <
  1/\epsilon$ terminates with an optimal solution in
  oracle-polynomial running time.
\end{theorem}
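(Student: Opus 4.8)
The plan is to assemble the statement from the pieces already in place: Lemma~\ref{lem:cauchy} supplies a rational, basic, optimal limit point together with a quantitative error bound, the Hadamard bound~\eqref{equ:hadamardlp} controls its denominators, Theorem~\ref{lem:lpex:ratrec:conv} turns these facts into a polynomial threshold beyond which rational reconstruction provably succeeds, and Lemmas~\ref{lem:polygrowth} and~\ref{lem:lpex:ratrec:reccost} together with the argument of Theorem~\ref{the:lpir:polytime} bound all remaining work.

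First I would verify the hypotheses of Lemma~\ref{lem:cauchy} and Theorem~\ref{lem:lpex:ratrec:conv} for a run of Algorithm~\ref{algo:lpex:ratrec}. Since this algorithm simply executes the refinement step of Algorithm~\ref{algo:sac} (initialized with $\termtol=0$), the sequences $x_k,y_k,\scalfac_k$ coincide with those of Algorithm~\ref{algo:sac}; by Lemma~\ref{lem:convrate} they satisfy $\scalfac_1 = 1$, $\scalfac_k\geq 1$, and $\scalfac_k/\scalfac_{k+1}\leq\epsilon$, and by Lemma~\ref{lem:cauchy} they converge to a rational, basic, optimal solution $(\tilde x,\tilde y)$ of~\eqref{prob:lpeq} with $\maxnorm{(\tilde x,\tilde y)-(x_k,y_k)}\leq C\sum_{i=k+1}^{\infty}\scalfac_i^{-1}$ for the constant $C:=2^\fpdigits\geq 1$. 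Here $\epsilon=\max\{\fptol,1/\alpha\}$ lies in $(0,1)$ since $0<\fptol<1$ and $\alpha\geq 2$, and $\beta$ obeys $1<\beta<1/\epsilon$ by the parameter constraints and the theorem's hypothesis. Finally, because $(\tilde x,\tilde y)$ is a basic primal--dual solution, the denominators of its components are bounded by the integer $\tilde q:=\ceil{H}$ with $H$ from~\eqref{equ:hadamardlp}, and $\size{\tilde q}$ is polynomial in $\size{A,b,\lb,c}$.

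Next I would apply Theorem~\ref{lem:lpex:ratrec:conv} with these constants and $M_k=\sqrt{\scalfac_{k+1}/(2\beta^k)}$: it yields a threshold $K=K(\tilde q,C)\in\order(\max\{\size{\tilde q},\size{C}\})$, hence polynomial in the input size since $C$ is constant, such that~\eqref{equ:rrcond} holds for all $k\geq K$. At any round $k\geq K$ where reconstruction is attempted, the reconstruction routine applied componentwise to $x_k$ and $y_k$ with denominator bound $M_k$ recovers the exact entries of $\tilde x,\tilde y$: writing $\tilde x_i=p_i/q_i$ with $q_i\leq\tilde q\leq M_k$ gives $\abs{(x_k)_i-p_i/q_i}<1/(2M_k^2)\leq 1/(2M_kq_i)$, so $p_i/q_i$ is the unique number Theorem~\ref{the:diophantine} can return, and similarly for $\tilde y$. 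Since $(\tilde x,\tilde y)$ is feasible and optimal, the exact rational checks starting at line~\ref{algo:lpex:ratrec:fory} succeed and the algorithm returns $(\tilde x,\tilde y)$. To bound the first such round, note that the rounds at which reconstruction is attempted, $1=k_0<k_1<\cdots$, obey $k_{j+1}=\ceil{fk_j}$, so the first $k_j\geq K$ satisfies $k_j=\ceil{fk_{j-1}}\leq\ceil{fK}$; hence Algorithm~\ref{algo:lpex:ratrec} terminates by round $\order(fK)$, polynomial in the input size.

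Finally I would tally the running time. By Lemma~\ref{lem:polygrowth} every number encountered has polynomially bounded encoding length, so exactly as in the proof of Theorem~\ref{the:lpir:polytime} the initialization and each of the $\order(fK)$ refinement rounds cost a polynomial number of bit operations plus one oracle call; by Lemma~\ref{lem:lpex:ratrec:reccost} the cumulative cost of all rational reconstruction attempts is $\order((n+m)K^2)$; and each reconstruction round performs only a constant number of rational matrix--vector products of polynomial size for the exact feasibility and optimality checks. Summing over the $\order(fK)$ rounds gives oracle-polynomial running time, and correctness of the output follows from the reconstruction argument above. I expect the one non-mechanical point to be confirming that the geometric reconstruction schedule of line~\ref{algo:lpex:ratrec:geo} cannot postpone termination beyond a polynomial; the geometric-growth bound $k_j\leq\ceil{fK}$ settles this, and the remainder is bookkeeping over bounds already established.
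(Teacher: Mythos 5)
Your proposal is correct and takes essentially the same approach as the paper's own (informal) justification: invoke Lemma~\ref{lem:cauchy} to obtain a rational basic limit point with the quantitative Cauchy bound, use the Hadamard bound~\eqref{equ:hadamardlp} to control its denominators, apply Theorem~\ref{lem:lpex:ratrec:conv} to get the polynomial threshold $K$ after which reconstruction must succeed, and combine Lemma~\ref{lem:polygrowth}, the argument of Theorem~\ref{the:lpir:polytime}, and Lemma~\ref{lem:lpex:ratrec:reccost} to bound the total work. You are in fact slightly more careful than the paper in explicitly bounding the first reconstruction round past $K$ by $\ceil{fK}$, a detail the paper leaves implicit.
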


Note that the basis does not need to be known explicitly.
Accordingly, Algorithm~\ref{algo:lpex:ratrec} may even return an optimal
solution~$x^*,y^*$ that is different from the limit point~$\tilde{x},\tilde{y}$ if
it is discovered by rational reconstruction at an early iterate~$x_k,y_k$.
In this case, $x^*,y^*$ is not guaranteed to be a basic solution unless one
explicitly discards solutions that are not basic during the optimality checks.
 \section{Proofs}
\label{sec:proofs}

This section collects some technical proofs for previous results and the
necessary background material including well-known inequalities regarding
encoding lengths, norms, and systems of equations.

\subsection{Properties of Encoding Lengths}

\begin{lemma}\label{equ:preformulas}
  For~$z_1,\ldots,z_n\in\Z$ and~$r_1,\ldots,r_n\in\Q$,
  \begin{align}
    \size{z_1 + \ldots + z_n} &\leq \size{z_1} + \ldots + \size{z_n},\\
    \size{r_1 \cdot \ldots \cdot r_n} &\leq \size{r_1} + \ldots + \size{r_n},\\
    \size{r_1 + \ldots + r_n} &\leq 2 (\size{r_1} + \ldots + \size{r_n}).\label{lem:intro:sumsize}\\
    \intertext{For matrices~$A\in\Q^{m\times n}$, $B\in\Q^{n\times p}$, $D\in\Q^{n\times n}$,}
    \size{AB} &\leq 2(p\size{A} + m\size{B}),\label{lem:intro:matmulsize}\\
    \size{\det{D}} &\leq 2\size{D} -n^2.\label{lem:intro:detsize}
  \end{align}
\end{lemma}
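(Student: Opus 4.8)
The plan is to read off all five inequalities from the definition $\size{n} = 1 + \ceil{\log(\abs{n}+1)}$, using two elementary facts repeatedly: (i) $\abs{n}+1 \le 2^{\size{n}-1}$ for every integer $n$, so that any value-level bound of the form $\abs{n}+1 \le 2^{S}$ with $S\in\Z$ yields $\size{n} \le S+1$; and (ii) the super-multiplicativity $\prod_i(1+a_i) \ge 1 + \prod_i a_i \ge 1 + \sum_i a_i$ for nonnegative reals $a_i$. I would also use that if a nonzero integer divides another then its size is no larger, and that the reduced numerator and denominator of a rational divide those of any representation; together with the convention $\size{p/q}=\size{p}+\size{q}$, this lets me bound $\size{r}$ by the sizes of the numerator and denominator of any fraction equal to $r$. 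With these in hand the integer cases are immediate: $\abs{z_1+\dots+z_n}+1 \le \sum_i(\abs{z_i}+1) \le \sum_i 2^{\size{z_i}-1} \le 2^{\sum_i(\size{z_i}-1)}$ (the last step since every exponent is at least $1$), so by (i) $\size{z_1+\dots+z_n}\le 1+\sum_i\size{z_i}-n\le\sum_i\size{z_i}$; the same computation with $\prod_i(\abs{z_i}+1)$ in place of $\sum_i(\abs{z_i}+1)$ gives $\size{z_1\cdots z_n}\le\sum_i\size{z_i}-n+1$. The rational product bound follows by writing $r_i=p_i/q_i$ in lowest terms: the reduced numerator of $\prod r_i$ divides $\prod p_i$ and the reduced denominator divides $\prod q_i$, so $\size{\prod_i r_i}\le\size{\prod_i p_i}+\size{\prod_i q_i}\le\sum_i(\size{p_i}+\size{q_i})=\sum_i\size{r_i}$.

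The first genuinely delicate point is the rational sum~\eqref{lem:intro:sumsize}, since the naive estimate over the common denominator $\prod_i q_i$ leaves a spurious factor $n-1$ on the $\size{q_j}$. Writing $r_i=p_i/q_i$ in lowest terms, the sum is $N/D$ with $D:=\prod_i q_i$ and $N:=\sum_i p_i\prod_{j\ne i}q_j$; the reduced denominator divides $D$, hence has size at most $\size{D}\le\sum_i\size{q_i}-n+1$. For the numerator I would work at the value level: $\abs{N}\le\sum_i\abs{p_i}\prod_j q_j$, so by (ii), $\abs{N}+1 \le \big(1+\sum_i\abs{p_i}\big)\prod_j q_j \le \prod_j(1+\abs{p_j})\prod_j q_j \le \prod_j(1+\abs{p_j})(1+q_j) \le 2^{\sum_j(\size{p_j}+\size{q_j})-2n}$, whence $\size{N}\le\sum_j\size{r_j}-2n+1$. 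Adding the two bounds and using $\size{q_j}\le\size{r_j}$ gives $\size{\sum_j r_j}\le 2\sum_j\size{r_j}-3n+2\le 2\sum_j\size{r_j}$. The matrix product~\eqref{lem:intro:matmulsize} is then an entrywise corollary: $(AB)_{ik}=\sum_j A_{ij}B_{jk}$ is a sum of $n$ rationals, so by the rational sum and product bounds $\size{(AB)_{ik}}\le 2\sum_j(\size{A_{ij}}+\size{B_{jk}})$; summing over $i$ and $k$ counts each $\size{A_{ij}}$ exactly $p$ times and each $\size{B_{jk}}$ exactly $m$ times, giving $\size{AB}\le 2(p\size{A}+m\size{B})$.

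The step I expect to be the real obstacle is the determinant bound~\eqref{lem:intro:detsize}: expanding by the Leibniz formula and applying the rational-sum bound term by term is hopeless, introducing a factor on the order of $n!$. Instead I would separate numerator and denominator of $\det D$. Scaling row $i$ of $D$ by $\Delta_i:=\operatorname{lcm}_j(b_{ij})$ makes that row integral and multiplies the determinant by $\prod_i\Delta_i$; hence the reduced denominator of $\det D$ divides $\prod_i\Delta_i$, and since $\Delta_i$ divides $\prod_j b_{ij}$ we get $\size{\prod_i\Delta_i}\le\sum_i\size{\Delta_i}\le\sum_{i,j}\size{b_{ij}}\le\size{D}$. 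For the integer $N:=(\prod_i\Delta_i)\det D$, Hadamard's inequality together with $\norm{\cdot}_2\le\norm{\cdot}_1$ gives $\abs{N}\le\prod_i\big(\Delta_i\norm{D_{i\cdot}}_1\big)$; since $\Delta_i/b_{ij}\in\Z$, the $i$-th factor equals $\sum_j\abs{a_{ij}}(\Delta_i/b_{ij})$, and the expansion trick (ii) again yields $\Delta_i\norm{D_{i\cdot}}_1+1\le\Delta_i\prod_j(1+\abs{a_{ij}}/b_{ij})\le\prod_j(1+\abs{a_{ij}})(1+b_{ij})$, using $\Delta_i\le\prod_j b_{ij}$ and $b_{ij}+\abs{a_{ij}}\le(1+\abs{a_{ij}})(1+b_{ij})$. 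Multiplying over $i$ (again via (ii)) gives $\abs{N}+1\le\prod_{i,j}(1+\abs{a_{ij}})(1+b_{ij})\le 2^{\size{D}-2n^2}$, hence $\size{N}\le\size{D}-2n^2+1$. Combining, $\size{\det D}\le\size{N}+\size{\prod_i\Delta_i}\le 2\size{D}-2n^2+1\le 2\size{D}-n^2$ for $n\ge1$.

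In short, after one sets up the correct value-level estimate the remaining arithmetic is routine; the content lies in arranging, for the rational sum and for the determinant, a bound of the shape ``$\abs{N}+1\le$ a product over all entries'' that survives taking logarithms without picking up an extra dimension-dependent factor. The determinant is where this is hardest, because the honest common denominator is $\prod_i\Delta_i$ rather than $\prod_{i,j}b_{ij}$ and the numerator must be controlled through Hadamard's inequality rather than the triangle inequality.
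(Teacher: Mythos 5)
The paper does not actually supply its own proof of this lemma: it simply cites Gr\"otschel, Lov\'asz, and Schrijver (Lemma 1.3.3, Lemma 1.3.4, and Exercise 1.3.5) and remarks that the factor~$2$ in~\eqref{lem:intro:sumsize} is best possible. Your write-up fills that gap with a correct, self-contained derivation in the same spirit as the GLS argument: everything is driven by the value-level inequality $\abs{n}+1\le 2^{\size{n}-1}$, the rational-sum bound is obtained by splitting into numerator and denominator over the common denominator $\prod_j q_j$, the matrix-product bound is a clean entrywise corollary (the double-counting that turns $\sum_{i,k,j}$ into $p\size{A}+m\size{B}$ is exactly right), and the determinant bound correctly avoids the Leibniz-expansion trap by separating the integer denominator $\prod_i\Delta_i$ from the integer numerator and controlling the latter via Hadamard together with $\norm{\cdot}_2\le\norm{\cdot}_1$. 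I verified the arithmetic: the chain for the rational sum gives $\size{\sum_j r_j}\le 2\sum_j\size{r_j}-3n+2$ and the determinant chain gives $\size{\det D}\le 2\size{D}-2n^2+1$, both of which imply the stated bounds.

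One small technical caveat you should make explicit: in the integer-sum step you use $\sum_i 2^{\size{z_i}-1}\le 2^{\sum_i(\size{z_i}-1)}$ and note that this needs every exponent $\ge 1$, i.e.\ every $z_i\neq 0$. As written the chain is actually false if some $z_i=0$ (already for $z_1=0,z_2=1$ the middle step reads $3\le 2$). This is harmless---drop the zero summands first, which leaves the left-hand side unchanged and can only decrease the right-hand side, and treat the all-zero case separately---but you should say so rather than leave the reduction implicit, since the same convention ($\size{0}=1$, so $\size{z}-1$ can vanish) recurs in the later parts of the proof.
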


\begin{proof}
  See \cite[Lemma 1.3.3, 1.3.4, and Exercise~1.3.5]{GroetschelLovaszSchrijver1988}.
  Note that the factor 2 in~\eqref{lem:intro:sumsize} is best possible.
  \qed
\end{proof}

\begin{lemma}\label{lem:intro:normbound}
  For any matrix~$A\in\Q^{m\times n}$,
  \begin{equation}\label{equ:intro:normbound}
    \maxnorm{A} \leq 2^{\size{A} - mn} - mn \leq 2^{\size{A}}.
  \end{equation}
\end{lemma}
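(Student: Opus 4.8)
The plan is to reduce the matrix inequality to a one‑dimensional statement about the $\ell_1$‑norm of a rational vector, and then obtain the row‑sum norm simply by treating $A$ as a vector of its $mn$ entries. This is the reason the exponent comes out as $\size{A}-mn$ rather than merely $\size{A}-n$: one bounds the largest row sum by the sum of \emph{all} entries.

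First I would record the per‑entry estimate: for a rational $r=p/q$ with $p\in\Z$, $q\in\nonneg\Z$, the definition $\size{p}=1+\ceil{\log(\abs{p}+1)}$ gives $\abs{p}+1\leq 2^{\size{p}-1}$, hence $\abs{p}\leq 2^{\size{p}-1}-1$; since $q\geq 1$ we have $\size{q}\geq\size{1}=2$, so $\size{p}=\size{r}-\size{q}\leq\size{r}-2$. Combining, $\abs{r}\leq\abs{p}\leq 2^{\size{r}-1}-1$, and in particular $\size{r}\geq 3$, so the exponent $\size{r}-1$ is at least $2$.

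Next I would prove the vector bound: for $v\in\Q^{k}$, $\sum_{i=1}^{k}\abs{v_i}\leq 2^{\size{v}-k}-k$. Set $X_i:=2^{\size{v_i}-1}\geq 4$. The per‑entry estimate gives $\sum_i\abs{v_i}\leq\sum_i(X_i-1)$, and additivity of $\size{\cdot}$ over the components gives $2^{\size{v}-k}=\prod_i X_i$, so it suffices to show $\sum_i X_i\leq\prod_i X_i$ whenever all $X_i\geq 2$. This is a routine induction on $k$: the inductive step amounts to $X_{k+1}\big(\prod_{i\leq k}X_i-1\big)\geq\sum_{i\leq k}X_i$, which follows from $X_{k+1}\geq 2$, the elementary inequality $2(P-1)\geq P$ valid for $P:=\prod_{i\leq k}X_i\geq 2$, and the induction hypothesis $P\geq\sum_{i\leq k}X_i$.

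Finally, the matrix statement follows by applying the vector bound to the vector of all $mn$ entries of $A$. Since every row sum is nonnegative, $\maxnorm{A}=\max_i\sum_j\abs{A_{ij}}\leq\sum_{i,j}\abs{A_{ij}}$, and because $\size{A}=\sum_{i,j}\size{A_{ij}}$ the vector bound yields $\sum_{i,j}\abs{A_{ij}}\leq 2^{\size{A}-mn}-mn$; the remaining inequality $2^{\size{A}-mn}-mn\leq 2^{\size{A}}$ is immediate from $mn\geq 0$. I do not expect a genuine obstacle here; the only point needing mild care is exactly the bookkeeping of the $mn$ factor — which the vectorization handles automatically, although one could alternatively work from a single‑row estimate and charge the extra $(m-1)n$ to the sizes of the other rows, using $\size{A_{i'\cdot}}\geq 3n$ for each $i'\neq i$.
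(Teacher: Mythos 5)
Your proof is correct and follows the same overall strategy as the paper---bound the maximal row sum by the total sum $\sum_{i,j}\abs{A_{ij}}$ and then pass from a sum of per-entry bounds to a product---but your version is actually more careful than the one printed. The paper's chain passes through the step
\begin{equation*}
\log\Big(\sum_{i,j}(\abs{A_{ij}}+1)\Big)\;\leq\;\sum_{i,j}\log(\abs{A_{ij}}+1),
\end{equation*}
i.e.\ $\sum_{i,j}(\abs{A_{ij}}+1)\leq\prod_{i,j}(\abs{A_{ij}}+1)$, which needs every factor $\abs{A_{ij}}+1$ to be at least~$2$ and therefore can fail as soon as some entry satisfies $\abs{A_{ij}}<1$ (for instance, for the $1\times 2$ matrix $(0\;\;5)$ one gets $\log 7\not\leq\log 6$). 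Your substitution of $X_i:=2^{\size{v_i}-1}$, together with the observation that every rational has $\size{v_i}\geq 3$ and hence $X_i\geq 4$, performs the same sum-to-product passage but on quantities guaranteed to exceed~$2$, which is exactly what your induction $\sum_iX_i\leq\prod_iX_i$ needs. The remaining bookkeeping---the $mn$ offset coming from $\abs{v_i}\leq X_i-1$, the identity $\prod_iX_i=2^{\size{A}-mn}$, and the trivial second inequality---matches the paper. In short: same route, but yours quietly repairs a small gap in the published argument by replacing $\abs{A_{ij}}+1$ with $2^{\size{A_{ij}}-1}$ before invoking the sum-versus-product inequality.
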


\begin{proof}
  \begin{align*}
    \norm{A}_\infty
    &= \max_{i=1,\ldots,m} \sum_{j=1,\ldots,n} \abs{A_{ij}}
    \leq \sum_{i,j} \abs{A_{ij}}
    = (\sum_{i,j} (\abs{A_{ij}} + 1)) - mn\\
    &= 2^{\log(\sum_{i,j} (\abs{A_{ij}} + 1))} - mn
    \leq 2^{\sum_{i,j} \log(\abs{A_{ij}} + 1)} - mn\\
    &= 2^{(\sum_{i,j} \size{A_{ij}}) - mn} - mn
    = 2^{\size{A} - mn} - mn.
  \end{align*}
  \qed
\end{proof}

\begin{lemma}\label{lem:intro:cramer}
  For any nonsingular matrix~$A\in\Q^{n\times n}$, right-hand side~$b\in\Q^n$,
  and solution vector~$x$ of $Ax = b$,
  \begin{align}
    \size{x_i} \leq 4\size{A,b} - 2n^2 \leq 4\size{A,b}.\label{lem:intro:xentry}
    \intertext{for all $i=1,\ldots,n$.  Furthermore,}
    \size{A^{-1}} \leq 4n^2\size{A} - 2n^4 \leq 4n^2\size{A}.\label{lem:intro:inv}
  \end{align}
\end{lemma}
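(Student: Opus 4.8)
The plan is to obtain both bounds from Cramer's rule, the determinant estimate~\eqref{lem:intro:detsize}, and the elementary size inequalities of Lemma~\ref{equ:preformulas}. The one auxiliary observation I will use repeatedly is that for rationals $u,v$ with $v\neq 0$ one has $\size{u/v}\leq\size{u}+\size{v}$: since $\size{1/v}=\size{v}$, this is just the product bound $\size{r_1 r_2}\leq\size{r_1}+\size{r_2}$ of Lemma~\ref{equ:preformulas} applied to $u/v=u\cdot(1/v)$. This needs no coprimality assumption, so it can be applied directly to ratios of determinants without any bookkeeping about reduced fractions.

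For the bound~\eqref{lem:intro:xentry} on $\size{x_i}$, Cramer's rule gives $x_i=\det(A_i)/\det(A)$, where $A_i$ denotes $A$ with its $i$-th column replaced by $b$ (well defined since $A$ is nonsingular). Hence $\size{x_i}\leq\size{\det(A_i)}+\size{\det(A)}$, and applying~\eqref{lem:intro:detsize} to these two $n\times n$ matrices gives $\size{x_i}\leq 2\size{A_i}+2\size{A}-2n^2$. Since $A_i$ consists of the columns of $A$ other than the $i$-th together with $b$, we have $\size{A_i}=\size{A}-\size{A_{\cdot i}}+\size{b}\leq\size{A,b}$, and trivially $\size{A}\leq\size{A,b}$; substituting yields $\size{x_i}\leq 4\size{A,b}-2n^2$.

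For the bound~\eqref{lem:intro:inv} on $\size{A^{-1}}$, I would use the cofactor formula $(A^{-1})_{ij}=(-1)^{i+j}\det(M_{ji})/\det(A)$, where $M_{ji}\in\Q^{(n-1)\times(n-1)}$ arises from $A$ by deleting row $j$ and column $i$ (the case $n=1$ being handled separately and trivially). As before, $\size{(A^{-1})_{ij}}\leq\size{\det(M_{ji})}+\size{\det(A)}\leq 2\size{M_{ji}}-(n-1)^2+2\size{A}-n^2$. Summing over all $n^2$ index pairs and using the counting identity $\sum_{i,j}\size{M_{ji}}=(n-1)^2\size{A}$ --- each entry of $A$ survives in exactly $(n-1)^2$ of the minors $M_{ji}$ --- gives $\size{A^{-1}}\leq\bigl(2(n-1)^2+2n^2\bigr)\size{A}-n^2(n-1)^2-n^4$. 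To put this into the stated form, observe that the claimed inequality $\size{A^{-1}}\leq 4n^2\size{A}-2n^4$ is, after cancelling the common positive factor $2n-1$ coming from $n^2-(n-1)^2$, equivalent to $n^2\leq 2\size{A}$, which holds by the standing bound $\size{A}\geq nm=n^2$.

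The main obstacle, such as it is, lies in that last step: the negative constant produced by the crude summation, $-n^2(n-1)^2-n^4$, falls short of the target $-2n^4$, so one must recover the deficit from the slack in the leading coefficient (namely $2(n-1)^2+2n^2<4n^2$) together with $\size{A}\geq n^2$. There is no circularity here, but it is the one point at which the clean bound $\size{A^{-1}}\leq 4n^2\size{A}$ and the sharper form carrying the $-2n^4$ term genuinely require different care; everything else is routine manipulation of the size inequalities already established in Lemma~\ref{equ:preformulas}.
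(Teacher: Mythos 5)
Your derivation of the bound on $\size{x_i}$ is essentially the same as the paper's: Cramer's rule, the determinant size estimate~\eqref{lem:intro:detsize}, and the quotient bound $\size{u/v}\leq\size{u}+\size{v}$ applied to two submatrices of $(A,b)$.

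For the bound on $\size{A^{-1}}$, however, you take a genuinely different route, and both are correct. The paper observes that the $i$-th column of $A^{-1}$ solves $Ax=e_i$ and that any $n\times n$ submatrix of $(A,I_n)$ has size at most $\size{A}$ (because each unit vector column has size $n+1$, which for a nonsingular $A$ is at most the size of the column of $A$ it replaces), so the first bound can be reused verbatim to get a uniform per-entry estimate $4\size{A}-2n^2$, and summing over the $n^2$ entries gives $4n^2\size{A}-2n^4$ directly. You instead expand via the $(n-1)\times(n-1)$ cofactor minors $M_{ji}$, invoke the counting identity $\sum_{i,j}\size{M_{ji}}=(n-1)^2\size{A}$, and recover the target constant $-2n^4$ from the slack in the leading coefficient together with $\size{A}\geq n^2$; a check confirms that the difference between the target bound and your derived bound is exactly $(2n-1)(2\size{A}-n^2)\geq 0$, so the cleanup step is sound. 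Your route costs a bit more algebra and requires handling $n=1$ separately, and it in fact yields a slightly sharper intermediate estimate than the stated one; the paper's route is shorter but leans on the somewhat subtle fact that replacing a column of a nonsingular $A$ by a unit vector does not increase the encoding length. Either argument does the job.
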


\begin{proof}
  By Cramer's rule, the entries of~$x$ are quotients of determinants of
  submatrices of~$(A,b)$.
  With~\eqref{lem:intro:detsize} this yields \eqref{lem:intro:xentry}.
  For the second inequality note that the $i$-th column of~$A^{-1}$ is the
  solution of~$Ax = e_i$.
  Using Cramer's rule again and the fact that submatrices of $(A,I_n)$ have size
  at most~$\size{A}$ gives inequality~\eqref{lem:intro:inv}.
  \qed
\end{proof}

\subsection{Results from Section~\ref{sec:ir}}
\label{subsec:proofs2}

\noindent
\textbf{Proof of Lemma~\ref{lem:polygrowth}}\quad
  Lines~\ref{algo:sac:scaling1} and~\ref{algo:sac:scaling2} of
  Algorithm~\ref{algo:sac} ensure that $\scalfac_k \leq 2^{\ceil{\log\alpha}
    (k-1)}$ holds at iteration~$k$.
  This can be shown inductively.
  For $k=1$, $\Delta_k = 1 = 2^{\ceil{\log\alpha} (k-1)}$ holds by initialization.
  For $k+1$,
  \begin{align*}
    \Delta_{k+1} = 2^{\ceil{\log(1/\delta_k)} }
    &\leq 2^{\ceil{\log(\alpha\scalfac_k)}}
    \leq 2^{\ceil{\log\alpha + \log\scalfac_k}}
    \leq 2^{\ceil{\log\alpha + \ceil{\log\alpha} (k-1)}}\\
    &\leq 2^{\ceil{\ceil{\log\alpha} k}}
    = 2^{\ceil{\log\alpha} k}.
  \end{align*}
  As a result, $\size{\scalfac_k} \leq \ceil{\log\alpha} k$.

  Furthermore, by induction from line~\ref{algo:sac:correct}, the entries in the refined solution vectors
  $x_k$ and $y_k$ have the form
  \begin{align}
    \sum_{j = 1}^k {\scalfac_j}^{-1} \frac{n_j}{2^\fpdigits}
  \end{align}
  with~$n_j \in\Z$, $\abs{n_j} \leq 2^{2\fpdigits}$, for~$j =
  1,\ldots,k$.
  \newcommand{\logalpha}{a}
  With $\scalpow_j := \log(\scalfac_j)$ and $\logalpha := \ceil{\log(\alpha)}$ this can be rewritten as
  \begin{align}
    \sum_{j = 1}^k 2^{-\scalpow_j} \frac{n_j}{2^\fpdigits} =
    \big( \sum_{j = 1}^k n_j 2^{\logalpha(k-1) - \scalpow_j} \big) / 2^{\fpdigits + \logalpha(k-1)}.
  \end{align}
  The latter is a fraction with integer numerator and denominator.  The numerator is bounded as follows:
  \begin{equation}
  \begin{aligned}
    \bigabs{ \sum_{j = 1}^k n_j 2^{\logalpha(k-1) - \scalpow_j} }
    &\leq \sum_{j = 1}^k \abs{n_j} 2^{\logalpha(k-1) - \scalpow_j}
    \leq 2^{2\fpdigits} \sum_{i=0}^{\logalpha(k-1)} 2^i\\
    &\leq 2^{2\fpdigits} ( 2^{\logalpha(k-1) + 1} - 1 )
    \leq 2^{2\fpdigits + \logalpha k} - 1.
  \end{aligned}
  \end{equation}
  Hence, the size of the entries of $x_k$ and $y_k$ grows only linearly with the number of iterations~$k$,
  \begin{equation}
  \begin{aligned}
    \size{x_k} + \size{y_k}
    &\leq (n+m) \Big(\size{\sum_{j = 1}^k n_j 2^{\logalpha(k-1) - \scalpow_j}} + \size{2^{\fpdigits + \logalpha(k-1)}}\Big)\\
    &\leq (n+m) \Big( 2 + \ceil{\log(2^{2\fpdigits + \logalpha k})} + \ceil{\log(2^{\fpdigits + \logalpha(k-1)} + 1)} \Big)\\
    &\leq (n+m) (2 \logalpha k + 3\fpdigits + 2).
  \end{aligned}
  \end{equation}
  The size of the remaining numbers set at iteration $k$ are bounded accordingly,
  \begin{align}
    \size{\hat b} &\leq 4(\size{b} + \size{A} + \size{x_k}),\\
    \size{\hat \lb} &\leq 2(\size{\lb} + \size{x_k}),\\
    \size{\hat c} &\leq 4(\size{c} + \size{A} + \size{y_k}),\\
    \size{\delta_k} &\leq \max\{ \size{\hat b}, \size{\hat \lb}, \size{\hat c}, 2(\size{\hat\lb} + \size{\hat c}), \size{\alpha}\size{\scalfac_k} \}.
  \end{align}
  By Lemma~\ref{lem:convrate}, the maximum number of iterations
  is~$\order(\log(1/\termtol)) = \order(\size{\termtol})$.
  To summarize, the encoding length of any of the numbers encountered during the course
  of the algorithm is $\order(\size{A,b,\lb,c}+ (n+m)\size{\termtol})$.
  \qed

\subsection{Results from Section~\ref{sec:oracle}}
\label{subsec:proofs3}

\noindent
\textbf{Proof of Lemma~\ref{lem:lpex:basisviolation}}\quad
  Let $x,y$ be a basic primal--dual solution with respect to\ some basis~$\B$.
  Let $\N = \{1,\ldots,n\} \without \B$, let~$B = A_{\cdot\B}$ be the
  corresponding (square, non-singular) basis matrix and $N = A_{\cdot\N}$ the
  matrix formed by the nonbasic columns.
  Then the primal solution is given as solution of
  \begin{align}
    \underbrace{\left(\begin{matrix}
      B & N \\ 0 & \unitmatrix_{n - m}
    \end{matrix}\right)}_{\enifed \tilde B \in \Q^{n\times n}}
    \left(\begin{matrix}
      x_\B \\ x_\N
    \end{matrix}\right)
    &= \underbrace{\left(\begin{matrix}
      b \\ \lb_\N
    \end{matrix}\right)}_{\enifed \tilde b \in\Q^n}.\label{equ:lpex:primalrowbasis}
    \intertext{The dual vector~$y$ is determined by}
    \tilde B^\T \left(\begin{matrix}
      y \\ z
    \end{matrix}\right)
    &= c\label{equ:lpex:dualrowbasis}
  \end{align}
  together with the vector $z\in\Q^{n-m}$ containing the dual slacks of the
  nonbasic variables.

  First, primal infeasibilities can only stem from violations of the bound
  constraints on basic variables since the equality constraints and the nonbasic
  bounds are satisfied by construction.
  Hence, they are of the form~$\abs{x_i - \lb_i}$ for some $i\in\B$.
  From Lemma~\ref{lem:intro:cramer} applied to~\eqref{equ:lpex:primalrowbasis}
  we know that the entries in $x_\B$ have size at most~$4\size{\tilde B,\tilde b} -
  2n^2$.
  Because
  \begin{align*}
    \size{\tilde B, \tilde b} &= \size{B} + \size{N} + \size{0} + \size{I_{n-m}} + \size{b}+ \size{\lb_\N}\\
    &= \size{A,b,\lb_\N} + (n + 1)(n - m)\\
    &\leq \size{A,b,\lb} + (n + 1)(n - m),
  \end{align*}
  it follows that all nonzero entries of $x_\B$ are of form $p/q$,
  $p\in\Z,q\in\nonneg\Z$, with
  \begin{align*}
    q \leq 2^{4\size{A,b,\lb} + 4(n + 1)(n - m) - 2n^2} \leq 2^{4\size{A,b,\lb} + 2n^2 + 4n}.
  \end{align*}
  The entries in $\lb$ can be written as $p'/q'$, $p'\in\Z,q'\in\nonneg\Z$, with
  $q' \leq 2^{\size{\lb}}$.
  Combining this we know for all nonzero primal violations
  \begin{align*}
    \abs{x_i - \lb_i}
    &= \bigabs{\frac{p}{q} - \frac{p'}{q'}}
    = \frac{\abs{pq' - p'q}}{qq'}\\
    &\geq 1 / (2^{4\size{A,b,\lb} + 2n^2 + 4n} \cdot 2^{\size{\lb}})
    = 1 / 2^{4\size{A,b} + 5\size{\lb} + 2n^2 + 4n}.
  \end{align*}

  Second, note that dual infeasibilities are precisely the (absolute values of
  the) negative entries of $z$ in~\eqref{equ:lpex:dualrowbasis}.
  Again from Lemma~\ref{lem:intro:cramer} we have
  \begin{align*}
    \size{z_j}
    &\leq 4\size{\tilde B, c} - 2n^2\\
    &\leq 4\size{A,c} + 4(n + 1)(n - m) - 2n^2\\
    &\leq 4\size{A,c} + 2n^2 + 4n
  \end{align*}
  for all~$j \in \N$, and so any nonzero dual violation is at least~$1 /
  2^{4\size{A, c} + 2n^2 + 4n}$.
  \qed
\bigskip

\noindent
\textbf{Proof of Theorem~\ref{the:lpex:optconv}}\quad
  The derivations presented in the following all refer to objects at one fixed
  iteration~$k$.
  Hence, the iteration index is dropped for better readability, i.e., we write
  $x$ for $x_k$ and $y$ for $y_k$, and $\B$ for $\B_k$.
  Furthermore, define $\tilde x, \tilde y$ to be the exact basic solution vectors
  corresponding to current basis $\B$, let~$\N = \{1,\ldots,n\} \without \B$, and
  let~$B = A_{\cdot\B}$ and~$N = A_{\cdot\N}$.
  We will show that for a sufficiently large $k$, the primal and dual violations
  of the exact basic solution vectors drop below the minimum infeasibility
  thresholds from Lemma~\ref{lem:lpex:basisviolation}.

  By construction, the basic solution~$\tilde x$ satisfies the equality
  constraints $Ax = b$ exactly.
  For the violation of the lower bounds, we first show that~$x$
  and~$\tilde x$ converge towards each other.

  \begin{claim}{Claim 1}
    At iteration $k = 1,2,\ldots$, $\maxnorm{x - \tilde x} \leq
    2^{4m^2\size{A} + 1} \epsilon^k$.
  \end{claim}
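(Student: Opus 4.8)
The plan is to bound the basic and nonbasic parts of $d := x - \tilde x$ separately, using that $\tilde x$ agrees with the lower bounds off $\B$ and satisfies $Ax=b$ exactly. Write $\N := \{1,\ldots,n\}\without\B$. On $\N$ we have $\tilde x_i = \lb_i$, so $d_i = x_i - \lb_i$ and hence $\abs{d_i}\leq\epsilon^k$ directly from \eqref{point4}; this disposes of $\maxnorm{d_\N}$ and of all nonbasic components at once.

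For the basic block the starting point is the identity $Ad = Ax - A\tilde x = Ax - b$, valid because $A\tilde x = b$ by construction of $\tilde x$; combined with \eqref{point1} it gives $\maxnorm{Ad}\leq\epsilon^k$. I would then split $Ad = B d_\B + A_{\cdot\N} d_\N$ with $B := A_{\cdot\B}$ the (nonsingular) basis matrix, solve for $d_\B = B^{-1}\bigl((Ax-b) - A_{\cdot\N} d_\N\bigr)$, and use compatibility of the maximum and row-sum norms together with the two bounds already obtained:
\[
  \maxnorm{d_\B} \leq \maxnorm{B^{-1}}\bigl(\maxnorm{Ax-b} + \maxnorm{A_{\cdot\N}}\maxnorm{d_\N}\bigr) \leq \maxnorm{B^{-1}}\bigl(1 + \maxnorm{A_{\cdot\N}}\bigr)\epsilon^k .
\]

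It remains to convert the matrix norms into a bound phrased in $\size{A}$. By Lemma~\ref{lem:intro:cramer}, $\size{B^{-1}}\leq 4m^2\size{B}$, so Lemma~\ref{lem:intro:normbound} yields $\maxnorm{B^{-1}}\leq 2^{4m^2\size{B}}$; similarly $\maxnorm{A_{\cdot\N}}\leq 2^{\size{A_{\cdot\N}}}$, hence $1 + \maxnorm{A_{\cdot\N}}\leq 2^{\size{A_{\cdot\N}}+1}$. Since the columns of $A$ partition into $\B$ and $\N$, $\size{B}+\size{A_{\cdot\N}} = \size{A}$, and because $m\geq 1$ we get $4m^2\size{B}+\size{A_{\cdot\N}} \leq 4m^2\bigl(\size{B}+\size{A_{\cdot\N}}\bigr) = 4m^2\size{A}$. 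Plugging in gives $\maxnorm{d_\B}\leq 2^{4m^2\size{A}+1}\epsilon^k$, and together with $\maxnorm{d_\N}\leq\epsilon^k\leq 2^{4m^2\size{A}+1}\epsilon^k$ this proves the claim.

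There is no genuine difficulty here; the points to watch are bookkeeping. One should note that $\B_k$ is a bona fide basis (part of the standing hypothesis on the sequence $(x_k,y_k,\B_k)$), so that $B^{-1}$ exists; in the degenerate case $\N=\emptyset$ the term $A_{\cdot\N}d_\N$ simply drops out and the argument still goes through; and one must keep the exponent arithmetic tight enough to land on the constant $2^{4m^2\size{A}+1}$, which is precisely what the slack inequalities $\size{A_{\cdot\N}}\leq 4m^2\size{A_{\cdot\N}}$ and $1+2^a\leq 2^{a+1}$ (for $a\geq 0$) provide. Note also that only \eqref{point1} and \eqref{point4} are needed for this claim; the dual hypotheses \eqref{point3} and \eqref{point5} enter only in the companion bound on $\maxnorm{y - \tilde y}$.
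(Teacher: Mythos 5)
Your proof is correct and follows essentially the same approach as the paper: the same split into nonbasic and basic components, the same use of the basic solution's identity $A\tilde x = b$ to get $\maxnorm{d_\B}\leq\maxnorm{B^{-1}}(1+\maxnorm{N})\epsilon^k$, and the same appeal to Lemmas~\ref{lem:intro:cramer} and~\ref{lem:intro:normbound} with the estimate $4m^2\size{B}+\size{N}\leq 4m^2\size{A}$. The only cosmetic difference is that you solve for $d_\B$ directly from $Ad=Ax-b$ rather than expanding $Bx_\B$ and $B\tilde x_\B$ separately, which is a slightly more streamlined presentation of the same algebra.
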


  \noindent
  For the nonbasic variables we have
  \begin{align*}
    \maxnorm{x_{\N} - \tilde x_{\N}}
    = \maxnorm{x_{\N} - \lb_{\N}}
    \stackrel{\eqref{point4}}{\leq} \epsilon^k.
  \end{align*}
  For the basic variables we have
 \begin{align*}
   \maxnorm{x_{\B} - \tilde x_{\B}}
   &= \maxnorm{B^{-1} (B x_{\B} - B \tilde x_{\B})}\\
   &\leq \maxnorm{B^{-1}} \maxnorm{B x_{\B} - B \tilde x_{\B}}\\
   &= \maxnorm{B^{-1}} \maxnorm{\underbrace{B x_{\B}}_{\clap{\scriptsize $= Ax - N x_\N$}} - (b - N \lb_{\N})}\\
   &= \maxnorm{B^{-1}} \maxnorm{A x - b - (Nx_\N - N\lb_{\N})}\\
   &\leq \maxnorm{B^{-1}} \big( \maxnorm{A x - b} + \maxnorm{N (x_{\N} - \lb_{\N})} \big)\\
   &\leq \maxnorm{B^{-1}} \big( \underbrace{\maxnorm{A x - b}}_{\leq \epsilon^k \text{ by \eqref{point1}}} + \maxnorm{N} \underbrace{\maxnorm{x_{\N} - \lb_{\N}}}_{\leq \epsilon^k \text{ by \eqref{point4}}} \big)\\
   &\leq \maxnorm{B^{-1}} \big( \maxnorm{N} + 1 \big) \epsilon^k.
 \end{align*}
  By Lemma~\ref{lem:intro:normbound} and Lemma~\ref{lem:intro:cramer},
  \begin{align*}
    \maxnorm{B^{-1}} & \leq 2^{\size{B^{-1}}} \leq 2^{4m^2\size{B}},    \intertext{and}
    \maxnorm{N} + 1 &\leq 2^{\size{N}} + 1 \leq 2^{\size{N} + 1},
    \intertext{hence}
    \maxnorm{B^{-1}} \big( \maxnorm{N} + 1 \big) &\leq 2^{4m^2\size{B}} \cdot 2^{\size{N} + 1}
    \leq 2^{4m^2\size{A} + 1},
  \end{align*}
  proving the claim.
    
  \pagebreak

  \begin{claim}{Claim 2}
    At iteration~$k = 1,2,\ldots$, $\tilde x_i - \lb_i \geq
    -2^{4m^2\size{A} + 2} \epsilon^k$ for all $i\in\{1,\ldots,n\}$.
  \end{claim}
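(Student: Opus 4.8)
The plan is to obtain the bound by combining Claim~1 with the primal-violation estimate~\eqref{point2}, which is already available for the numeric iterates. First I would note that for every index~$i\in\{1,\ldots,n\}$ one can write the telescoping decomposition
\[
  \tilde x_i - \lb_i = (\tilde x_i - x_i) + (x_i - \lb_i).
\]
The second summand is bounded below by $-\epsilon^k$ directly from~\eqref{point2}. The first summand is bounded in absolute value by $\maxnorm{x - \tilde x}$, which by Claim~1 is at most $2^{4m^2\size{A} + 1}\epsilon^k$. Hence
\[
  \tilde x_i - \lb_i \geq -2^{4m^2\size{A} + 1}\epsilon^k - \epsilon^k = -\big(2^{4m^2\size{A} + 1} + 1\big)\epsilon^k.
\]

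The remaining step is the routine constant consolidation: since $m\geq 1$ and $\size{A}\geq 1$, we have $1 \leq 2^{4m^2\size{A} + 1}$, so $2^{4m^2\size{A} + 1} + 1 \leq 2\cdot 2^{4m^2\size{A} + 1} = 2^{4m^2\size{A} + 2}$, which yields exactly the claimed inequality $\tilde x_i - \lb_i \geq -2^{4m^2\size{A} + 2}\epsilon^k$ for all $i$. Note that this argument is uniform over $i$, so there is no need to treat basic and nonbasic indices separately; for nonbasic~$i\in\N$ one in fact has $\tilde x_i - \lb_i = 0$ by construction, which trivially satisfies the bound, and for basic~$i\in\B$ the estimate above does the work.

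I do not anticipate a genuine obstacle here: the only content is the triangle inequality together with Claim~1 and~\eqref{point2}, and the slight inflation of the constant from $2^{4m^2\size{A}+1}$ to $2^{4m^2\size{A}+2}$ is there precisely to absorb the extra $\epsilon^k$ coming from the primal violation of the numeric iterate. The one thing to be careful about is making sure Claim~1 is invoked with the same iteration index~$k$ and the same basis~$\B$ (hence the same $\tilde x$) as in the statement of Claim~2, which is consistent since both are stated under the fixed-$k$ convention adopted at the start of the proof of Theorem~\ref{the:lpex:optconv}.
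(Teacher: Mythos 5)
Your proof is correct and matches the paper's argument essentially verbatim: the same decomposition $\tilde x_i - \lb_i = (\tilde x_i - x_i) + (x_i - \lb_i)$, the same appeal to Claim~1 and \eqref{point2}, and the same absorption of the extra $\epsilon^k$ into the constant by doubling the exponent. The added remarks about nonbasic indices vanishing by construction and the uniformity over $i$ are accurate but not needed beyond what the paper already does.
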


  \noindent
  This follows from
  \begin{align*}
    \tilde x_i - \lb_i
    &= \tilde x_i - x_i + x_i - \lb_i\\
    &\geq -\maxnorm{\tilde x_i - x_i} + x_i - \lb_i\\
    &\geq -2^{4m^2\size{A} + 1} \epsilon^k - \epsilon^k
    \geq -2^{4m^2\size{A} + 2} \epsilon^k.
  \end{align*}

  For dual feasibility, we first show that the dual solutions~$y$
  and~$\tilde y$ converge towards each other.

  \begin{claim}{Claim 3}
    At iteration $k = 1,2,\ldots$, $\maxnorm{y - \tilde y} \leq 2^{4m^2\size{B}}
    \epsilon^k$.
  \end{claim}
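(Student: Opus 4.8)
The plan is to mirror the argument used for Claim~1, but working with the transposed basis system~\eqref{equ:lpex:dualrowbasis} that determines the dual vector rather than the primal system~\eqref{equ:lpex:primalrowbasis}. The first observation is that $\tilde B^\T$ is block lower-triangular, $\tilde B^\T = \left(\begin{smallmatrix} B^\T & 0 \\ N^\T & \unitmatrix_{n-m} \end{smallmatrix}\right)$, so the top block of~\eqref{equ:lpex:dualrowbasis} reads exactly $B^\T \tilde y = c_\B$; that is, $A_{\cdot i}^\T \tilde y = c_i$ for every $i\in\B$, while the dual slacks $\tilde z$ decouple and play no role in pinning down $\tilde y$. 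Consequently, for each basic index $i$ the quantity $A_{\cdot i}^\T (y - \tilde y)$ equals $(c_i - A_{\cdot i}^\T \tilde y) - (c_i - y^\T A_{\cdot i}) = -(c_i - y^\T A_{\cdot i})$, whose absolute value is at most $\epsilon^k$ by assumption~\eqref{point5}. Collecting these $m$ inequalities yields $\maxnorm{B^\T (y - \tilde y)} \leq \epsilon^k$.

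Next I would solve back for $y - \tilde y$ itself, using $(B^\T)^{-1} = (B^{-1})^\T$ and compatibility of the row-sum norm with the maximum norm:
\[
  \maxnorm{y - \tilde y}
  = \maxnorm{(B^{-1})^\T B^\T (y - \tilde y)}
  \leq \maxnorm{(B^{-1})^\T}\, \maxnorm{B^\T (y - \tilde y)}
  \leq \maxnorm{(B^{-1})^\T}\, \epsilon^k.
\]
Since transposition does not change the encoding length of any entry, $\size{(B^{-1})^\T} = \size{B^{-1}}$, and Lemma~\ref{lem:intro:normbound} combined with~\eqref{lem:intro:inv} of Lemma~\ref{lem:intro:cramer} (applied to the $m\times m$ matrix $B$) gives $\maxnorm{(B^{-1})^\T} \leq 2^{\size{B^{-1}}} \leq 2^{4m^2\size{B}}$. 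Substituting into the display proves $\maxnorm{y - \tilde y} \leq 2^{4m^2\size{B}}\epsilon^k$, which is the claim.

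There is no real obstacle here beyond a little bookkeeping; the one point that needs care is recognizing that it is the \emph{basic} rows of~\eqref{equ:lpex:dualrowbasis} that determine $\tilde y$, so that~\eqref{point5} (an assumption about basic indices) is the correct hypothesis to invoke, not~\eqref{point3}. After Claim~3, the remainder of the proof of Theorem~\ref{the:lpex:optconv} will combine Claims~2 and~3 into lower bounds on the primal bound violations and the dual slack violations of the exact basic pair $\tilde x_k,\tilde y_k$ of the form $-2^{\order(m^2\size{A})}\epsilon^k$, then invoke Lemma~\ref{lem:lpex:basisviolation} to conclude that these violations fall strictly below the respective nonzero thresholds once $k$ exceeds some $K \in \order((m^2\size{A} + \size{b,\lb,c} + n^2)/\log(1/\epsilon))$, hence must be exactly zero from then on, i.e., $\B_k$ is optimal.
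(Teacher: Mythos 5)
Your proof is correct and follows essentially the same route as the paper: bound $\maxnorm{B^\T(y-\tilde y)}$ by $\epsilon^k$ via~\eqref{point5}, then multiply by $\maxnorm{(B^\T)^{-1}}$ and control that via Lemmas~\ref{lem:intro:normbound} and~\ref{lem:intro:cramer}. Your added remark that the block lower-triangular structure of $\tilde B^\T$ makes $B^\T\tilde y = c_\B$ the defining system for $\tilde y$ is just a more explicit rendering of what the paper reads off directly from~\eqref{equ:lpex:dualrowbasis}.
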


  \noindent
  This follows from
  \begin{align*}
    \maxnorm{y - \tilde y}
    &= \maxnorm{(B^\T)^{-1} B^\T (y - \tilde y)}
    \leq \maxnorm{(B^\T)^{-1}} \maxnorm{B^\T (y - \tilde y)}\\
    &= \maxnorm{(B^\T)^{-1}} \underbrace{\maxnorm{B^\T y - c_{\B}}}_{= \max\{ \abs{c_i - y^\T A_{\cdot i}} : i \in \B \}}
    \stackrel{\eqref{point5}}{\leq} \maxnorm{(B^\T)^{-1}} \,\epsilon^k
  \end{align*}
  and
 \begin{align*}
   \maxnorm{(B^\T)^{-1}}
   &\leq 2^{\size{(B^\T)^{-1}}} \leq 2^{4m^2\size{B^\T}} \leq 2^{4m^2\size{B}}
 \end{align*}
  using Lemma~\ref{lem:intro:normbound} and Lemma~\ref{lem:intro:cramer}.

  \begin{claim}{Claim 4}
    At iteration $k = 1,2,\ldots,$, $c_i - \tilde y^\T A_{\cdot i} \geq -2^{4m^2\size{A} + 1} \epsilon^k$ for all $i\in\{1,\ldots,n\}$.
  \end{claim}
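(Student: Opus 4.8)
The plan is to establish Claim~4 by the same basic/nonbasic case split already used for the primal quantities in Claims~1 and~2, now transposed to the dual side. For $i\in\B$ nothing is required: the defining system~\eqref{equ:lpex:dualrowbasis} forces $B^\T\tilde y=c_\B$, so the reduced cost $c_i-\tilde y^\T A_{\cdot i}$ equals $0$, which trivially satisfies the asserted lower bound since $2^{4m^2\size A+1}\ge 1$. All of the content is in the nonbasic block.

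For $i\in\N$ I would begin from $c_i-\tilde y^\T A_{\cdot i}=(c_i-y^\T A_{\cdot i})+(y-\tilde y)^\T A_{\cdot i}$. The first term is at least $-\epsilon^k$ by~\eqref{point3}. For the second, H\"older's inequality gives $\abs{(y-\tilde y)^\T A_{\cdot i}}\le\maxnorm{y-\tilde y}\cdot\norm{A_{\cdot i}}_1$; Claim~3 supplies $\maxnorm{y-\tilde y}\le 2^{4m^2\size B}\epsilon^k$, and Lemma~\ref{lem:intro:normbound} applied to the $1\times m$ matrix $A_{\cdot i}^\T$, whose row-sum norm is precisely $\norm{A_{\cdot i}}_1$, gives $\norm{A_{\cdot i}}_1\le 2^{\size{A_{\cdot i}}}\le 2^{\size N}$ because $A_{\cdot i}$ is a column of $N$. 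Hence $\abs{(y-\tilde y)^\T A_{\cdot i}}\le 2^{4m^2\size B+\size N}\epsilon^k$.

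The one step that needs care---and the only thing I would call an obstacle, though it is bookkeeping rather than a genuine difficulty---is folding $4m^2\size B+\size N$ back into $4m^2\size A$. Since $\B$ and $\N$ partition the columns we have $\size A=\size B+\size N$, so $4m^2\size B+\size N=\size A+(4m^2-1)\size B\le 4m^2\size A$ using $\size B\le\size A$; this is exactly the cancellation already used at the close of Claim~1 for $\maxnorm{B^{-1}}(\maxnorm N+1)$. Together with the leftover $\epsilon^k$ and $1\le 2^{4m^2\size A}$ this yields $c_i-\tilde y^\T A_{\cdot i}\ge -(1+2^{4m^2\size A})\epsilon^k\ge -2^{4m^2\size A+1}\epsilon^k$, proving Claim~4.

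Finally I would use Claims~2 and~4 to close Theorem~\ref{the:lpex:optconv}. The exact basic solution $\tilde x_k,\tilde y_k$ is complementary slack by construction, has maximum primal violation at most $2^{4m^2\size A+2}\epsilon^k$ and maximum dual violation at most $2^{4m^2\size A+1}\epsilon^k$. By Lemma~\ref{lem:lpex:basisviolation} a basic solution that is not primal feasible has primal violation at least $2^{-(4\size{A,b}+5\size\lb+2n^2+4n)}$, and one that is not dual feasible has dual violation at least $2^{-(4\size{A,c}+2n^2+4n)}$. Choosing $K$ so that both $2^{4m^2\size A+2}\epsilon^k$ and $2^{4m^2\size A+1}\epsilon^k$ drop strictly below the corresponding thresholds---which holds once $k$ exceeds $(4m^2\size A+4\size{A,b}+5\size\lb+2n^2+4n+2)/\log(1/\epsilon)$ and the analogous dual expression---forces $\tilde x_k,\tilde y_k$ to be primal and dual feasible, hence optimal, for all $k\ge K$, so $\B_k$ is an optimal basis. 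Expanding the exponents gives $K\in\order((m^2\size A+\size{b,\lb,c}+n^2)/\log(1/\epsilon))$, as stated; the only place to be vigilant here is checking that this single $K$ indeed dominates both of the distinct infeasibility lower bounds from Lemma~\ref{lem:lpex:basisviolation}.
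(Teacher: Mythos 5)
Your proof is correct and follows essentially the same route as the paper: the same split into basic ($i\in\B$, where the reduced cost vanishes by construction of~\eqref{equ:lpex:dualrowbasis}) and nonbasic ($i\in\N$) columns, the same decomposition $c_i-\tilde y^\T A_{\cdot i}=(c_i-y^\T A_{\cdot i})+(y-\tilde y)^\T A_{\cdot i}$, the same invocation of~\eqref{point3}, Claim~3, and Lemma~\ref{lem:intro:normbound}, and the same exponent bookkeeping to absorb $4m^2\size{B}+\size{N}$ into $4m^2\size{A}$. One small place where you are slightly more careful than the paper's wording: you pair $\maxnorm{y-\tilde y}$ with the correct dual norm $\norm{A_{\cdot i}}_1$ explicitly via H\"older and then pass directly to $2^{\size{N}}$ through Lemma~\ref{lem:intro:normbound}, whereas the paper interposes the bound $\maxnorm{N}$ (a max row sum, which does not formally dominate a column sum, though the downstream bound $2^{\size{N}}$ is still valid); your formulation sidesteps that wrinkle. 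Your closing application of Claims~2 and~4 together with Lemma~\ref{lem:lpex:basisviolation} to obtain the threshold~$K$ also matches the paper.
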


  \noindent
  For the basic variables, $c_i - \tilde y^\T A_{\cdot i} = 0$.   For~$i\in\N$,
  \begin{align*}
    \abs{(y - \tilde y)^\T A_{\cdot i}}
    &\leq \maxnorm{A_{\cdot i}} \maxnorm{y - \tilde y}\\
    &\leq \maxnorm{N}\, 2^{4m^2\size{B}} \epsilon^k\\
    &\leq 2^{\size{N}} 2^{4m^2\size{B}} \epsilon^k\\
    &\leq 2^{4m^2\size{B} + \size{N}} \epsilon^k\\
    &= 2^{4m^2\size{A}} \epsilon^k.
  \end{align*}
  This proves the claim via
  \begin{align*}
    c_i - \tilde y^\T A_{\cdot i}
    &= c_i - y^\T A_{\cdot i} + (y - \tilde y)^\T A_{\cdot i}\\
    &\geq c_i - y^\T A_{\cdot i} - \abs{(y - \tilde y)^\T A_{\cdot i}}\\
    &\geq -\epsilon^k - 2^{4m^2\size{A}} \epsilon^k\\
    &\geq -2^{4m^2\size{A} + 1} \epsilon^k.
  \end{align*}

  From Claim~2 and 4, $\tilde x$ and $\tilde y$ violate primal and dual
  feasibility by at most~$2^{4m^2\size{A} + 2} \epsilon^k$ and~$2^{4m^2\size{A}
    + 1} \epsilon^k$, respectively.
  These values drop below the thresholds from
  Theorem~\ref{lem:lpex:basisviolation} as soon as
  \begin{align*}
   2^{4m^2\size{A} + 2} \epsilon^k &< 1 / 2^{4\size{A,b} + 5\size{\lb} + 2n^2 + 4n}\\
   \Leftrightarrow \epsilon^k &< 1 / 2^{4m^2\size{A} + 2 + 4\size{A,b} + 5\size{\lb} + 2n^2 + 4n}\\
   \Leftrightarrow k &> \frac{4m^2\size{A} + 4\size{A,b} + 5\size{\lb} + 2n^2 + 4n + 2}{\log(1/\epsilon)} \enifed K_P
   \intertext{and}
   2^{4m^2\size{A} + 1} \epsilon^k &< 1 / 2^{4\size{A, c} + 2n^2 + 4n}\\
   \Leftrightarrow \epsilon^k &< 1 / 2^{4m^2\size{A} + 1 + 4\size{A, c} + 2n^2 + 4n}\\
   \Leftrightarrow k &> \frac{4m^2\size{A} + 4\size{A, c} + 2n^2 + 4n + 1}{\log(1/\epsilon)} \enifed K_D.
  \end{align*}
  From Theorem~\ref{lem:lpex:basisviolation}, the solution $\tilde x,\tilde
  y$ must be primal and dual feasible for $k \geq K := \max\{ K_P, K_D\} + 1$.
  Since the solution is basic, $\tilde x$ and $\tilde y$ also satisfy
  complementary slackness, and hence they are optimal.
  The resulting threshold $K$ has the order claimed.
  \qed
 \section{Computational Experiments}
\label{sec:lpex:experiments}

Despite the theoretical analysis it remains an open question which of the proposed methods
for solving linear programs exactly will perform best empirically, iterative
refinement with basis verification or with rational reconstruction.
We implemented both algorithms in the simplex-based LP~solver
\soplex~\cite{SoPlexWeb} in order to analyze and compare their computational
performance on a large collection of LP instances from publicly available test
sets.
In particular, we aim to answer the following questions:

\begin{itemize}
\item How many instances can be solved by each algorithm and how do they compare in
running time?
\item How much of the solving time is consumed by rational factorization and rational
reconstruction, and how often are these routines called?
\item How many refinements are needed until reconstruction succeeds, and are the
reconstructed solutions typically basic?
\end{itemize}
In addition, we compared both methods against the state-of-the-art solver
\qsoptex, which is based on incremental precision
boosting~\cite{QsoptexWeb,ApplegateCookDashEspinoza2007}.

\subsection{Implementation and Experimental Setup}

Both methods---basis verification and rational reconstruction---were implemented as
extensions to the existing LP iterative refinement procedure of SoPlex, which is
detailed in~\cite{GleixnerSteffyWolter2016}.
In the following, these extensions are denoted by~\soplexfac and~\soplexrec,
respectively.

The exact solution of the primal and dual basis systems relies on a rational
LU~factorization and two triangular solves for the standard, column-wise basis
matrix containing slack columns for basic rows.
The implementation of the rational solves is an adjusted version of the
floating-point LU code of SoPlex, removing parts specific to floating-point
operations.
The stalling threshold~$L$ for calling rational factorization is set to $2$,
i.e., factorization will only be called after two refinement steps have not
updated the basis information.

The rational reconstruction routine is an adaptation of the code used
in~\cite{Steffy2011}.
The newly introduced error correction factor~$\beta$ was set to~1.1.
Furthermore, since nonbasic variables are held fixed at one of their bounds, the
corresponding entries of the primal vector can be skipped during reconstruction.
The rational reconstruction frequency~$f$ is set to $1.2$, i.e., after a
failed attempt at reconstructing an optimal solution, reconstruction is paused
until 20\% more refinement steps have been performed.
We also employ the DLCM method described in \cite{CookSteffy2011} (see 
also~\cite{ChenStorjohann2005}) for accelerating the reconstruction of the primal and dual
solution vectors.

As test bed we use a set of 1,202~primal and dual feasible LPs collected from
several publicly available sources: \netlib~\cite{netlib}, Hans Mittelmann's
benchmark instances~\cite{MittelmannLp}, Csaba M\'esz\'aros's LP
collection~\cite{MeszarosLp}, and the LP relaxations of the \coral and \miplib
mixed-integer linear programming libraries~\cite{CoralWeb,MiplibWeb}.  For
details regarding the compilation we refer to the electronic supplement
of~\cite{GleixnerSteffyWolter2016}.

The experiments were conducted on a cluster of 64-bit Intel Xeon X5672~CPUs at
3.2\,GHz with 48\,GB main memory, simultaneously running at most one job per node.
\soplex was compiled with \gcc~4.8.2 and linked to the external libraries
\gmp~5.1.3 and \eglib~2.6.20.
\qsoptex was run in version~2.5.10.
A time limit of two hours per instance was set for each \soplex and \qsoptex
run.

\subsection{Results}

For the evaluation of the computational experiments we collected the following statistics:
the total number of simplex iterations and running
times for each solver; for \qsoptex the maximum floating-point precision used;
for the \soplex runs the number of refinement steps and the number and execution
times for basis verification and rational reconstruction, respectively.
For the solutions returned by \soplexfac and \soplexrec, we additionally
computed the least common multiple of the denominators of their nonzero entries
and report their order of magnitude, as an indicator for how ``complicated'' the
representation of the exact solution is.
The electronic supplement provides these statistics for each instance of the
test set.
Tables~\ref{tab:lpex:vsqsoptex} and~\ref{tab:lpex:facrec-aggr} below report an
aggregated summary of these results.
In the following, we discuss the main observations.

\paragraph{Overall Comparison}

None of the solvers dominates the others meaning that for each of \qsoptex,
\soplexfac, and \soplexrec there exist instances that can be solved only by this
one solver.
Overall, however, the iterative refinement-based methods are able to solve more
instances than \qsoptex, and \soplexfac exhibits significantly shorter running
times than the other two methods.
Of the 1,202~instances, 1,158 are solved by all three within the available time
and memory resources.
\qsoptex solves 1,163~instances, \soplexrec solves 1,189~instances, and \soplexfac solves the largest number of instances: 1,191.
Regarding running time, \qsoptex is fastest 324~times, \soplexrec 569~times, and
\soplexfac is fastest for 702~instances.\footnote{In order to account fairly for
  small arbitrary deviations in time measurements, an algorithm is considered
  ``fastest'' if it solves an instance in no more than 5\% of the time taken by
  the fastest method.  Hence, the numbers do not add up to the total of
  1,202~instances. Furthermore, note that the same picture holds also when
  excluding easy instances that took less than two seconds by all solvers:
  \soplexfac wins more than twice as often as \qsoptex.}

For 32 of the 44~instances not solved by \qsoptex, this is due to the time limit.
On the other 12~instances, it cannot allocate enough memory on the 48\,GB
machine.
Eight times this occurs during or after precision boosts and points to the
disadvantage that keeping and solving extended-precision LPs may not only be
time-consuming, but also require excessive memory.
By contrast, the iterative refinement-based methods work with a more
memory-efficient double-precision floating-point rounding of the LP and never
reach the memory limit.
However, \soplexrec and \soplexfac could not solve seven instances because of
insufficient performance of the underlying floating-point oracle.\footnote{  On three instances,
    floating-point \soplex could not solve the first refinement LP within the time limit.
    For three further instances, the initial floating-point solve incorrectly claimed
  unboundedness and for one instance it incorrectly claimed infeasibility;
      in all of these cases, the incorrect claims were rejected successfully using feasibility and
  unboundedness tests as described in~\cite{GleixnerSteffyWolter2016}, but after
  starting to refine the original LP again, floating-point \soplex failed to
  return an approximately optimal solution even when trying to run with different floating-point settings.
}

Finally, for the 492~instances that could be solved by all three algorithms, but
were sufficiently nontrivial such that one of the solvers took at least two
seconds, Table~\ref{tab:lpex:vsqsoptex} compares average running times and
number of simplex iterations.
In addition to all 492~instances, the lines starting with 64-bit, 128-bit, and
192-bit filter for the subsets of instances corresponding to the final precision
level used by \qsoptex.
It can be seen that \soplexfac outperforms the other two algorithms.
Overall, it is a factor of 1.85~faster than \qsoptex and even 2.85~times faster
than \soplexrec.
On the instances where \qsoptex found the optimal solution after the
double-precision solve (line 64-bit), \soplexfac is 30\% faster
although it uses about 40\% more simplex iterations than \qsoptex.
Not surprisingly, when \qsoptex has to boost the working precision of the
floating-point solver (lines 128-bit and 192-bit), the results become even more
pronounced, with \soplexfac being over three times faster than \qsoptex.

\begin{table}
  \caption{Aggregate comparison of solvers \qsoptex and exact \soplex with basis
    verification (\soplexfac) and rational reconstruction (\soplexrec) on
    instances that could be solved by all and where one solver took at least
    2~seconds.
    Columns~\itercol\xspace and \timecol\xspace report shifted geometric means of
    simplex iterations and solving times, using a shift of 2~seconds and
    100~simplex iterations, respectively.
    Column~\speedupcol\xspace reports the ratio between the mean solve times of \soplex and~\qsoptex.}
    \label{tab:lpex:vsqsoptex}
  \setlength{\tabcolsep}{1pt}
  \medskip
  
  \scriptsize
  \begin{tabular*}{\textwidth}{@{\extracolsep{\fill}}lrrrrrrrrr}
                        \toprule
    &
    & \multicolumn{2}{c}{\qsoptex}
    & \multicolumn{3}{c}{\soplexfac}
    & \multicolumn{3}{c}{\soplexrec}\\
    \cmidrule(l){3-4}\cmidrule(l){5-7}\cmidrule(l){8-10}
    \qsoptex \preccol & \numinstcol
    & \itercol & \timecol
    & \itercol & \timecol & \speedupcol
    & \itercol & \timecol & \speedupcol\\
    \midrule

  any &   492 &     8025.7 &    15.6 &     9740.6 &     8.5 &    0.54 &     9740.6 &    24.2 &    1.55\\
   64-bit &   324 &     8368.3 &    16.1 &    11683.7 &    11.3 &     0.70 &    11683.7 &    14.8 &    0.92\\
  128-bit &   163 &     7217.1 &    13.9 &     6757.2 &     4.3 &    0.31 &     6757.2 &    58.5 &    4.21\\
  192-bit &     5 &    16950.9 &    72.5 &    10763.4 &    20.5 &    0.28 &    10763.4 &   134.6 &    1.86\\
      \bottomrule
  \end{tabular*}
\end{table}
 
The remaining analysis looks at the results of the iterative refinement-based
methods in more detail.
Table~\ref{tab:lpex:facrec-aggr} provides a summary of the statistics in the
electronic supplement for all 1,186~instances that are solved by both \soplexrec
and \soplexfac.
The lines starting with \timebracketgeq{t} filter for the subsets of
increasingly hard instances for which at least one method took $t=1,10,100$
seconds.

\paragraph{Rational Reconstruction}

The results largely confirm the predictions of
Theorem~\ref{the:diophantine},
which expects an approximate solution with error about~$10^{-2\,\text{\dlcmcol}}$ or less.
Here \dlcmcol~is the $\log_{10}$ of the least common multiple of the
denominators in the solution vector as reported in the electronic supplement.
Indeed, the \dlcmcol~value mostly correlates with the number of refinement
rounds, though several instances exist where reconstruction succeeds with even
fewer refinements than predicted.
As can be seen from column ``\rectimecol'', the strategy of calling rational
reconstruction at a geometric frequency succeeds in keeping reconstruction time
low also as the number of refinements increases.

The 5~instances that could be solved by \soplexfac, but not by \soplexrec, show
large \dlcmcol~value.
This helps to explain the time outs and points to a potential bottleneck of
\soplexrec.
The number of refinements that could be performed within the time limit simply
did not suffice to produce an approximate solution of sufficiently high
accuracy.
Finally, the reconstructed solution was almost always basic and showed identical
\dlcmcol~value as the solution of \soplexfac.
For 7~instances, rational reconstruction computed a non-basic solution.

\begin{table}
  \caption{Computational results for iterative refinement with basis
    verification (\soplexfac) and rational reconstruction (\soplexrec).
    Columns~\irrefcol, \faccol, \reccol\xspace contain arithmetic means of the number of
    refinements, basis verifications, and reconstruction attempts, respectively.
    Columns~\timecol, \factimecol, \rectimecol\xspace report shifted geometric
    mean times for the total solving process, the basis verifications, and
    rational reconstruction routines, respectively, with a shift of 2~seconds.
    Column~\speedupcol\xspace reports the ratio between the mean solve times of \soplexrec and~\soplexfac.}
  \label{tab:lpex:facrec-aggr}
  \setlength{\tabcolsep}{1pt}
  \medskip

    \scriptsize
  \begin{tabular*}{\textwidth}{@{\extracolsep{\fill}}lrrrrrrrrrr}
                                    \toprule
    &
    & \multicolumn{4}{c}{\soplexfac}
    & \multicolumn{5}{c}{\soplexrec}\\
    \cmidrule(l){3-6}\cmidrule(l){7-10}
    Test set & \numinstcol
    & \hspace*{1.5em}\irrefcol & \faccol & \factimecol & \timecol
    & \hspace*{2.3em}\irrefcol & \reccol & \rectimecol & \timecol
    & \speedupcol\\
    \midrule

all                            &  1186 &      2.1 &     0.95 &     0.21 &      2.8 &     68.3 &     6.74 &     1.26 &      5.2 &     1.82\\
\timebracketgeq{1}             &   591 &      2.3 &     0.98 &     0.43 &      8.8 &    135.1 &    11.04 &     3.28 &     21.8 &     2.47\\
\timebracketgeq{10}            &   311 &      2.4 &     0.99 &     0.83 &     24.1 &    241.6 &    15.47 &     9.15 &    101.9 &     4.22\\
\timebracketgeq{100}           &   161 &      2.7 &     0.98 &     1.40 &     42.8 &    384.9 &    19.70 &    22.95 &    340.3 &     7.95\\
             \bottomrule
  \end{tabular*}
  \end{table}

\paragraph{Basis Verification}

Compared to \soplexrec, the number of refinements for \soplexfac is very small,
because the final, optimal basis is almost always reached by the second round.
This confirms earlier results of~\cite{GleixnerSteffyWolter2016}.
Accordingly, for most LPs, \soplexfac performs exactly one rational factorization
(1,123 out of 1,191 solved); for 7~instances two factorizations.

Notably, there are 61~instances where no factorization is necessary because the
approximate solution is exactly optimal.
This is explained by the fact that the numbers in the solution have small
denominator.
For 59~instances, the denominator is even one, i.e., the solution is integral.
As a result, the average number of factorizations (column ``\faccol'') of
Table~\ref{tab:lpex:facrec-aggr} is slightly below one.
This situation even occurs for LPs with longer running times, since the
simplicity of the solution is not necessarily correlated with short running
times of the floating-point simplex.

On average, the time for rational factorization and triangular solves (column
``\factimecol'') is small compared to the total solving time.
Also in absolute values, $\factimecol$ is small for the vast majority of
instances: for 901~instances it is below 0.1~seconds.
In combination with the small number of refinements needed to reach the optimal
basis, this helps to explain why \soplexfac is on average between 1.82~and
7.95~times faster than \soplexrec.
However, for 21~instances, $\factimecol$ exceeds 7~seconds and consumes more than
90\% of the running time.
On 3~of these instances, \soplexfac times out, while they can be solved by
\soplexrec.

 \section{Conclusion}
\label{sec:lpex:conclusion}

This paper developed and analyzed two new algorithms for exact linear programming
over the rational numbers.
A notion of limited-precision LP oracles was formalized, which closely resembles
modern floating-point simplex implementations.
The methods extend the iterative refinement scheme
of~\cite{GleixnerSteffyWolter2016} in conceptually different directions: basis
verification using rational linear systems solves and rational reconstruction
using the extended Euclidean algorithm.
Both are proven to converge to an optimal basic solution in oracle-polynomial time.

Computational experiments revealed that the rational factorization approach solved
slightly more instances within a time limit and was about 46\% faster on average.
However, several instances were identified that were solved much faster by rational
reconstruction; we also found that the reconstruction approach was slightly faster
for those LPs with very short running times.
This raises the question how to combine both techniques most efficiently into a
hybrid algorithm.
An immediate idea would be to perform a rational factorization only after rational
reconstruction has failed for a fixed number of refinements.
However, the critical instances on which reconstruction wins typically have
solutions with large denominators and require a high number of refinements.
Putting the factorization on hold in the meantime would incur a major slowdown
on the majority of instances.
Hence, currently the most promising hybridization seems to be a straightforward
parallelization:
whenever iterative refinement reaches a basis candidate that is assumed to be
optimal, a rational factorization can be performed in the background while
refinement and reconstruction is continued in the foreground.

Finally, we compared the iterative refinement based algorithms against the
current state-of-the-art approach, the incremental precision boosting procedure
implemented by the solver \qsoptex.
We found that \soplex (using the rational factorization strategy) is 1.85 to 3
times faster on our test set and solves more instances within given time and
memory restrictions.
However, the advantage of incremental precision boosting is its capability to
handle extremely ill-conditioned LPs by increasing the working precision of the
floating-point solver when necessary.
In order to harness the strengths of both approaches, incremental precision
boosting can be integrated into iterative refinement quite naturally:
whenever the underlying floating-point solver encounters numerical difficulties and
fails to return a satisfactory approximate solution, boost the precision of the
floating-point solver to the next level.
This would help to handle instances on which iterative refinement failed in our
experiments because \soplex's double-precision simplex broke down,
while for the vast majority of instances no precision boosts would
be necessary, retaining the significant performance benefits of iterative
refinement.
 \bigskip
\bigskip

\noindent\textbf{Acknowledgements}\medskip

\noindent
The authors would like to thank the anonymous reviewers for their detailed study
of the manuscript and their comments, which were of exceptionally high
quality.
\bigskip
\bigskip

\noindent\textbf{Online supplement}\medskip

\noindent
The online supplement is available with the \emph{Mathematical Programming}
publication at \url{http://dx.doi.org/10.1007/s10107-019-01444-6}.
Please cite this work always via the original publication at the above link.
\bigskip

\renewcommand{\refname}{\normalsize References}
\setlength{\bibsep}{0.25ex plus 0.3ex}
\bibliographystyle{abbrvnat}

\begin{small}
\bibliography{exlpir}

\end{small}

\end{document}